\documentclass[11pt]{article}
\usepackage[utf8]{inputenc}
\DeclareUnicodeCharacter{202F}{\,}
\usepackage{geometry} % (optional; keep if you plan to control margins here)

\usepackage{amsmath,amssymb,amsfonts,amsthm}
\numberwithin{equation}{section}

\usepackage{graphicx}
\usepackage{float}
\usepackage{pdflscape}
\usepackage{comment}
\usepackage{wrapfig}
\usepackage{subcaption, cite}
\usepackage[table]{xcolor}
\usepackage{array,tabularx,longtable,booktabs,multirow,makecell}

\renewcommand{\arraystretch}{1.5}    % row height scaling
\newcolumntype{s}{>{\columncolor{blue!10}}c}
\arrayrulecolor{gray!10}

\usepackage{enumerate}
\usepackage{enumitem}

\usepackage{authblk}
\usepackage{titlesec}
\usepackage{pdflscape}
\usepackage{adjustbox}
\usepackage{caption}
\usepackage{varioref}
\usepackage{hyperref}
\hypersetup{
	colorlinks=true,
	linkcolor=violet,
	citecolor=cyan,
	urlcolor=teal
}
\usepackage[sort&compress]{cleveref}
\crefname{figure}{figure}{figures}
\Crefname{figure}{Figure}{Figures}
\def\RR{\hbox{I\kern-.2em\hbox{R}}}
\newcommand{\myqed}{\hbox to 0pt{}\hfill$\rlap{$\sqcap$}\sqcup$\vspace{3mm}}

\newcommand{\estci}[3]{%
	\shortstack[c]{#1\\[-0.4mm]\scriptsize [#2,\;#3]}%
}
\usepackage[table]{xcolor}
\usepackage{booktabs}
\usepackage{tabularx}
\usepackage{array}
\usepackage{makecell}
\usepackage{threeparttable}
\usepackage{float}
\usepackage{rotating}
\definecolor{TableBlue}{RGB}{38,78,121}
\definecolor{HeaderFill}{RGB}{224,234,244}

\definecolor{SourceFill}{RGB}{248,226,226}
\definecolor{SourceText}{RGB}{145,35,35}

\definecolor{SinkFill}{RGB}{226,237,247}
\definecolor{SinkText}{RGB}{35,82,130}

\definecolor{ExporterFill}{RGB}{238,230,243}
\definecolor{ExporterText}{RGB}{102,56,120}

\definecolor{ImporterFill}{RGB}{228,241,233}
\definecolor{ImporterText}{RGB}{38,105,68}

\newcommand{\SourceCell}{%
	\cellcolor{SourceFill}\textcolor{SourceText}{\textbf{Source}}}

\newcommand{\SinkCell}{%
	\cellcolor{SinkFill}\textcolor{SinkText}{\textbf{Sink}}}

\newcommand{\ExporterCell}{%
	\cellcolor{ExporterFill}\textcolor{ExporterText}{\textbf{Exporter}}}

\newcommand{\ImporterCell}{%
	\cellcolor{ImporterFill}\textcolor{ImporterText}{\textbf{Importer}}}
\definecolor{ScenarioBlue}{RGB}{38,78,121}
\definecolor{SubheaderBlue}{RGB}{224,234,244}
\definecolor{AlternateRow}{RGB}{246,248,250}
\definecolor{BaselineFill}{RGB}{232,240,248}
\definecolor{ReductionGreen}{RGB}{35,110,70}
\definecolor{IncreaseRed}{RGB}{160,45,45}
\newtheorem{Th}{Theorem}

\graphicspath{{Figure/}}
\date{}

\begin{document}
%	\title{Spatial Heterogeneity, Vaccination, and Interdivisional Connectivity in the 2026 Measles Outbreak in Bangladesh: A Multipatch Modelling Study}
	\title{A Multipatch Model for Evaluating Interdivisional Connectivity and Vaccination Dynamics in the 2026 Bangladesh Measles Outbreak}
%	\title{Multipatch Dynamics and Spatial Control of the 2026 Measles Outbreak in Bangladesh}
%	\title{Multipatch Dynamics, Vaccination, and Spatial Control of the 2026 Measles Outbreak in Bangladesh}

	\author[1, 2]{\small Faizunnesa Khondaker\thanks{Email: faizunnesa@math.jnu.ac.bd}}
	\author[1]{\small Md. Kamrujjaman\thanks{Corresponding author   Email: kamrujjaman@du.ac.bd}}
	%	\author[2]{\small  Md. Shahidul Islam\thanks{Email: mshahid@du.ac.bd}}

	\affil[1]{\footnotesize Department of Mathematics, University of Dhaka, Dhaka 1000, Bangladesh}
	\affil[2]{\footnotesize Department of Mathematics, Jagannath University, Dhaka 1100, Bangladesh}
	
	\affil[ ]{\textbf{Corresponding author:} Md. Kamrujjaman, \href{mailto:kamrujjaman@du.ac.bd}{kamrujjaman@du.ac.bd}}
	\maketitle
	
	\vspace{-1.0cm}
	\noindent\rule{6.35in}{0.02in}\\

	\begin{abstract}
		The 2026 measles resurgence in Bangladesh motivates a spatial modelling framework that accounts for heterogeneous local transmission, vaccination activity, and interdivisional connectivity. We formulate a multipatch susceptible-vaccinated-infectious-recovered model for the eight administrative divisions of Bangladesh. Division-specific parameters are estimated from reported measles cases \cite{DGHS} and smoothed vaccination data using bounded nonlinear least squares, with uncertainty quantified through 1000 parametric bootstrap replicates. The fitted divisional systems are then connected through compartment-specific movement and cross-patch infection coefficients  adjusted for connectivity and the source division's relative reproduction number. The fitted model captures the main division-specific epidemic patterns. Estimated local reproduction numbers range from \(1.1386\) in Khulna (95\% CI:\(0.5703\)--\(1.3834\)) to \(4.3491\) in Dhaka (95\% CI: \(1.4449\)--\(5.6307\)), while Chattogram also exhibits high transmission potential (\(\mathcal{R}_0^i=4.2275\); 95\% CI: \(4.0019\)--\(4.3185\)). Baseline spatial coupling increases the network-wide epidemic peak from \(190.96\) to \(313.20\) infectious individuals and more than doubles cumulative infections from \(1086.42\) to \(2345.35\). Relative to baseline coupling, 50\% and 80\% restrictions reduce the epidemic peak by \(26.69\%\) and \(34.37\%\), and cumulative infections by \(25.07\%\) and \(41.48\%\), respectively. Conversely, a 50\% increase in spatial coupling raises the peak by \(30.14\%\) and cumulative infections by \(23.54\%\). Although local transmission remains dominant in every division, cross-patch infection accounts for \(28.83\%\) of new infections in Rajshahi at the baseline epidemic peak. Source-sink analysis identifies Dhaka as the dominant cross-patch source and Chattogram as a smaller secondary source. These findings suggest that, under the assumed connectivity structure, coordinated vaccination and local outbreak control should be complemented by targeted interventions along major interdivisional transmission pathways. 		
		\end{abstract}

	\noindent\textbf{Keywords:} measles; multipatch model; vaccination; human movement; cross-patch infection; reproduction number; Bangladesh.
	
	\noindent\rule{\textwidth}{0.4pt} % optional separator

		\section*{Highlights}
	
	\begin{itemize}[leftmargin=*, label=$\bullet$]
		\item Formulated a multipatch SVIR model for Bangladesh's eight divisions, calibrated via bounded nonlinear least squares on case and vaccination data with {1000} bootstrap replicates.
		\item Identified high spatial transmission heterogeneity, with local $\mathcal{R}_0^i$ ranging from {1.1386} ({95\%} CI: {0.5703}--{1.3834}) in Khulna to {4.3491} ({95\%} CI: {1.4449}--{5.6307}) in Dhaka.
		\item Demonstrated that baseline interdivisional coupling more than doubles cumulative national infections (from {1086.42} to {2345.35}) and elevates the peak infectious count to {313.20}.
		\item Showed that {50\%} and {80\%} travel restrictions reduce cumulative infections by {25.07\%} and {41.48\%}, whereas a {50\%} coupling increase raises cumulative cases by {23.54\%}.
		\item Uncovered significant cross-patch dynamics--accounting for {28.83\%} of peak infections in Rajshahi--and identified Dhaka as the primary interdivisional transmission source.
	\end{itemize}
	
	\clearpage
	\section{Introduction}
	
	Measles is an acute viral disease that can affect individuals of all ages and remains a leading cause of death among young children worldwide. It is exceptionally contagious and spreads through airborne particles and respiratory droplets. Symptoms generally appear 10--14 days after exposure, while infected individuals can transmit the virus from approximately four days before to four days after rash onset, allowing transmission before the disease is readily recognized. Although most patients recover, measles can cause pneumonia, severe diarrhoea, encephalitis, blindness, and death, particularly among young, malnourished, and unvaccinated children \cite{WHO2026Bangladesh}.
	
Despite the availability of a safe and effective vaccine, measles has resurged worlwide, including in high-income countries that had previously achieved elimination. WHO estimates that approximately 11 million infections and 95,000 measles related deaths occured globally in 2024. During the same year, only 84\% of children received the first measles-containing vaccine dose and 76\% received the second, both remaining well below the 95\% coverage required to interrupt transmission. Moreover, 59 countries reported large or disruptive outbreaks, demonstrating the consequences of persistent immunization gaps. Even where national coverage appears relatively high, aggregated estimates may conceal under-vaccinated communities in which susceptible individuals accumulate, permitting localized outbreaks and continued transmission \cite{WHO2025GlobalMeasles}. 
 Under the national Expanded Programme on Immunization in Bangladesh, children receive two measles--rubella vaccine doses at 9 and 15 months of age. However, MR1 coverage declined from 88.6\% in 2019 to 86.0\% in 2023, while MR2 coverage fell more sharply from 89.0\% to 80.7\%. These shortfalls are estimated to leave approximately 10 million children without MR1 protection and 20 million without MR2 protection \cite{Situational}. Together with missed or delayed vaccination and increasing vaccine hesitancy associated with misinformation, these accumulated immunity gaps have contributed to the recent resurgence of measles \cite{UNICEF2026MeaslesFAQ}. WHO reports that, on 4 April 2026, Bangladesh’s National IHR Focal Point notified the organization of a nationwide increase in measles cases affecting all eight divisions and 58 of the country’s 64 districts, with children younger than five years comprising \(79\%\) of the cases reported during the initial phase of the outbreak \cite{WHO2026Bangladesh}. From 15 March to 3 August, Bangladesh reports 131,050 suspected cases, 16,408 confirmed cases, 751 suspected deaths, and 96 confirmed deaths \cite{DGHS}. In response, an emergency campaign beginning on 5 April in 30 high-risk upazilas expands nationwide \cite{child}, targeting more than 17.8 million children aged 6--59 months and aiming for at least 95\% coverage \cite{UNICEF}.

The scale and geographical variation of this response highlight the need for quantitative frameworks to evaluate how vaccination, local transmission, and population connectivity shape outbreak dynamics. Mathematical modelling has therefore been widely applied across infectious diseases to investigate transmission thresholds, vaccination effects, spatial spread, and control strategies. In~\cite{Wang2008}, \(R_0\) is established as the invasion threshold for periodic compartmental models, while the time-averaged autonomous system may underestimate or overestimate infection risks in more general cases. Al Darabsah~\cite{Darabsah} examines the effect of the latent period and imperfect vaccination on measles dynamics and critical vaccination coverage. Kanchanarat et al.~\cite{Kanchanarat} use data from Thailand to assess how pulse-vaccination timing influences measles extinction and immunization scheduling. Spatial studies further demonstrate that transmission depends on both local epidemiological conditions and population connectivity.
	Lieberthal and Gardner~\cite{Lieberthal} show that superspreader risk is jointly determined by local transmission potential and network connectivity. A multipatch framework incorporating heterogeneous mobility and secondary contacts is developed in~\cite{Zhu} to examine movement-mediated regional transmission. Using a mobility-integrated renewal framework, Roy et al.~\cite{Roy} find that infections acquired within and outside an individual's home region jointly shape transmission and that excluding mobility can bias regional \(R_t\) estimates. Related vector-borne disease models provide further evidence of the complex effects of spatial movement. Host dispersal is shown by Gao and Lou~\cite{Gao2025} to either increase or decrease local and global disease prevalence, while spatial heterogeneity may reduce host prevalence but increase vector prevalence and disease persistence. In~\cite{Gao2024}, host commuting and vector migration are jointly modelled, revealing that nonhomogeneous mixing generally strengthens persistence, whereas proportional host-vector distributions minimize \(R_0\). From a stochastic perspective, Nipa and Allen~\cite{Nipa} use a multitype branching-process approximation and the backward Kolmogorov diffrential equation to estimate outbreak probability as a function of imported infections, seasonality, dispersal, and interpatch connectivity. Several epidemiological modelling studies have also examined disease dynamics, intervention thresholds, and optimal-control strategies for reducing transmission and population-level burden \cite{Khondaker2026Sci, Khondaker2026plos, Khondaker2025, Khondaker2025acta, Lahodny, Khondaker2022abr, Khondaker2022am}.

	Motivated by these epidemiological and spatial modelling studies, we formulate a division-level multipatch SVIR model that integrates local measles transmission, time-dependent vaccination, interdivisional population movement, and cross-patch infection pressure. Section~\ref{formulation} presents the model formulation, while Section~\ref{stability} examines the equilibria and stability properties, including the disease-free equilibrium in Section~\ref{dfe} and the basic reproduction number in Section~\ref{reproduction}. Numerical findings and discussions are presented in Section~\ref{numerical}, followed by the main conclusions in Section~\ref{conclusion}. 
	Additional figures and tables supporting the analysis are provided in Appendix~\ref{figs} and Appendix~\ref{tables}, respectively.
	%Additional figures and tables supporting the analysis are provided in the Appendix \ref{figs} Supplementary Figures and Appendix \ref{tables} Supplementary Tables sections of the Appendix.
	
	\section{Model formulation}\label{formulation}
	Recent surveillance during the 2026 measles outbreak in Bangladesh reveals substantial differences in disease burden and vaccination activity among administrative divisions \cite{DGHS}. The combined effects of heterogeneous vaccination coverage, high population density, and continued interdivisional mobility motivate a multipatch model incorporating local transmission, time-dependent vaccination, physical movement, and cross-patch infection pressure. The model has a common structure across all divisions, while the local parameters are calibrated separately using data from each division.\\
	The total population is divided into \(n\) geographically distinct but epidemiologically connected patches, with each patch representing one administrative division. Such a metapopulation structure allows local disease dynamics to interact through human movement and infection exchange between regions \cite{bharti, apolloni}. In patch $i, S_i(t), V_i(t), I_i(t),$ and $R_i(t)$ denote the susceptible, vaccinated, infectious, and recovered populations, respectively. Susceptible individuals enter at rate \(\Lambda_i\), acquire infection at rate \(\lambda_iS_i\), receive vaccination at rate \(\phi_i(t)S_i\), and experience natural mortality at rate \(\mu_iS_i\). The figure \ref{diagram} shows the spatial-flow diagram of the model. 
	\begin{figure}[H]
		\centering
		\includegraphics[width=5.5 in]{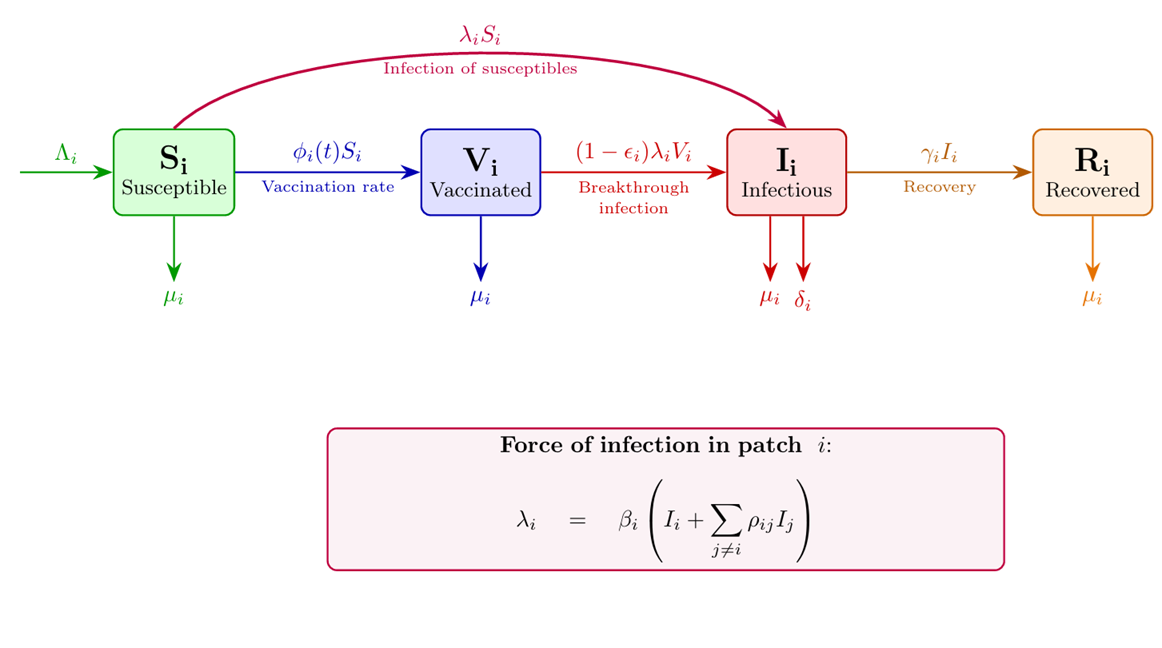}
		\caption{Transmission and spatial-flow structure of the multipatch SVIR model for measles dynamics.}	
		\label{diagram}
	\end{figure}
	Although the emergency vaccination campaign targets children aged 6--59 months, the model represents the total population of each division without explicit age structure. Therefore, $\phi_i(t)$ is interpreted as an effective population-level vaccination rate derived from the temporal intensity of the child-targeted campaign. The fitted scaling parameter $\alpha_i$ empirically translates the normalized vaccination pattern into its average division-wide effect and does not separately estimate the proportion of vaccine-eligible children. Consequently, $\phi_i(t)S_i$ represents an aggregated reduction in population susceptibility rather than uniform vaccination across all age groups. This formulation permits population-wide transmission and movement to be analysed, while age-specific differences in susceptibility, vaccine eligibility, and mobility remain outside the scope of the model.
	
 Vaccinated individuals retain partial protection. The parameter $\epsilon_i$ denotes vaccine effectiveness; therefore, \(1-\epsilon_i\) represents the remaining susceptibility to breakthrough infection, which occurs at rate \((1-\epsilon_i)\lambda_iV_i\) \cite{Liu}. Newly infected individuals enter \(I_i\) from both the susceptible and vaccinated compartments and leave through recovery at rate \(\gamma_i\), disease-induced mortality at rate \(\delta_i\), or natural mortality at the rate \(\mu_i\). Recovered individuals enter \(R_i\) at rate \(\gamma_iI_i\), where recovery is assumed to confer immunity, consistent with the standard biological description of measles infection \cite{bharti}. \\
 Human mobility is incorporated separately for each epidemiological compartment. For \(X\in{S, V, I, R}, d_{ij}^X\) denotes the per-capita movement rate from source patch \(j\) to receiving patch \(i\); consequently, \(d_{ij}^X X_j\) represents inflow to patch \(i\), whereas \(d_{ji}^X X_i\) represents outflow from patch \(i\) to patch \(j\). Allowing movement rates to vary among compartments reflects the possibility that mobility behavior differs according to epidemiological status, particularly because infectious individuals may travel less during illness. The force of infection in patch \(i\) is
 \begin{align*}
 	\lambda_i=\beta_i\left(I_i+\sum_{j\neq i} \rho_{ij}I_j\right), 
 	\end{align*}
 	where \(\beta_i\) is the effective transmission coefficient, \(I_i\) represents local infectious pressure, and \(\rho_{ij}I_j\) represents infection pressure exerted on receiving patch \(i\) by infectious individuals associated with source patch \(j\). Thus, \(\rho_{ij}\) captures cross-patch transmission without requiring the physical relocation of those infectious individuals, whereas \(d_{ij}^I\) describes their actual movement between patches. This distinction allows the model to capture both the physical movement of individuals and movement-mediated contact across divisional boundaries, two important mechanisms of spatial measles transmission \cite{bharti}. These biological and spatial processes are summarized in the flow diagram (figure~\ref{diagram}) and mathematically represented by the system of differential equations presented below
\begin{equation}
	\text{Patch } i \quad
	\left\{
	\begin{aligned}
		\dot{S}_i
		&=
		\Lambda_i-\lambda_iS_i-\phi_i(t)S_i-\mu_iS_i
		+\sum_{\substack{j=1,\;j\neq i}}^{n}
		\left(
		d_{ij}^{S}S_j-d_{ji}^{S}S_i
		\right),\\[2mm]
		\dot{V}_i
		&=\phi_i(t)S_i-(1-\epsilon_i)\lambda_iV_i-\mu_iV_i
		+\sum_{\substack{j=1,\;j\neq i}}^{n}
		\left(
		d_{ij}^{V} V_j-d_{ji}^{V}V_i
		\right),\\[2mm]		
		\dot{I}_i
		&=
		\lambda_iS_i+(1-\epsilon_i)\lambda_iV_i
		-(\gamma_i+\delta_i+\mu_i)I_i
		+\sum_{\substack{j=1,\;j\neq i}}^{n}
		\left(
		d_{ij}^{I}I_j-d_{ji}^{I}I_i
		\right),\\[2mm]		
		\dot{R}_i
		&=
		\gamma_iI_i-\mu_iR_i
		+\sum_{\substack{j=1,\;j\neq i}}^{n}
		\left(
		d_{ij}^{R}R_j-d_{ji}^{R}R_i
		\right).
	\end{aligned}
	\right.
	\label{model}
\end{equation}
and
\begin{equation}
	\lambda_i
	=
	\beta_i
	\left(
	I_i+\sum_{\substack{j=1,\;j\neq i}}^{n}\rho_{ij}I_j
	\right).
	\label{foi}
\end{equation}
%\rho_{ji}=measures the influence/movement/contact from patch j toward patch i.
% ρi​j=infection influence from patch j→i.
where
\[
S_i(0)>0,\quad
V_i(0)\geq0,\quad
I_i(0)\geq0,\quad
R_i(0)\geq0,
\]
and
\[
N_i=S_i(0)+V_i(0)+I_i(0)+R_i(0).
\]
Together, equations~\eqref{model} and \eqref{foi}, subject to the stated nonnegative initial conditions, define the multipatch SVIR framework. This system provides the basis for the subsequent disease-free equilibrium, reproduction number, and numerical analyses.
\subsection{Equilibria and Stability}\label{stability}
\subsubsection {Disease-free equilibrium}\label{dfe}
Since the vaccination rate $\phi_i(t)$ is assumed to be time-dependent, the proposed model is a non-autonomous system. Consequently, the system does not admit a constant disease-free equilibrium in the classical sense. To facilitate analytical derivation of the disease-free equilibrium (DFE) and the basic reproduction number, the time-dependent vaccination rate $\phi_i(t)$ is replaced by its mean value over the fitting period, denoted by $\bar{\phi}_i$. This provides a constant representative vaccination rate for each division and converts the non-autonomous system into an autonomous system. At the DFE, there is no infection in any patch. Hence, 
\begin{align}
	I_i^{0}=0,\qquad R_i^{0}=0,\qquad\lambda_i^{0}=0,\qquad i=1,2,\ldots,n.
\end{align}
Therefore, the DFE takes the form
\begin{align}
\mathcal{E}^{0}=\left( S_1^{0}, V_1^{0},0,0,\ldots,S_n^{0},V_n^{0},0,0\right) .
\end{align}
Substituting $I_i=0,\;R_i=0,\;$ and $\lambda_i=0$ into system \eqref{model}, we obtain the disease-free subsystem 
\begin{align}
	\left\{
	\begin{aligned}
		 \Lambda_i-\bar{\phi}_iS_i^{0}-\mu_iS_i^{0}+\sum_{\substack{j=1,\;j\neq i}}^{n}
		\left(d_{ij}^{S}S_j^{0}-d_{ji}^{S}S_i^{0} \right)&=0,\\[2mm]
		\bar{\phi}_iS_i^{0}-\mu_iV_i^{0}+\sum_{\substack{j=1,\;j\neq i}}^{n}
		\left(d_{ij}^{V}V_j^{0}-d_{ji}^{V}V_i^{0}\right)&=0,
	\end{aligned}
		\right.\qquad i=1,2,\ldots,n.
\end{align}
Now, we define 
\begin{align*}
	S^{0}=(S_1^{0},S_2^{0},\ldots,S_n^{0})^{T},\qquad V^{0}=(V_1^{0},V_2^{0},\ldots,V_n^{0})^{T},
	\end{align*}
	and
	\[\Lambda=(\Lambda_1,\Lambda_2,\ldots,\Lambda_n)^{T}.\]
	Further, let
	\[\phi=\operatorname{diag}(\bar{\phi}_1,\bar{\phi}_2,\ldots ,\bar{\phi}_n).\]
	Then the susceptible subsystem can be written in matrix form as
	\[H_S S^{0}=\Lambda,\]
	where
	\[ H_S=
	\begin{pmatrix}
		\mu_1+\bar{\phi}_1+\displaystyle\sum_{j\neq1}d_{j1}^{S}
		&-d_{12}^{S} & \cdots &-d_{1n}^{S}\\
		-d_{21}^{S} &\mu_2+\bar{\phi}_2+\displaystyle\sum_{j\neq2}d_{j2}^{S}& \cdots & -d_{2n}^{S}\\
		\vdots & \vdots &\ddots & \vdots \\
		-d_{n1}^{S} & -d_{n2}^{S} & \cdots & \mu_n+\bar{\phi}_n+\displaystyle\sum_{j\neq n}d_{jn}^{S}
		\end{pmatrix}.
	\]
	Hence, 
	\[S^{0}=H_S^{-1}\Lambda.\]
	Similarly, the vaccinated subsystem can be expressed as
	\[H_VV^{0}=\phi S^{0},
	\]
	where
	\[
	H_V=
	\begin{pmatrix}
		\mu_1+\displaystyle\sum_{j\neq1}d_{j1}^{V}
		&
		-d_{12}^{V}
		& \cdots &
		-d_{1n}^{V}
		\\
		-d_{21}^{V}
		&
		\mu_2+\displaystyle\sum_{j\neq2}d_{j2}^{V}
		& \cdots &
		-d_{2n}^{V}
		\\
		\vdots & \vdots & \ddots & \vdots
		\\
		-d_{n1}^{V}
		&
		-d_{n2}^{V}
		& \cdots &
		\mu_n+\displaystyle\sum_{j\neq n}d_{jn}^{V}
	\end{pmatrix}.
	\]
	Therefore,
	\[
	V^{0}=H_V^{-1}\phi S^{0}.
	\]
	Consequently, the disease-free equilibrium of system \eqref{model} is given by
	\[
	\mathcal{E}^{0}=
	\left(
	S_1^{0},V_1^{0},0,0,
	\ldots,
	S_n^{0},V_n^{0},0,0
	\right),
	\]
	where
	\[
	S^{0}=H_S^{-1}\Lambda,
	\qquad
	V^{0}=H_V^{-1}\phi S^{0}.
	\]
	Observe that the matrices $H_S$ and $H_V$ possess positive diagonal entries and non-positive off-diagonal entries. Under the standard assumptions that all demographic and migration parameters are nonnegative with $\mu_i>0$, both $H_S$ and $H_V$ are nonsingular $M$-matrices \cite{Berman}. Consequently,
	\[
	H_S^{-1}\geq0,
	\qquad
	H_V^{-1}\geq0,
	\]
	which guarantees that the disease-free equilibrium exists uniquely and remains biologically feasible.
	Define the total population in patch \(i\) by \[N_i(t)=S_i(t)+V_i(t)+I_i(t)+R_i(t),\]
	and the total population over all patches by
	\[N(t)=\sum_{i=1}^{n}N_i(t).\]
	Adding all equations of system \eqref{model}, we obtain
	\[
	\begin{aligned}
		\frac{dN(t)}{dt}&=\sum_{i=1}^{n}\Lambda_i-\sum_{i=1}^{n}\mu_i\left(S_i+V_i+I_i+R_i \right)-\sum_{i=1}^{n}\delta_iI_i\\ &\quad+\sum_{i=1}^{n}\sum_{\substack{j=1,\;j\neq i}}^{n}\Big[ (d_{ij}^{S}S_j-d_{ji}^{S}S_i)+(d_{ij}^{V}V_j-d_{ji}^{V}V_i)\\&\quad\quad+(d_{ij}^{I}I_j-d_{ji}^{I}I_i)+(d_{ij}^{R}R_j-d_{ji}^{R}R_i)\Big] 
	\end{aligned}
	\]
	Since the migration terms cancel pairwise when summed over all patches, we have \[\frac{dN(t)}{dt}=\sum_{i=1}^{n}\Lambda_i-\sum_{i=1}^{n}\mu_i N_i-\sum_{i=1}^{n}\delta_iI_i\]
	Let \[\tilde\Lambda=\sum_{i=1}^{n}\Lambda_i,\quad \mu^{*}=\min_{1\leq i\leq n}\mu_i.\]
	Since \(\delta_iI_i\geq 0\), it follows that \[\frac{dN(t)}{dt}\leq \tilde\Lambda-\mu^{*}N(t).\]
	By the comparison principle, 
	\[
	\limsup_{t\to\infty}N(t)\leq \frac {\tilde\Lambda}{\mu^{*}}.
	\]
Let \(S=(S_1,\ldots,S_n)^T\) and \(V=(V_1,\ldots,V_n)^T\). Since \(H_SS^0=\Lambda\) and the infection terms are nonnegative,

$$
\frac{dS}{dt}\leq\Lambda-H_SS=H_S(S^0-S).
$$
Because \(-H_S\) is a stable Metzler matrix, {the comparison principle \cite{Smith}} gives \(0\leq S(t)\leq S^0\), provided that \(0\leq S(0)\leq S^0\). Similarly, using \(H_VV^0=\phi S^0\) and \(S(t)\leq S^0\),

$$
\frac{dV}{dt}\leq\phi S^0-H_VV=H_V(V^0-V),
$$
and hence \(0\leq V(t)\leq V^0\), provided that \(0\leq V(0)\leq V^0\). Therefore, the feasible region is
$$
\Omega=
\left\{
(S,V,I,R)\in\mathbb{R}_{+}^{4n}:
N(t)\leq\frac{\widetilde{\Lambda}}{\mu^{*}},
\quad 0\leq S_i\leq S_i^0,
\quad 0\leq V_i\leq V_i^0,
\quad i=1,\ldots,n
\right\}.
$$
For initial conditions in \(\Omega\), this compact region is positively invariant. Hence, the model solutions remain nonnegative and bounded for all \(t\geq0\), ensuring that the model is mathematically and epidemiologically well posed.

\subsection{Basic Reproduction Number}\label{reproduction}
	The basic reproduction number is derived using the next-generation matrix method \cite{Van}. %At the disease-free equilibrium,
%	\[
%	E^{0}=
%	\left(
%	S_1^{0},V_1^{0},0,0,\ldots,S_n^{0},V_n^{0},0,0
%	\right).
%	\]
The vector of infected variables is
	\[
	I=(I_1,I_2,\ldots,I_n)^T.
	\]
From the infected equation of the model \eqref{model}, we have
	\[
	\dot{I}_i
	=
	\lambda_iS_i+(1-\epsilon_i)\lambda_iV_i
	-(\gamma_i+\delta_i+\mu_i)I_i
	+\sum_{\substack{j=1,\;j\neq i}}^{n}
	\left(
	d_{ij}^{I}I_j-d_{ji}^{I}I_i
	\right).
	\]
The force of infection in patch \(i\) is
	\[
	\lambda_i
	=
	\beta_i
	\left(
	I_i+\sum_{\substack{j=1,\;j\neq i}}^{n}\rho_{ij}I_j
	\right).
	\]
	At the disease-free equilibrium \((\mathcal{E}^0)\), all infectious variables are zero and hence \(\lambda_i = 0\). However, to construct the next-generation matrix, the new-infection terms are linearized around the disease-free equilibrium by treating the infectious variables as small perturbations. Evaluating the susceptible and vaccinated populations at their disease-free values gives the following linearized new-infection term in patch \(i\):
	\begin{align}
	\mathcal{F}_i
	=
	\beta_i
	\left(S_i^0+(1-\epsilon_i)V_i^0\right)
	\left(
	I_i+\sum_{\substack{j=1,\;j\neq i}}^{n}\rho_{ij}I_j
	\right).
	\end{align}
	Let,
	\[
	A_i=S_i^0+(1-\epsilon_i)V_i^0.
	\]
	Then
	\[
	\mathcal{F}_i
	=
	\beta_i A_i
	\left(
	I_i+\sum_{\substack{j=1,\;j\neq i}}^{n}\rho_{ij}I_j
	\right).
	\]
	Therefore, the new infection matrix \(\mathcal{F}\) is given by
	\[
	\mathcal{F}=
	\begin{pmatrix}
		\beta_1A_1
		&
		\beta_1A_1\rho_{12}
		&
		\cdots
		&
		\beta_1A_1\rho_{1n}
		\\
		\beta_2A_2\rho_{21}
		&
		\beta_2A_2
		&
		\cdots
		&
		\beta_2A_2\rho_{2n}
		\\
		\vdots & \vdots & \ddots & \vdots
		\\
		\beta_nA_n\rho_{n1}
		&
		\beta_nA_n\rho_{n2}
		&
		\cdots
		&
		\beta_nA_n
	\end{pmatrix}.
	\]
	Next, the transition matrix \(\mathcal{V}\), describing recovery, disease-induced death, natural death, and movement of infectious individuals, is
	\[
	\mathcal{V}=
	\begin{pmatrix}
		\gamma_1+\delta_1+\mu_1+\displaystyle\sum_{j\neq1}d_{j1}^{I}
		&
		-d_{12}^{I}
		&
		\cdots
		&
		-d_{1n}^{I}
		\\
		-d_{21}^{I}
		&
		\gamma_2+\delta_2+\mu_2+\displaystyle\sum_{j\neq2}d_{j2}^{I}
		&
		\cdots
		&
		-d_{2n}^{I}
		\\
		\vdots & \vdots & \ddots & \vdots
		\\
		-d_{n1}^{I}
		&
		-d_{n2}^{I}
		&
		\cdots
		&
		\gamma_n+\delta_n+\mu_n+\displaystyle\sum_{j\neq n}d_{jn}^{I}
	\end{pmatrix}.
	\]
Let \(v_{ij}\) denote the \((i,j)\)-th entry of \(\mathcal{V}\).
For each column \(j\), \(v_{jj}
=
\gamma_j+\delta_j+\mu_j
+\sum_{\substack{i=1\\i\neq j}}^{n}d_{ij}^{I}\), while
\(
\sum_{\substack{i=1\\i\neq j}}^{n}|v_{ij}|
=
\sum_{\substack{i=1\\i\neq j}}^{n}d_{ij}^{I}.
\)
Therefore,
\(
v_{jj}
-
\sum_{\substack{i=1\\i\neq j}}^{n}|v_{ij}|
=
\gamma_j+\delta_j+\mu_j>0.
\)
Hence, \(\mathcal{V}\) is a strictly column-diagonally dominant
matrix with positive diagonal entries and nonpositive off-diagonal entries, that is, a \(Z\)-matrix. Therefore,
it is a nonsingular \(M\)-matrix, and consequently, \(\mathcal{V}^{-1}\geq 0\).
	The next-generation matrix is therefore
	\[
	K=\mathcal{F}\mathcal{V}^{-1}.
	\]
	Consequently, the global basic reproduction number of the connected multi-patch system is defined by
	\begin{align}
		\mathcal{R}_0=\rho(\mathcal{F}\mathcal{V}^{-1}),	
	\end{align}
	where \(\rho\) denotes the spectral radius.
	To examine the transmission potential within an individual patch, we now consider the local basic reproduction number. This corresponds to the situation where inter-patch movement and cross-patch infection are ignored. Hence, for an isolated patch \(i\), where mobility and cross-patch transmission are neglected, the infected equation reduces to 
	\[\dot I_i=\beta_i\left(S_i+(1-\epsilon_i)V_i\right)I_i-(\gamma_i+\delta_i+\mu_i)I_i. \]
	At the disease-free equilibrium of patch \(i\), \(S_i=S_i^{0},\) and \(V_i=V_i^{0}.\)
	Applying the next-generation matrix approach \cite{Van}, the local basic reproduction number for patch \(i\) is obtained as 
	\begin{align}
		\mathcal{R}_{0}^{i}=\frac{\beta_i\left(S_i^0+(1-\epsilon_i)V_i^0\right)}{\gamma_i+\delta_i+\mu_i},\;i=1, 2,\ldots,n.
		\end{align}
	The quantity \(\mathcal{R}_{0}^{i}\) measures the transmission potential of measles in patch \(i\) when considered in isolation. If \(\mathcal{R}_{0}^{i}>1\), measles can invade that isolated patch. In the connected multipatch system, however, invasion is governed by the global reproduction number \(\mathcal{R}_{0}\), which depends jointly on local transmission, infectious movement, and cross-patch infection.
\begin{comment}
	\begin{Th}
		Suppose that \(\gamma_i=\gamma,\;\delta_i=\delta,\;\mu_i=\mu,\;i=1,2,\ldots,n.\) Then \[\min_j c_j \min_i \mathcal{R}_{0}^{i}\leq \mathcal{R}_{0} \leq \max_j c_j \max_i \mathcal{R}_{0}^{i},\] where \[c_j=\sum_{i=1}^{n}C_{ij},\qquad C_{ij}=
		\begin{cases}
			1, & i=j,\\
			\rho_{ij}, & i\neq j.
		\end{cases}\]
		In particular, if \(\sum_{i=1}^{n}C_{ij}=1,\) then 
		\(\min_i \mathcal{R}_{0}^{i}\leq \mathcal{R}_{0} \leq  \max_i \mathcal{R}_{0}^{i}.\)
	\end{Th}
	\begin{proof}
		Since \(\mathcal F_{ij}=B_iC_{ij},\) the sum of the \(j\)-th column of \(\mathcal F\) satisfies 
		\[\min_iB_ic_j\leq\sum_{i=1}^{n}B_iC_{ij}\leq\max_iB_ic_j.\]
		Hence, the column sums of \(K=\mathcal{F}\mathcal{V}^{-1}\) satisfy 
		\[\min_jc_j\min_i\mathcal{R}_{0}^{i}\leq\rho(K)\leq\max_jc_j\max_i\mathcal{R}_{0}^{i}.\]
	Moreover, if the coupling coefficients satisfy \(c_j=1,\;j=1,2,\ldots,n,\) then  
	\begin{align}
		\min_i \mathcal{R}_{0}^{i}\leq \mathcal{R}_{0} \leq  \max_i \mathcal{R}_{0}^{i}.
	\end{align}
	Therefore, the global reproduction number is bounded by the local reproduction numbers through the coupling structure of the connected patches. This shows that the overall transmission dynamics depend on both local outbreak intensity and inter-patch interactions.	
	\end{proof}
	Since the disease transmission dynamics are governed by the infectious classes, the stability of the disease-free equilibrium depends on whether small perturbations in the infected population decay or grow with time. 
\end{comment}

\subsection*{Stability Analysis}
\begin{Th}
	The disease-free equilibrium, $\mathcal{E}^{0}=\left(S_1^{0},V_1^{0},0,0,\ldots,S_n^{0},V_n^{0},0,0\right)$ of system \eqref{model} is locally asymptotically stable if \(\mathcal{R}_{0}<1,\) and unstable if \(\mathcal{R}_{0}>1\).
\end{Th}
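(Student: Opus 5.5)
The plan is to apply the next-generation framework of van den Driessche and Watmough \cite{Van} to the autonomous system obtained by replacing $\phi_i(t)$ with $\bar{\phi}_i$. This requires two things: checking the structural hypotheses (A1)--(A5) of that framework, and then reading off local stability from the sign of the spectral abscissa of the Jacobian at $\mathcal{E}^0$.

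\textbf{Ordering and splitting.} First order the state vector as $x=(I_1,\ldots,I_n,S_1,\ldots,S_n,V_1,\ldots,V_n,R_1,\ldots,R_n)$. Write the infected equations as $\dot I=\mathcal{F}_{\mathrm{nl}}(x)-\mathcal{V}_{\mathrm{nl}}(x)$, where $\mathcal{F}_{\mathrm{nl}}$ collects $\lambda_iS_i+(1-\epsilon_i)\lambda_iV_i$ and $\mathcal{V}_{\mathrm{nl}}$ collects the removal and movement terms.

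\textbf{Hypotheses (A1)--(A5).} Nonnegativity of $\mathcal{F}_{\mathrm{nl}}$ and of the outflows is immediate. Every new-infection term contains a factor $I_j$, so it vanishes when $I=0$. In the absence of infection the $I$-compartments stay at zero. For (A5), set $I=0$. The remaining $(S,V,R)$ subsystem is linear, with matrix
\[
J_0=\begin{pmatrix}-H_S&0&0\\ \phi&-H_V&0\\ 0&0&-H_R\end{pmatrix},
\]
where $H_R$ is defined like $H_V$ using $d^R_{ij}$. This matrix is block lower triangular. Each diagonal block is the negative of a nonsingular $M$-matrix, since it is strictly column-diagonally dominant with margin $\mu_i>0$ or $\mu_i+\bar\phi_i$, as already noted for $H_S,H_V$ in Section~\ref{dfe}. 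Hence all eigenvalues of $J_0$ have negative real part.

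\textbf{The Jacobian at $\mathcal{E}^0$.} Because the new-infection terms are products with $I_j$ and $I=0$ at $\mathcal{E}^0$, their partial derivatives with respect to $S,V,R$ vanish there. The Jacobian therefore has the block form
\[
J(\mathcal{E}^0)=\begin{pmatrix}\mathcal{F}-\mathcal{V}&0\\ \ast&J_0\end{pmatrix},
\]
with $\mathcal{F}$ and $\mathcal{V}$ exactly the matrices computed in Section~\ref{reproduction}. Its spectrum is the union of the spectra of $\mathcal{F}-\mathcal{V}$ and $J_0$. Stability is therefore decided by the spectral abscissa $s(\mathcal{F}-\mathcal{V})$.

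\textbf{Key step: the sign of $s(\mathcal{F}-\mathcal{V})$.} This is the main obstacle and the heart of the argument. I would reproduce the argument of \cite{Van} (Lemma 1 / Theorem 2) rather than only cite it. The ingredients are as follows:
\begin{itemize}
\item $\mathcal{F}\ge 0$ entrywise, since $\beta_i,A_i,\rho_{ij}\ge0$ and $A_i>0$ because $S^0>0$.
\item $\mathcal{V}$ is a nonsingular $M$-matrix, already shown.
\item Consequently $\mathcal{F}-\mathcal{V}$ is a Metzler matrix, and its spectral abscissa $s$ is a real eigenvalue with a nonnegative eigenvector.
\end{itemize}
The claim is $s(\mathcal{F}-\mathcal{V})<0\iff\rho(\mathcal{F}\mathcal{V}^{-1})<1$, and likewise with both inequalities reversed.
\begin{itemize}
\item \emph{If $\rho(\mathcal{F}\mathcal{V}^{-1})<1$.} Then $\mathcal{V}-\mathcal{F}=(I-\mathcal{F}\mathcal{V}^{-1})\mathcal{V}$. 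Its inverse is $\mathcal{V}^{-1}\sum_k(\mathcal{F}\mathcal{V}^{-1})^k\ge0$, so $\mathcal{V}-\mathcal{F}$ is a nonsingular $M$-matrix and $s(\mathcal{F}-\mathcal{V})<0$.
\item \emph{Conversely.} If $\mathcal{V}-\mathcal{F}$ is a nonsingular $M$-matrix, then $(\mathcal{V}-\mathcal{F})^{-1}\ge0$. The identity $\mathcal{V}^{-1}\mathcal{F}\,(\mathcal{V}-\mathcal{F})^{-1}=(\mathcal{V}-\mathcal{F})^{-1}-\mathcal{V}^{-1}$ combined with Perron--Frobenius gives $\rho(\mathcal{V}^{-1}\mathcal{F})<1$. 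Finally, $\rho(\mathcal{V}^{-1}\mathcal{F})=\rho(\mathcal{F}\mathcal{V}^{-1})$ by similarity of the spectra of $AB$ and $BA$.
\end{itemize}
The \emph{unstable} direction uses the Perron eigenvalue. If $\mathcal{R}_0>1$, take the Perron vector $w\ge0$, $w\ne0$, of $\mathcal{V}^{-1}\mathcal{F}$. Then $(\mathcal{F}-\mathcal{V})w=\mathcal{V}(\mathcal{R}_0-1)\mathcal{V}^{-1}\mathcal{F}w/\mathcal{R}_0$. A subinvariance/Collatz--Wielandt argument then yields $s(\mathcal{F}-\mathcal{V})>0$. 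This step needs care if $\mathcal{F}$ is reducible, for example if some $\rho_{ij}=0$. I would handle that case by the $M$-matrix characterization, which avoids irreducibility: $\mathcal{V}-\mathcal{F}$ nonsingular $M$ would force $\mathcal{R}_0<1$, and $s=0$ would force $\mathcal{R}_0=1$.

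\textbf{Conclusion.} Combining the results, $\mathcal{R}_0<1$ gives all eigenvalues of $J(\mathcal{E}^0)$ negative real part, so $\mathcal{E}^0$ is locally asymptotically stable by linearization. $\mathcal{R}_0>1$ gives a positive real eigenvalue, so $\mathcal{E}^0$ is unstable.
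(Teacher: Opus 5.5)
Your proposal is correct and follows essentially the same route as the paper, which also reduces local stability to the sign of $s(\mathcal{F}-\mathcal{V})$ for the linearized infected subsystem and invokes the van den Driessche--Watmough equivalence between $s(\mathcal{F}-\mathcal{V})<0$ (resp.\ $>0$) and $\rho(\mathcal{F}\mathcal{V}^{-1})<1$ (resp.\ $>1$). The paper only cites that equivalence and does not discuss the uninfected directions, so your block-lower-triangular Jacobian with the stable block $J_0$ and your reproof of the $M$-matrix lemma fill in details it leaves implicit; the one step you only assert, that $s(\mathcal{F}-\mathcal{V})=0$ rules out $\mathcal{R}_0>1$, follows by applying your converse bullet to $\mathcal{V}+\varepsilon I$, which gives $\rho(\mathcal{F}(\mathcal{V}+\varepsilon I)^{-1})<1$ for every $\varepsilon>0$, and then letting $\varepsilon\to 0$.
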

\begin{proof}
	The linearized infected subsystem at \(\mathcal{E}_0\) is
	\begin{align} 
		\dot{I}=(\mathcal{F}-\mathcal{V})I,
	\end{align}
		where $I=(I_1, I_2,\ldots,I_n)^{T},$ \(\mathcal{F}\) denotes the matrix of new infection terms, and \(\mathcal{V}\) represents the transition matrix associated with recovery, natural death, disease-induced death, and migration of infectious individuals.			
	Let \(s(\mathcal{F}-\mathcal{V})\) denote the spectral bound of \(\mathcal{F}-\mathcal{V}\), that is, the largest real part among its eigenvalues. Since \(\mathcal{F}\geq0\) and \(\mathcal{V}\) is a nonsingular \(M\)-matrix, the standard next-generation matrix result~\cite{Van} gives \[ s(\mathcal{F}-\mathcal{V})<0 \quad\Longleftrightarrow\quad \rho(\mathcal{F}\mathcal{V}^{-1})<1, \] and \[ s(\mathcal{F}-\mathcal{V})>0 \quad\Longleftrightarrow\quad \rho(\mathcal{F}\mathcal{V}^{-1})>1. \] Since  \(\mathcal{R}_{0} =\rho(\mathcal{F}\mathcal{V}^{-1}),\) 
	if \(\mathcal{R}_{0}<1,\) then each infected individual generates less than one secondary infection on average. Consequently, all eigenvalues of the matrix $(\mathcal{F}-\mathcal{V})$ have negative real parts, implying that solutions of the linearized infected subsystem decay exponentially to zero. Therefore, the disease-free equilibrium \(\mathcal{E}^{0}\) is locally asymptotically stable.
			
Conversely, if \(\mathcal{R}_{0}>1,\) then infection can invade the population, and at least one eigenvalues of $(\mathcal{F}-\mathcal{V})$ has positive real part. Hence, perturbations around the disease-free equilibrium grow with time, implying that \(\mathcal{E}^{0}\) is unstable. Therefore, the disease-free equilibrium \(\mathcal{E}^{0}\) is locally asymptotically stable whenever \(\mathcal{R}_{0}<1\), and unstable whenever \(\mathcal{R}_{0}>1\).
\end{proof}
\begin{Th}
	Assume that the transmission--movement network is irreducible. If \(\mathcal{R}_0<1\), then the disease-free equilibrium \(\mathcal{E}^{0}\) of the autonomous system associated with \eqref{model} is globally asymptotically stable in the feasible region \(\Omega\).				
\end{Th}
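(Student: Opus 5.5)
Global stability will follow from a comparison argument on the infected subsystem, combined with the bounds $S_i\le S_i^0$, $V_i\le V_i^0$ that hold in $\Omega$ (established in Section~\ref{dfe}), and then a limiting-system argument for the remaining compartments. Throughout, $\phi_i(t)$ is replaced by $\bar\phi_i$, so the system is autonomous.

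\textbf{Step 1 (linear majorant for $I$).} For any solution starting in $\Omega$, positive invariance gives $S_i(t)\le S_i^0$ and $V_i(t)\le V_i^0$ for all $t\ge0$. Since $\lambda_i\ge0$, we have $\lambda_i\bigl(S_i+(1-\epsilon_i)V_i\bigr)\le \beta_iA_i\bigl(I_i+\sum_{j\ne i}\rho_{ij}I_j\bigr)$. Therefore
\[
\dot I\le(\mathcal F-\mathcal V)I .
\]
The matrix $\mathcal F-\mathcal V$ is Metzler, because $\mathcal F\ge0$ and the off-diagonal entries of $-\mathcal V$ are $d^I_{ij}\ge0$. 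The comparison principle for cooperative linear systems \cite{Smith} then yields $0\le I(t)\le \bar I(t)$, where $\bar I$ solves $\dot{\bar I}=(\mathcal F-\mathcal V)\bar I$ with $\bar I(0)=I(0)$. By the local stability theorem above, $\mathcal R_0<1$ is equivalent to $s(\mathcal F-\mathcal V)<0$. Hence $\bar I(t)\to0$ exponentially, and so $I(t)\to0$ exponentially.

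\textbf{Step 2 (convergence of $R$, $S$, $V$).} The $R$-equation is linear, $\dot R=\Gamma I-H_RR$, where $H_R$ is the $M$-matrix built from $\mu_i$ and $d^R_{ij}$ exactly as $H_S$ is built. Since $I\to0$ exponentially, variation of constants gives $R\to0$.

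The $S$-equation reads $\dot S=\Lambda-H_SS-\mathrm{diag}(\lambda_i)S$, with $\lambda_i\to0$ exponentially and $S$ bounded. It is therefore an asymptotically autonomous perturbation of $\dot S=\Lambda-H_SS$, whose unique equilibrium $S^0=H_S^{-1}\Lambda$ is globally exponentially stable because $-H_S$ is a stable Metzler matrix. Writing $S-S^0$ via variation of constants, the forcing term $-\mathrm{diag}(\lambda)S$ decays to zero, so $S\to S^0$.

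Feeding this into the $V$-equation, $\dot V=\phi S-H_VV-(\text{decaying terms})$, the same argument gives $V\to H_V^{-1}\phi S^0=V^0$. Thus every solution in $\Omega$ converges to $\mathcal E^0$. Combined with the local asymptotic stability from the previous theorem, this gives global asymptotic stability in $\Omega$.

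\textbf{Step 3 (role of irreducibility; main obstacle).} The comparison step is the crux. It requires two things: that the bounds $S\le S^0$ and $V\le V^0$ persist along solutions, and that $\mathcal F-\mathcal V$ is Metzler. The first is delicate. The paper's derivation of $V\le V^0$ used $S\le S^0$, and it holds only for initial data already in $\Omega$. I would recheck this by noting that $(S^0-S,\,V^0-V)$ satisfies a cooperative system with nonnegative forcing, because the infection terms enter with the correct sign.

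Irreducibility is not strictly needed for the exponential decay argument. I would use it to invoke Perron--Frobenius: it guarantees that $\mathcal F-\mathcal V$ has a positive left eigenvector $w$ for the eigenvalue $s(\mathcal F-\mathcal V)$. This gives an alternative Lyapunov function $L=w^TI$ with
\[
\dot L\le s(\mathcal F-\mathcal V)\,L\le0,
\]
and LaSalle's invariance principle then yields the same conclusion in a cleaner form. I would present this Lyapunov version as the main argument and keep the comparison argument as the justification for the inequality $\dot L\le s\,L$.
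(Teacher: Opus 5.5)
Your proof is correct. It differs from the paper in how you obtain $I\to0$.

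The paper takes the positive left Perron--Frobenius vector $\vartheta$ of $\mathcal V^{-1}\mathcal F$; this is the only place irreducibility enters. It then sets $\kappa=\vartheta\mathcal V^{-1}$ and uses $\mathcal L=\kappa I$. The bounds $S\le S^0$, $V\le V^0$ give $\dot{\mathcal L}\le\kappa(\mathcal F-\mathcal V)I=(\mathcal R_0-1)\vartheta I$, and LaSalle finishes. Its handling of $R$, $S$, $V$ afterwards matches your Step~2.

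Your main route instead bounds $I(t)$ by $e^{(\mathcal F-\mathcal V)t}I(0)$, using that $\mathcal F-\mathcal V$ is Metzler with $s(\mathcal F-\mathcal V)<0$. This is more elementary and gives an explicit exponential rate. It needs neither LaSalle nor irreducibility, so it shows the irreducibility hypothesis is unnecessary for this conclusion.

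Your alternative Lyapunov function $w^{T}I$, built from the left eigenvector of $\mathcal F-\mathcal V$ itself, is also valid. However, $\dot L\le sL$ follows directly from the componentwise inequality $\dot I\le(\mathcal F-\mathcal V)I$ and $w>0$. No comparison theorem is needed for it, and Gronwall rather than LaSalle then gives exponential decay.

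What the paper's Shuai--van den Driessche form buys is that $\dot{\mathcal L}$ is written explicitly in terms of $\mathcal R_0-1$. This makes the threshold role of $\mathcal R_0$ visible, and it is the form one would push to the borderline case $\mathcal R_0=1$ via LaSalle, where your exponential majorant degenerates.

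Your remark that $(S^0-S,\,V^0-V)$ solves a cooperative system with nonnegative forcing is a correct justification of the invariance of the bounds defining $\Omega$. It is tidier than the paper's two-step comparison.
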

\begin{proof}
Since \(\mathcal{R}_{0}=\rho(\mathcal{F}\mathcal{V}^{-1})\), the matrices \(\mathcal{F}\mathcal{V}^{-1}\) and \(\mathcal{V}^{-1}\mathcal{F}\) have the same spectral radius. By {the Perron--Frobenius theorem \cite{Berman}}, there exists a positive left eigenvector \(\vartheta=(\vartheta_1, \vartheta_2, \ldots, \vartheta_n)\) of the nonnegative irreducible matrix \(\mathcal{V}^{-1}\mathcal{F}\) corresponding to \(\mathcal{R}_{0}\) such that
\[\vartheta \mathcal{V}^{-1}\mathcal{F}=\mathcal{R}_{0}\vartheta .\]
Define \(\kappa=(\kappa_1, \kappa_2,\ldots,\kappa_n)= \vartheta \mathcal{V}^{-1}.\) Since \(\vartheta>0\) and \(\mathcal{V}^{-1}\geq0\), it follows that \(\kappa_i>0,\;i=1, 2, \ldots, n.\)
Now consider the Lyapunov function
\begin{align}
	\mathcal{L}(I)=\sum_{i=1}^{n}\kappa_i I_i,
\end{align}
				
Clearly, \(\mathcal{L}(I)\geq0,\) with equality only when \(I_i=0,\; i=1, 2, \ldots, n.\)
Differentiating \(\mathcal{L}(I)\) along the trajectories of system \eqref{model} yields
\begin{align}
	\dot{\mathcal{L}}=\sum_{i=1}^{n}\kappa_i\dot I_i.
\end{align}
Using the infected subsystem of \eqref{model}, we obtain
\begin{align*}
\dot{\mathcal{L}}=&\sum_{i=1}^{n}\kappa_i\Bigg[\beta_i
\left(S_i+(1-\epsilon_i)V_i \right)
\left(I_i+\sum_{\substack{j=1,\;j\neq i}}^{n}\rho_{ij}I_j \right) \\
&-(\gamma_i+\delta_i+\mu_i)I_i+ \sum_{\substack{j=1,\;j\neq i}}^{n}\left(d_{ij}^{I}I_j-d_{ji}^{I}I_i \right)\Bigg].
\end{align*}	
	Since the feasible region $\Omega$ is positively invariant, we have \(S_i\leq S_i^{0}\)	and \(V_i\leq V_i^{0}\).
	Hence, \(S_i+(1-\epsilon_i)V_i\leq S_i^{0}+(1-\epsilon_i)V_i^{0}\).
	Therefore, \[\dot{\mathcal{L}}\leq\kappa(\mathcal{F}-\mathcal{V})I.\]		
	Using the definition \(\kappa=\vartheta \mathcal{V}^{-1}\),  
	we obtain 
\begin{align*}
	\dot{\mathcal{L}}&\leq \vartheta \mathcal{V}^{-1}(\mathcal{F}-\mathcal{V})I\\ &=(\mathcal{R}_{0}-1)\vartheta I. 
\end{align*}
	Since \(\mathcal{R}_{0}<1\), \(\vartheta>0\), and \(I\geq0\), it follows that \(	\dot{\mathcal{L}}\leq0,\) with equality only when \(I=0\). Hence, by LaSalle's invariance principle 
	\cite{LaSalle}, every solution initiating in \(\Omega\) approaches the 
	infection-free invariant set, and consequently,
	\[
	I_i(t)\longrightarrow0,
	\qquad i=1,2,\ldots,n.
	\]
	Since \(\lambda_i(t)\) depends linearly on the infectious populations, 
	\(I_i(t)\to0\) also implies that \(\lambda_i(t)\to0\). The recovered 
	subsystem can be written as
	\[
	\dot R_i
	=
	\gamma_iI_i-\mu_iR_i
	+\sum_{\substack{j=1,\;j\neq i}}^{n}
	\left(d_{ij}^{R}R_j-d_{ji}^{R}R_i\right).
	\]
	Its coefficient matrix is stable because it has the same nonsingular 
	\(M\)-matrix structure as the other transition matrices. Since its input 
	\(\gamma_iI_i(t)\) tends to zero, it follows that
	\[
	R_i(t)\longrightarrow0,
	\qquad i=1,2,\ldots,n.
	\]
Finally, as \(I_i(t)\to0\) and \(\lambda_i(t)\to0\), the susceptible and 
vaccinated subsystems approach the disease-free subsystem. Since \(H_S\) 
and \(H_V\) are nonsingular \(M\)-matrices, this subsystem is asymptotically 
stable and therefore \(S(t)\longrightarrow S^0,\) and \(V(t)\longrightarrow V^0.\)	Thus, every solution initiating in \(\Omega\) converges to	\(\mathcal{E}^{0}\). Therefore, \(\mathcal{E}^{0}\) is globally asymptotically stable whenever \(\mathcal{R}_{0}<1\).	
\end{proof}
\begin{Th}
	Assume that the mobility coefficients satisfy $d_{ij}^{I}>0,\; i,j=1,2,\ldots,n,\;i\neq j.$ If $\mathcal{R}_{0}>1,$ then system \eqref{model} is uniformly persistent in the infectious classes. More precisely, there exists a positive constant $\xi$ such that every solution with initially positive infection satisfies \[\liminf_{t\to\infty} I_i(t)\geq\xi,\;i=1,2,\ldots,n.\]
	Furthermore, system \eqref{model} possesses at least one endemic equilibrium.			
\end{Th}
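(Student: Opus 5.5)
We treat the autonomous system (with $\phi_i(t)$ replaced by $\bar\phi_i$) on the positively invariant compact set $\Omega$. The plan is to apply the abstract persistence theory for dissipative semiflows (Hale--Waltman / Zhao, Thieme) with the boundary set
\[
\partial\Omega_0=\{x\in\Omega:\ I_i=0 \text{ for some } i\},\qquad \Omega_0=\Omega\setminus\partial\Omega_0 .
\]
Existence of an endemic equilibrium will then follow from the standard result that a uniformly persistent, dissipative semiflow with a compact global attractor has an equilibrium in $\Omega_0$ (Zhao's theorem, e.g.\ Theorem 1.3.7 in Zhao's monograph).

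\textbf{Step 1 (forward invariance of $\Omega_0$).} Since $d^I_{ij}>0$ for all $i\neq j$, we have
\[
\dot I_i\ge d^I_{ij}I_j-(\gamma_i+\delta_i+\mu_i+\textstyle\sum_k d^I_{ki})I_i .
\]
Hence, if $I_j(0)>0$ for some $j$, then $I_i(t)>0$ for every $i$ and every $t>0$. So $\Omega_0$ is positively invariant, and every solution with initially positive infection enters $\Omega_0$ instantly. Dissipativity and compactness of the attractor come from the bound $N\le\tilde\Lambda/\mu^*$ already shown.

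\textbf{Step 2 (dynamics on the boundary).} Let $M_\partial=\{x:\ \varphi_t(x)\in\partial\Omega_0\ \forall t\ge0\}$. By Step 1, $M_\partial=\{I\equiv0\}$. On this set $R\to0$, $S\to S^0$ and $V\to V^0$, by the $M$-matrix stability of $H_S$, $H_V$ and of the $R$-matrix, as in the proof of global stability. Thus $\mathcal E^0$ is the only compact invariant set in $M_\partial$, it is isolated, and it is acyclic.

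\textbf{Step 3 (weak repeller; the main obstacle).} We must show $W^s(\mathcal E^0)\cap\Omega_0=\emptyset$, i.e.\ that $\limsup_{t\to\infty}\|\varphi_t(x)-\mathcal E^0\|\ge\eta$ for $x\in\Omega_0$. Since $\mathcal R_0>1$, the equivalence cited from \cite{Van} gives $s(\mathcal F-\mathcal V)>0$. By continuity, $s(\mathcal F_\eta-\mathcal V)>0$ for small $\eta$, where $\mathcal F_\eta$ uses $A_i^\eta=(S_i^0-\eta)+(1-\epsilon_i)(V_i^0-\eta)$. The matrix $\mathcal F_\eta-\mathcal V$ is Metzler and irreducible (off-diagonals $\ge d^I_{ij}>0$), so Perron--Frobenius yields a positive left eigenvector $w$ for the eigenvalue $s_\eta>0$.

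Suppose, for contradiction, that the trajectory stays within $\eta$ of $\mathcal E^0$ for $t\ge T$. Then $\dot I\ge(\mathcal F_\eta-\mathcal V)I$, and therefore $\frac{d}{dt}(wI)\ge s_\eta\, wI$. Since $wI(T)>0$, this forces $wI\to\infty$, contradicting boundedness. The delicate point is handling $\limsup$ versus eventual closeness. This is done in the usual way, by first arguing $\liminf$ and noting that $S,V$ become close to $S^0,V^0$ whenever $I$ is small for a long time.

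\textbf{Step 4 (conclusion).} The acyclicity theorem then gives uniform persistence: there is $\xi>0$ with $\liminf_{t\to\infty}\min_i I_i(t)\ge\xi$ for every solution in $\Omega_0$. Zhao's theorem then gives an equilibrium in $\Omega_0$, which has all $I_i>0$ and is therefore an endemic equilibrium.
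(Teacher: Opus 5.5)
Your proposal is correct and follows essentially the same route as the paper. Like the paper, you reduce the boundary dynamics to $\{I\equiv 0\}$ using $d^I_{ij}>0$, show that $\mathcal{E}^0$ repels $\Omega_0$ by comparing $\dot I$ with $\mathcal{F}-\mathcal{V}$ evaluated at slightly lowered susceptible and vaccinated levels, and then apply the acyclicity persistence theorem and the persistence-implies-equilibrium result (Zhao). The differences are small improvements rather than a new approach. Your positive left eigenvector $w$ turns the comparison into the scalar inequality $\frac{d}{dt}(wI)\ge s_\eta\, wI$. Your uniform weak-repeller formulation also justifies the isolatedness of $\mathcal{E}^0$, which the paper only asserts. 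Finally, the ``delicate point'' you flag in Step 3 is not an issue: $\limsup_{t\to\infty}\|\varphi_t(x)-\mathcal{E}^0\|<\eta$ already means the orbit is eventually within $\eta$ of $\mathcal{E}^0$, which is all your argument needs.
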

			
\begin{proof}
	Consider the state space \[\mathcal{D}=\left\lbrace(S, V, I, R)\in \Omega:S_i, V_i, I_i, R_i\geq0,i=1,2,\ldots,n \right\rbrace, \] together with the interior region \[\mathcal{D}^{\circ}=\left\lbrace(S, V, I, R)\in\mathcal{D}:I_i>0,i=1,2,\ldots,n\right\rbrace . \]
	Its persistence boundary is \(\partial\mathcal{D}^{\circ}=\mathcal{D}\setminus\mathcal{D}^{\circ}.\) The positivity of solutions ensures that both $\mathcal{D}$ and $\mathcal{D}^{\circ}$ are forward invariant under the semiflow generated by system \eqref{model}. Moreover, the previous boundedness results imply that the system possesses a compact attracting set.
	We next characterize the invariant part of the boundary. Suppose that a trajectory remains in \(\partial\mathcal{D}^{\circ}\). If there exists some patch $i_0$ and some time $t_0>0$ such that \[I_{i_0}(t_0)>0,\]
	then the positivity of the migration coefficients $d_{ij}^I$ implies that infectious individuals spread to the other patches. Indeed, for any patch $j\neq i_0$ satisfying $I_{j}(t_0)=0$, 
%	\[ \left.\frac{dI_j}{dt}\right|_{t=t_0} = \sum_{\substack{k=1,\;k\neq j}}^{n} d_{jk}^{I}I_k(t_0) \geq d_{ji_0}^{I}I_{i_0}(t_0)>0. \]
	\[\frac{dI_j(t_0)}{dt}=\sum_{\substack{k=1,\;k\neq j}}^{n}d_{jk}^{I}I_k(t_0)\geq d_{ji_0}^{I}I_{i_0}(t_0)>0.\]
	Hence, $I_j(t)>0$ for $t>t_0$, sufficiently close to $t_0.$ Repeating the argument for all patches shows that infection eventually becomes positive throughout the network, contradicting the assumption that the orbit stays on the boundary. Therefore, every invariant orbit contained in \(\partial\mathcal{D}^{\circ}\) must satisfy $I_i(t)\equiv 0,\;i=1,2,\ldots,n.$
	Consequently, the only equilibrium on the boundary is the disease-free equilibrium \[\mathcal{E}^{0}=\left( S_1^{0}, V_1^{0},0,0,\ldots,S_n^{0},V_n^{0},0,0\right) .\]
	Moreover, every orbit that remains on the boundary converges to \(\mathcal{E}^{0}\), because the corresponding disease-free subsystem has \(\mathcal{E}^{0}\) as its unique globally attracting equilibrium. We now show that $\mathcal{E}^{0}$ repels trajectories originating in $\mathcal D^{\circ}.$ Suppose, contrary to our claim, that there exists a solution $\varphi_t(x_0),\;x_0\in \mathcal D^{\circ},$ such that $\varphi_t(x_0)\to \mathcal{E}^{0}$ as $t\to\infty.$ Then for sufficiently small $\Psi>0,$ there exists $T>0$ such that
	\[S_i(t)>S_i^{0}-\Psi,\qquad V_i(t)>V_i^{0}-\Psi,\qquad t>T. \]
	Using these estimates in the infected subsystem yields	
\begin{align*}
	\dot{I_i}\geq& \beta_i\left[ S_i^{0}+(1-\epsilon_i)V_i^{0}-\Psi(2-\epsilon_i)\right]\left(I_i+\sum_{\substack{j=1,\;j\neq i}}^{n}\rho_{ij}I_j \right)\\
	&-(\gamma_i+\delta_i+\mu_i)I_i+\sum_{\substack{j=1,\;j\neq i}}^{n} \left( d_{ij}^II_j-d_{ji}^{I}I_i.\right) 
\end{align*}
	This inequality may be written in matrix form as
	\[\dot I\geq(\mathcal F-\mathcal V-\Psi\mathcal C)I,\] where $\mathcal C$ is a nonnegative matrix. Since $\mathcal{R}_0>1,$ we have $s(\mathcal F-\mathcal V)>0.$ By continuity of the spectral bound, one can choose $\Psi>0$ sufficiently small such that
	\(s(\mathcal F-\mathcal V-\Psi\mathcal C)>0.\) For sufficiently small \(\Psi\), the matrix \(\mathcal F-\mathcal V-\Psi\mathcal C\) is irreducible and Metzler. Since \(I(T)\gg0\), the comparison principle implies that \(I(t)\) is bounded below by a positive exponentially growing solution of the corresponding comparison system. Therefore, \(I(t)\) cannot converge to zero. This contradicts the assumption that the trajectory approaches $\mathcal{E}^{0}.$ Hence
	\[W^s(\mathcal{E}^{0})\cap\mathcal{D}^{\circ}=\emptyset,\] where $W^s(\mathcal{E}^{0})$ denotes the stable set of $\mathcal{E}^{0}.$ Since \(\mathcal{E}^{0}\) is isolated, every invariant boundary orbit converges to it, and no orbit starting in \(\mathcal{D}^{\circ}\) approaches it, the persistence theorem of \cite{Wang2004} guarantees that the system is uniformly persistent. Consequently, there exists a constant \(\xi>0\) such that \[\liminf_{t\to\infty}I_i(t)\geq \xi,\qquad i=1,2,\ldots,n.\]
		
	Furthermore, the uniform persistence property and ultimate boundedness of the system imply the existence of at least one endemic equilibrium
	\begin{align}
		\mathcal{E}^*=\left(S_1^*, V_1^*,I_1^*,R_1^*,\ldots, S_n^*, V_n^*,I_n^*,R_n^*\right), 
	\end{align}
	with \[I_i^*>0,\qquad i=1,2,\ldots,n.\]
	This completes the proof.
\end{proof}

	\section{Numerical Analysis and Discussion}\label{numerical}
		Given the substantial geographical variation in the outbreak, the numerical analysis proceeds in two stages to distinguish locally driven transmission from movement-mediated effects.
	In the first stage, the eight administrative divisions of Bangladesh--Dhaka, Chattogram, Rajshahi, Barishal, Mymensingh, Rangpur, Khulna, and Sylhet--are treated independently by setting the compartment-specific movement rates, \(d_{ij}^X, X\in \left\lbrace S, V, I, R\right\rbrace \) and cross-patch infection coefficients, $\rho_{ij},$ to zero. In the second stage, the calibrated divisional models are connected within a multipatch framework to assess  how interdivisional connectivity alters epidemic peaks, peak timing, cumulative infections, and division-specific disease burden. We begin with the division-specific analysis, for which figures ~\ref{division1} and ~\ref{division2} present the observed vaccination and measles dynamics together with the corresponding model fits.\\
	For each division, the SVIR model (\ref{model}) is fitted to the observed daily measles cases using bounded nonlinear least squares. Daily vaccination data are smoothed using a seven day moving average and normalized to construct the time-dependent vaccination rate, $\phi(t),$ with $\alpha$ estimated as its scaling parameter. The parameters $\beta, \delta, \epsilon,$ and $\alpha$ are estimated, while the remaining parameters are fixed at their specified values (see table~\ref{tab1}). 
	Parameter uncertainty is assessed using 1000 parametric bootstrap replicates. In each replicate, a synthetic case series is generated from a negative-binomial distribution based on the fitted case trajectory, and the model parameters are re-estimated. The DFE and corresponding reproduction number are then recalculated for each valid replicate. The 95\% confidence intervals are determined from the 2.5th and 97.5th percentiles of the resulting bootstrap distributions. \\
	Figures ~\ref{division1} and ~\ref{division2} present the division-specific vaccination patterns, observed and fitted daily measles cases, and residuals obtained without spatial coupling. Across all eight divisions, vaccination activity increases rapidly during the second half of April, peaks between late April and early May, and declines markedly by mid-May. The seven day smoothed daily vaccination series peaks at approximately $2.6-2.8\times10^5$ in Dhaka and Chattogram (figure~\ref{division1}a,b), $1.43\times10^5$ in Rangpur (figure~\ref{division2}b), $1.0-1.3\times10^5$ in Rajshahi, Khulna (figure~\ref{division1}c,d), Mymensingh, and Sylhet (figure~\ref{division2}c,d), and $5.91\times10^4$ in Barishal (figure~\ref{division2}a). Nevertheless, the measles patterns remain heterogeneous. Dhaka records the highest burden, reaching approximately 200 daily cases in May, while Chattogram shows an increasing trend from June onward and peaks at about 60 cases. Rajshahi shows substantial early activity followed by later peaks approaching 30 cases. Smaller late peaks occur in Barishal, Sylhet, Rangpur, and Khulna, reaching approximately 35, 22, 21, and 16 daily cases, respectively, whereas Mymensingh generally remains below 15 cases per day. Thus, despite the broadly synchronized vaccination campaign, the magnitude, timing, and persistence of measles transmission vary substantially among divisions.
	
	\begin{figure}[H]
		\centering
		
		\begin{subfigure}[t]{0.48\textwidth}
			\centering
			\includegraphics[width=\linewidth]{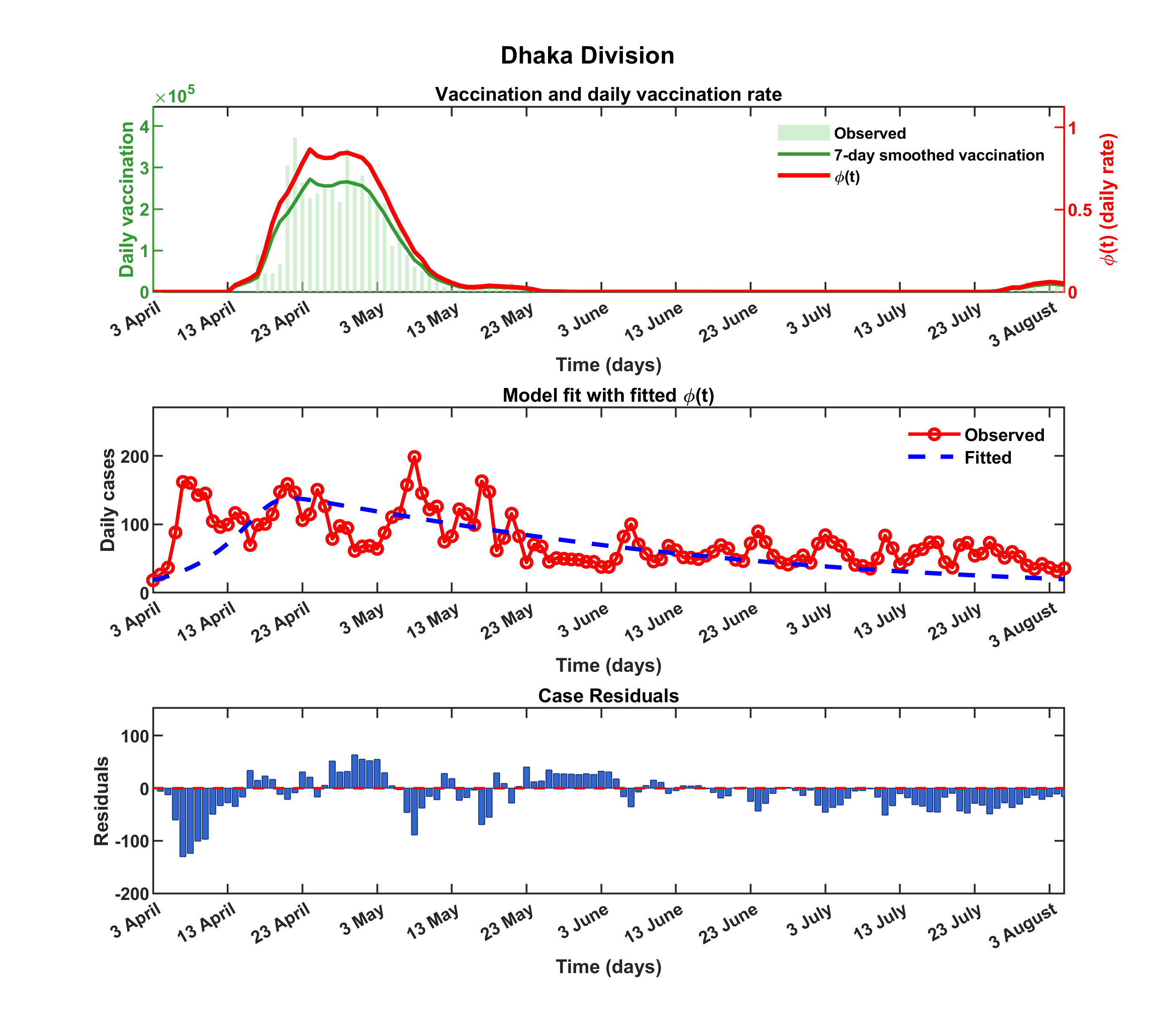}
			\caption{Dhaka Division}
			\label{dhaka}
		\end{subfigure}
		\hfill
		\begin{subfigure}[t]{0.48\textwidth}
			\centering
			\includegraphics[width=\linewidth]{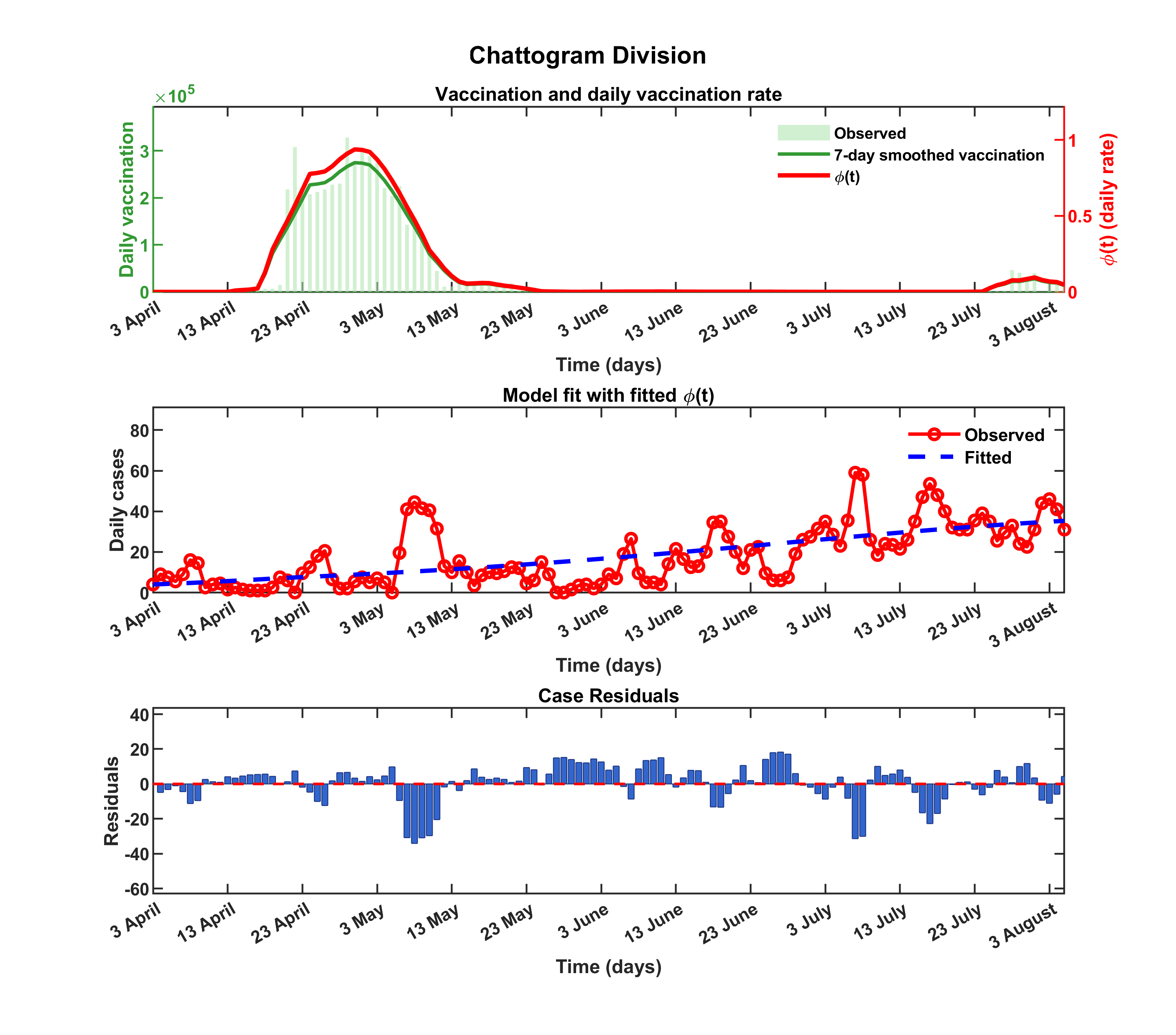}
			\caption{Chattogram Division}
			\label{chattogram}
		\end{subfigure}
		
		\vspace{0.4cm}
		
		% Second row
		\begin{subfigure}[t]{0.48\textwidth}
			\centering
			\includegraphics[width=\linewidth]{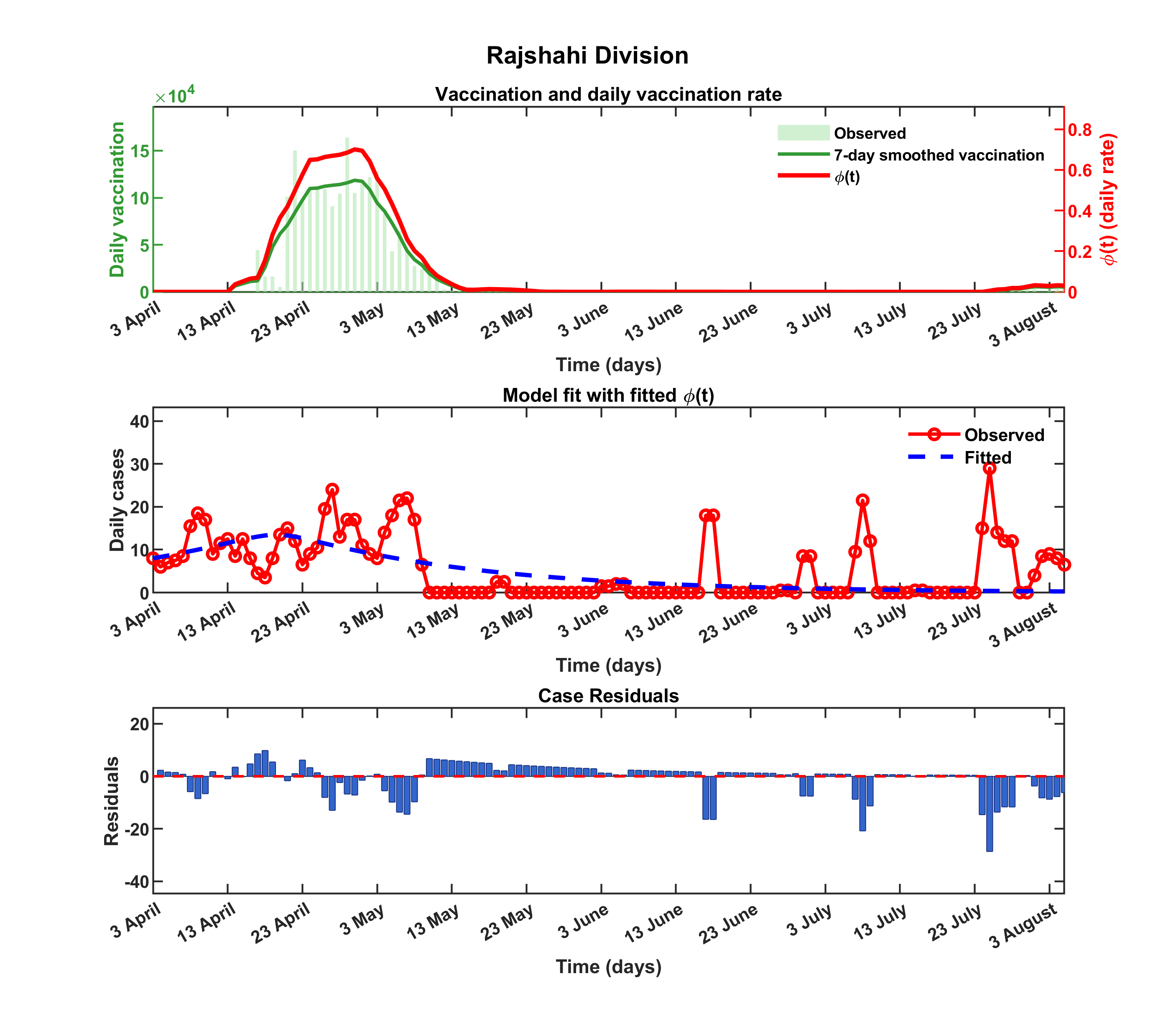}
			\caption{Rajshahi Division}
			\label{rajshahi}
		\end{subfigure}
		\hfill
		\begin{subfigure}[t]{0.48\textwidth}
			\centering
			\includegraphics[width=\linewidth]{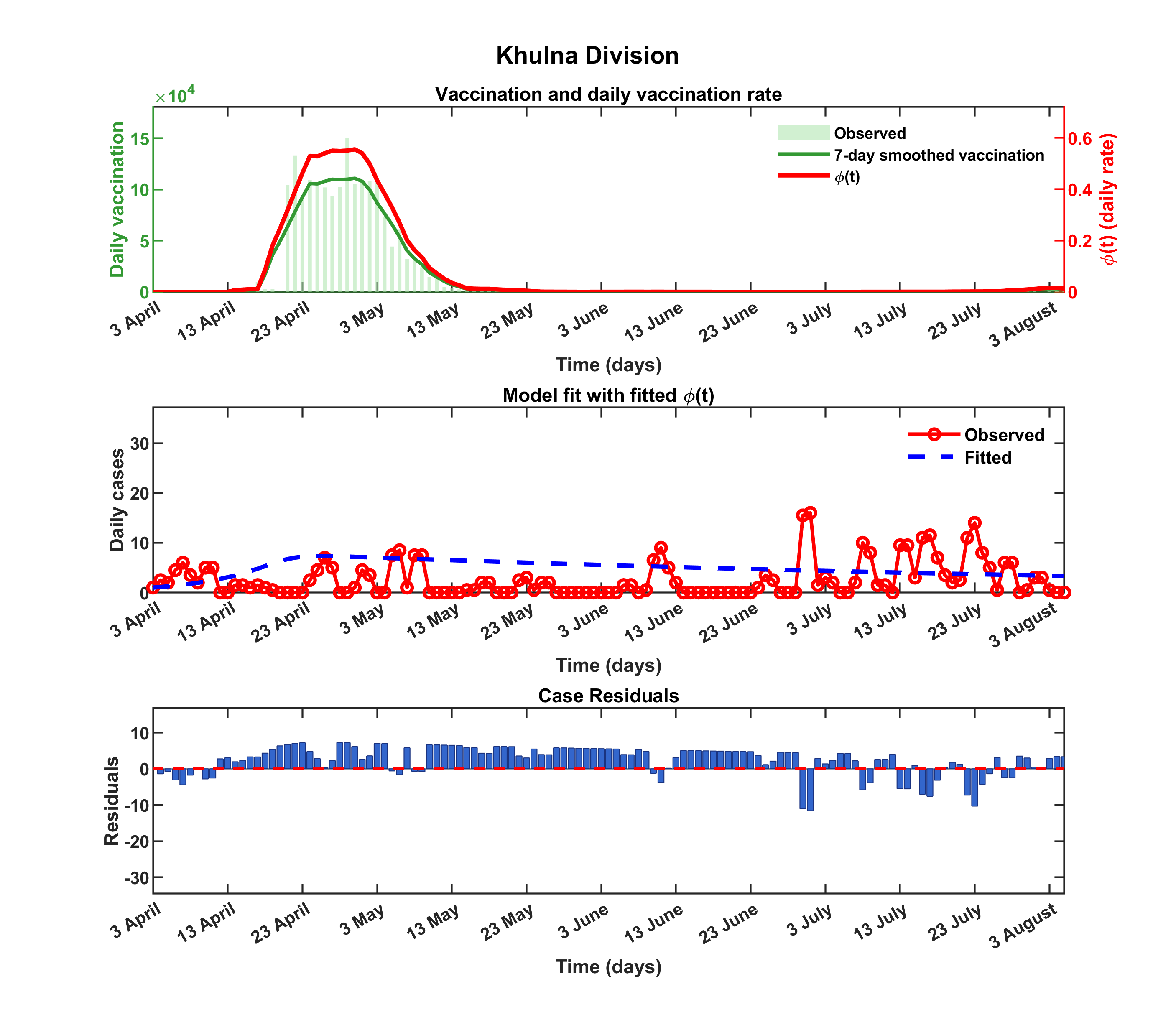}
			\caption{Khulna Division}
			\label{khulna}
		\end{subfigure}
		
		\caption{Division-specific measles dynamics and model-fitting results:
			(a) Dhaka, (b) Chattogram, (c) Rajshahi, and (d) Khulna.}
		\label{division1}
	\end{figure}
	
	The fitted trajectories reproduce the principal division-specific trends but remain smoother than the observed daily series and therefore do not fully capture isolated peaks and short-term case clusters. The residual deviations are generally moderate but become larger during the early high-burden period in Dhaka, the May and July peaks in Chattogram, and the later increases in Rajshahi, Barishal, Rangpur, and Sylhet. These short-term deviations may reflect localized transmission clusters, reporting delays, retrospective case entry, or other processes not explicitly represented in the deterministic model.\\
	The estimated local reproduction numbers show substantial geographical variation, ranging from 1.1386 in Khulna to 4.3492 in Dhaka (table~\ref{tab1}b). All point estimates exceed one, indicating the potential for sustained local transmission. The division-specific reproduction number estimates and their 95\% confidence intervals are reported in (table~\ref{tab1}b).
	\begin{figure}[H]
		\centering
		
		% First row
		\begin{subfigure}[t]{0.48\textwidth}
			\centering
			\includegraphics[width=\linewidth]{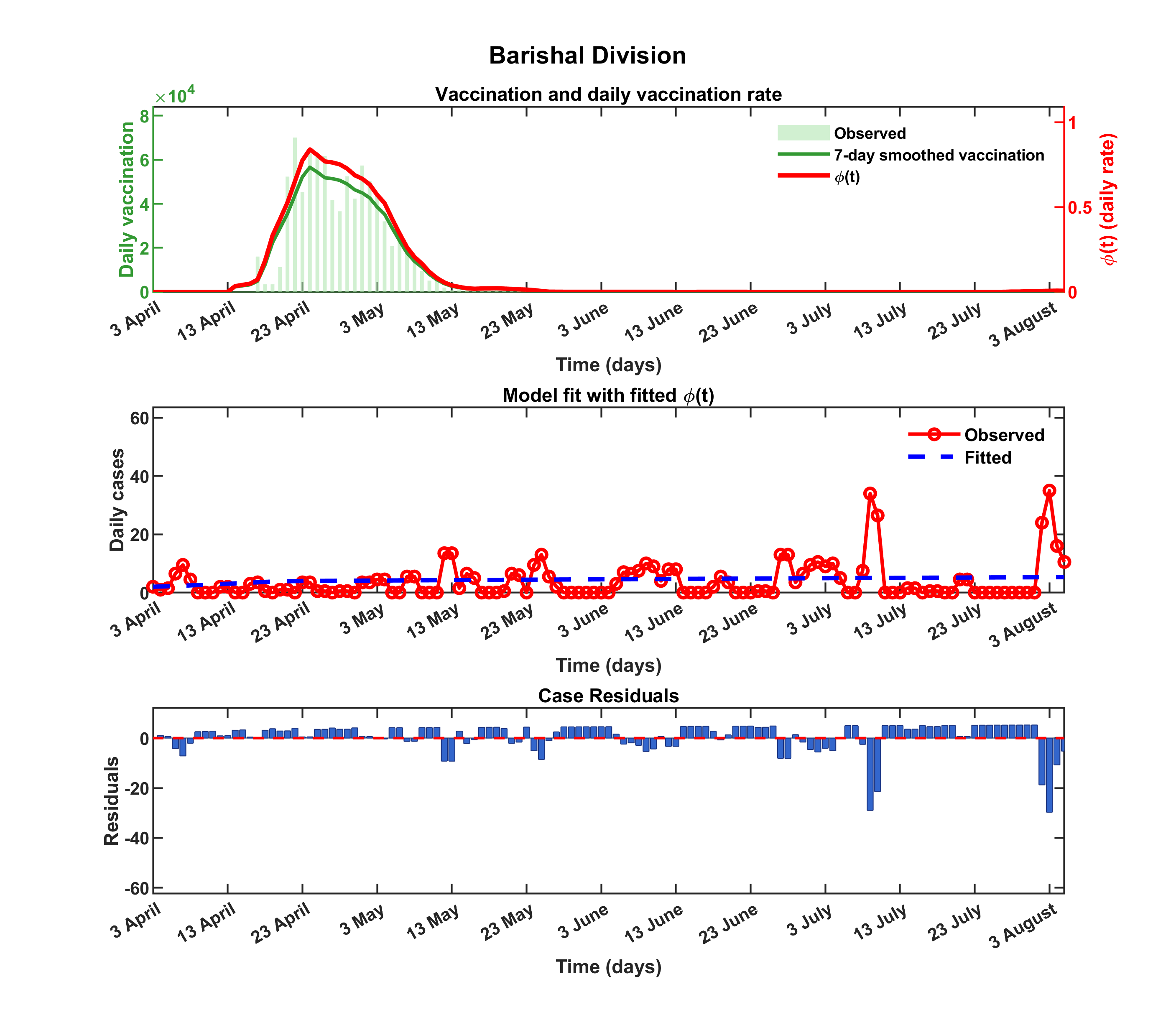}
			\caption{Barishal Division}
			\label{barishal}
		\end{subfigure}
		\hfill
		\begin{subfigure}[t]{0.48\textwidth}
			\centering
			\includegraphics[width=\linewidth]{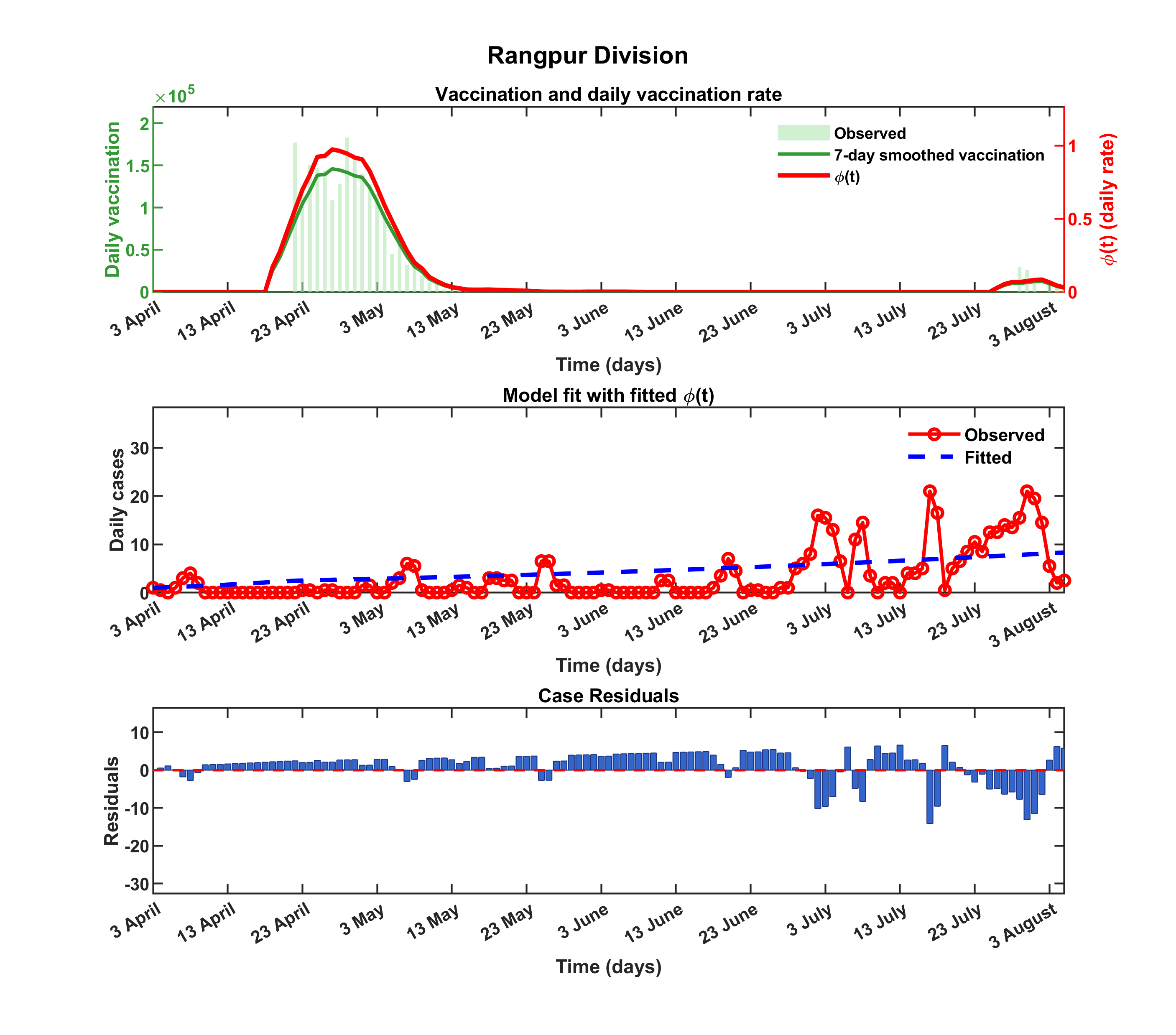}
			\caption{Rangpur Division}
			\label{rangpur}
		\end{subfigure}
		
		\vspace{0.4cm}
		
		% Second row
		\begin{subfigure}[t]{0.48\textwidth}
			\centering
			\includegraphics[width=\linewidth]{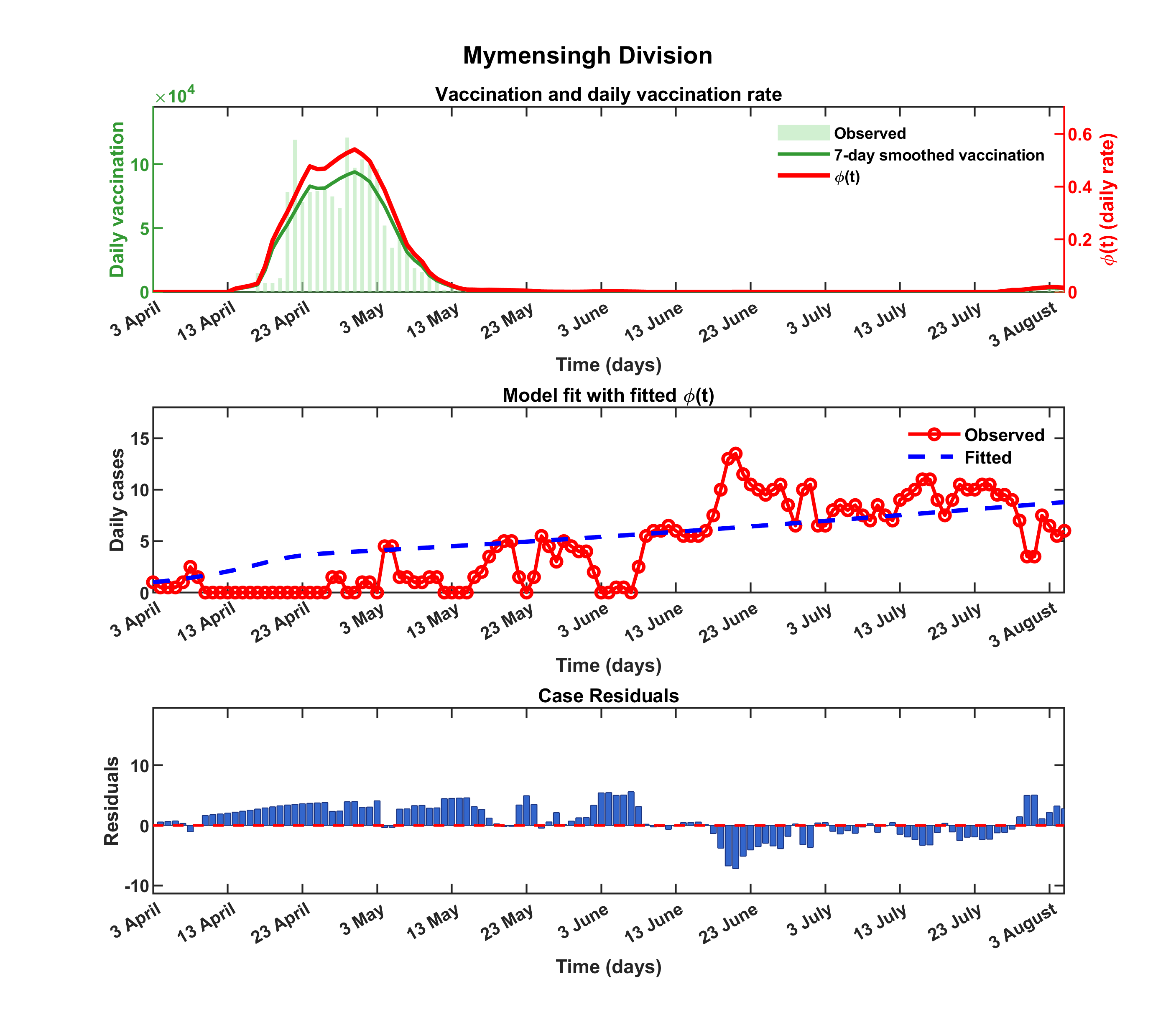}
			\caption{Mymensingh Division}
			\label{mymensingh}
		\end{subfigure}
		\hfill
		\begin{subfigure}[t]{0.48\textwidth}
			\centering
			\includegraphics[width=\linewidth]{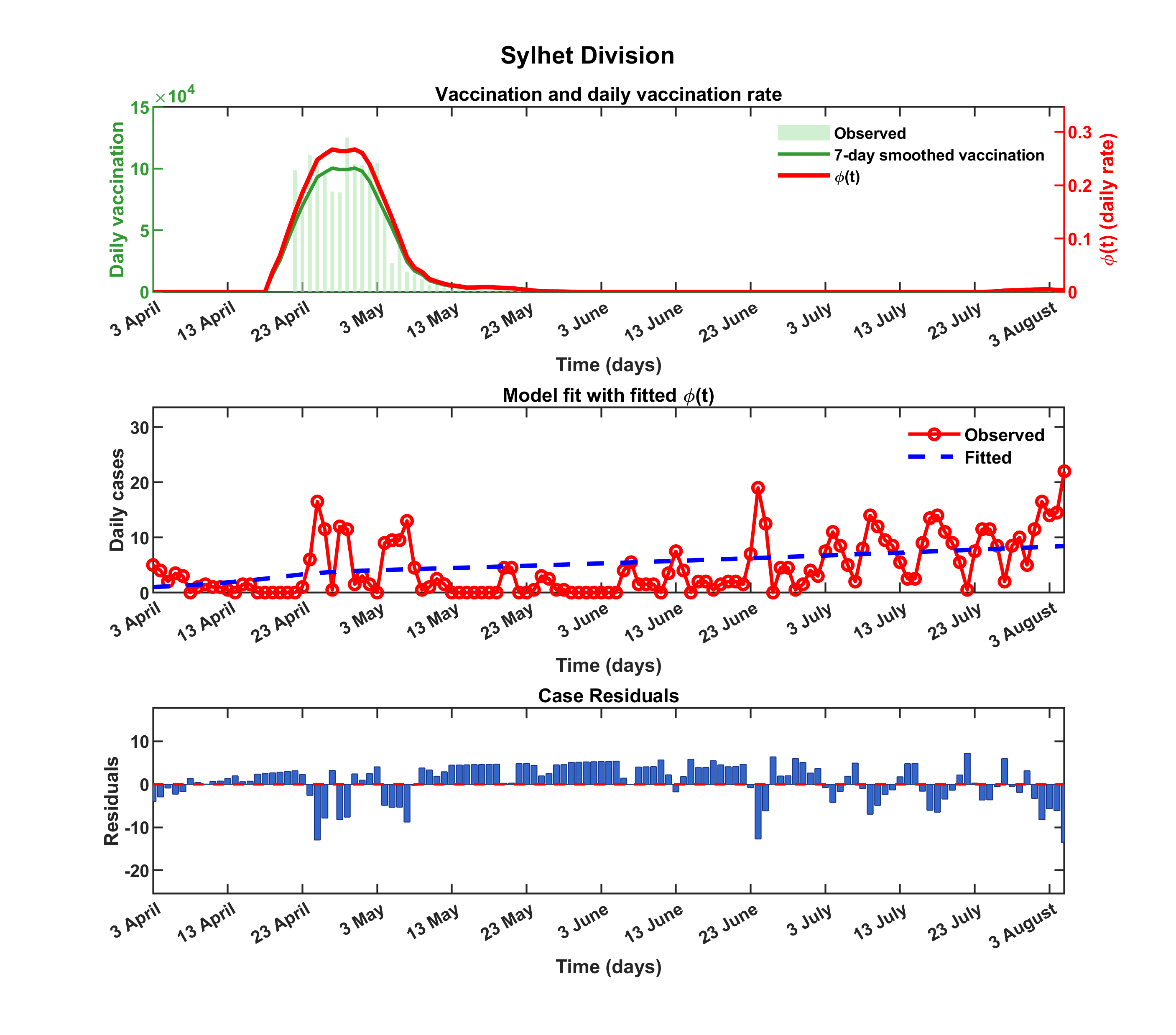}
			\caption{Sylhet Division}
			\label{sylhet}
		\end{subfigure}
		
		\caption{Division-specific measles dynamics and model-fitting results:
			(a) Barishal, (b) Rangpur, (c) Mymensingh, and (d) Sylhet.}
		\label{division2}
	\end{figure}
	
	 As local transmission alone may not fully explain the observed geographical patterns, we next incorporate interdivisional movement and cross-patch infection into the calibrated division-specific models. The fitted epidemiological parameters, initial conditions, and time-dependent vaccination rates are retained for each division, while spatial connections are introduced to represent interactions among the eight divisions.\\
	 A relative connectivity matrix, $Q,$ is constructed using divisional populations, geographical distances, and the greater receiving and outgoing activity of Dhaka and Chattogram as major national hubs (table~\ref{tab1}, table~\ref{tab2}). Under the adopted convention, $Q(i, j)$ represents connectivity from source division $j$ to receiving division $i.$ In the absence of direct compartment-specific mobility and interdivisional contact data, the compartment-specific movement matrices are assumed to be 
	 \begin{align}
	 	d^S=0.003\,Q, d^V=0.0025\,Q, d^I=0.0008\,Q, d^R=0.002\,Q,
     \end{align} 
 describing the physical transfer of susceptible, vaccinated, infectious, and recovered individuals, respectively. This formulation follows the metapopulation framework of Wang et al. \cite{Wang2012}, in which mobility is incorporated separately across epidemiological states, and is further motivated by the observation that distinct subpopulations may exhibit heterogeneous mobility patterns \cite{apolloni}. The assigned scaling coefficients are model assumptions representing compartment-specific differences in mobility, with a lower movement rate assigned to infectious individuals \((d^I)\) to reflect the expected reduction in mobility during illness. In contrast, the additional infection pressure exerted by infectious individuals in source division $j$ on receiving division $i$ is modeled as
	 \begin{align}
	  \rho_{ij}=0.08\,Q(i,j)\left( \frac{\mathcal{R}_0^j}{\displaystyle\max_{k} \mathcal{R}_0^k}
	  \right).
	  \end{align} 
	  This term captures the coupling between spatial connectivity and the transmissibility of the source patch; the force of infection imported from \(j\) into \(i\) scales with the mobility link \(Q(i, j)\) and is modulated by the relative reproduction number of the source division, \(\mathcal{R}_0^j/\displaystyle\max_{k} \mathcal{R}_0^k\). This structure is motivated by metapopulation studies showing that spatial connectivity, mobility, and local reproduction numbers jointly influence epidemic spread across interconnected communities \cite{Lieberthal}. The movement and infection-influence coefficients are treated as baseline modeling assumptions and are varied across the restriction and movement-increase scenarios to assess the robustness of the simulated outcomes.
	  	\begin{figure}[H]
	  	\centering
	  	
	  	% First row
	  	\begin{subfigure}[t]{0.49\textwidth}
	  		\centering
	  		\includegraphics[width=\linewidth]{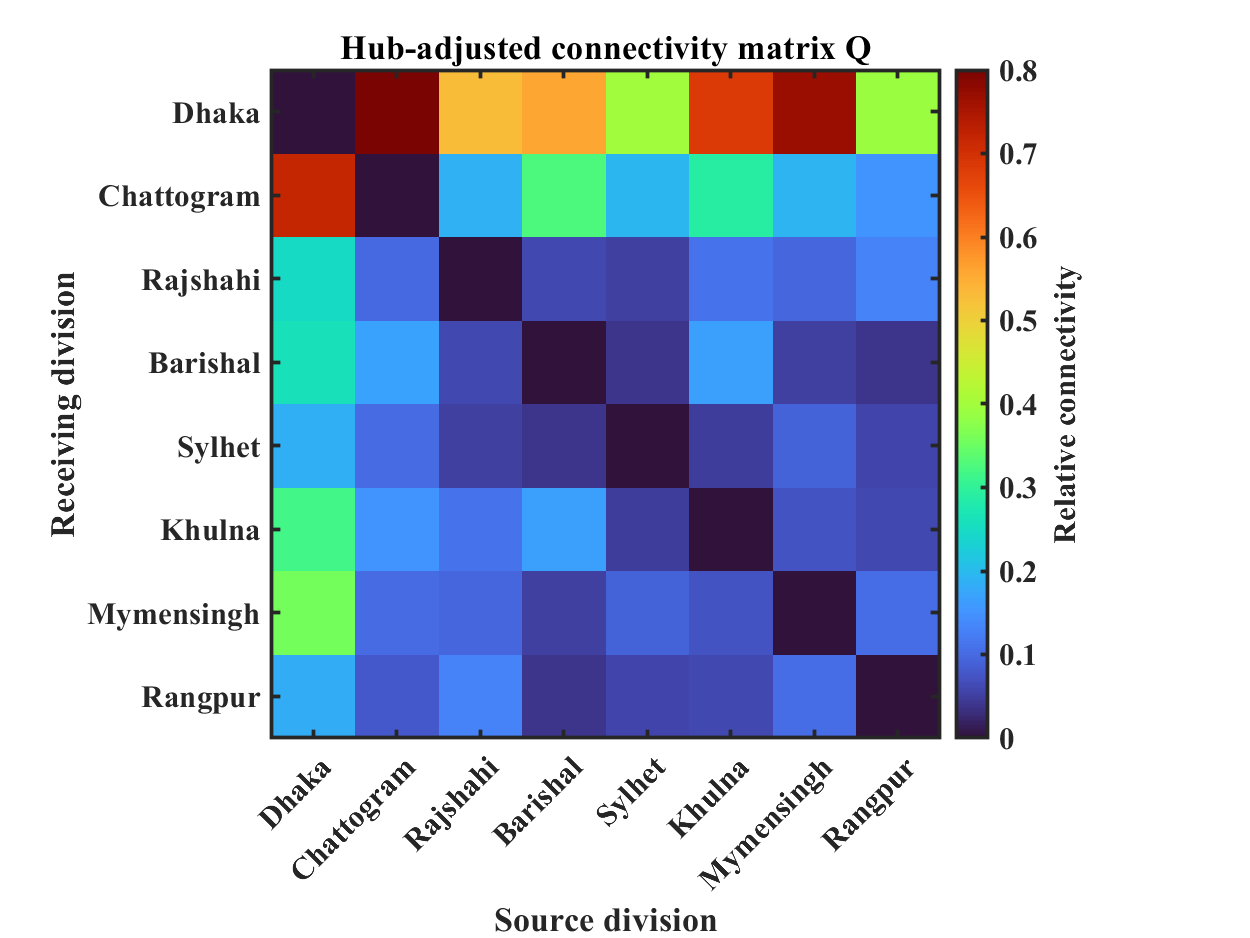}
	  		\caption{}
	  		\label{Q}
	  	\end{subfigure}
	  	\hspace{0.0008\textwidth}
	  	\begin{subfigure}[t]{0.49\textwidth}
	  		\centering
	  		\includegraphics[width=\linewidth]{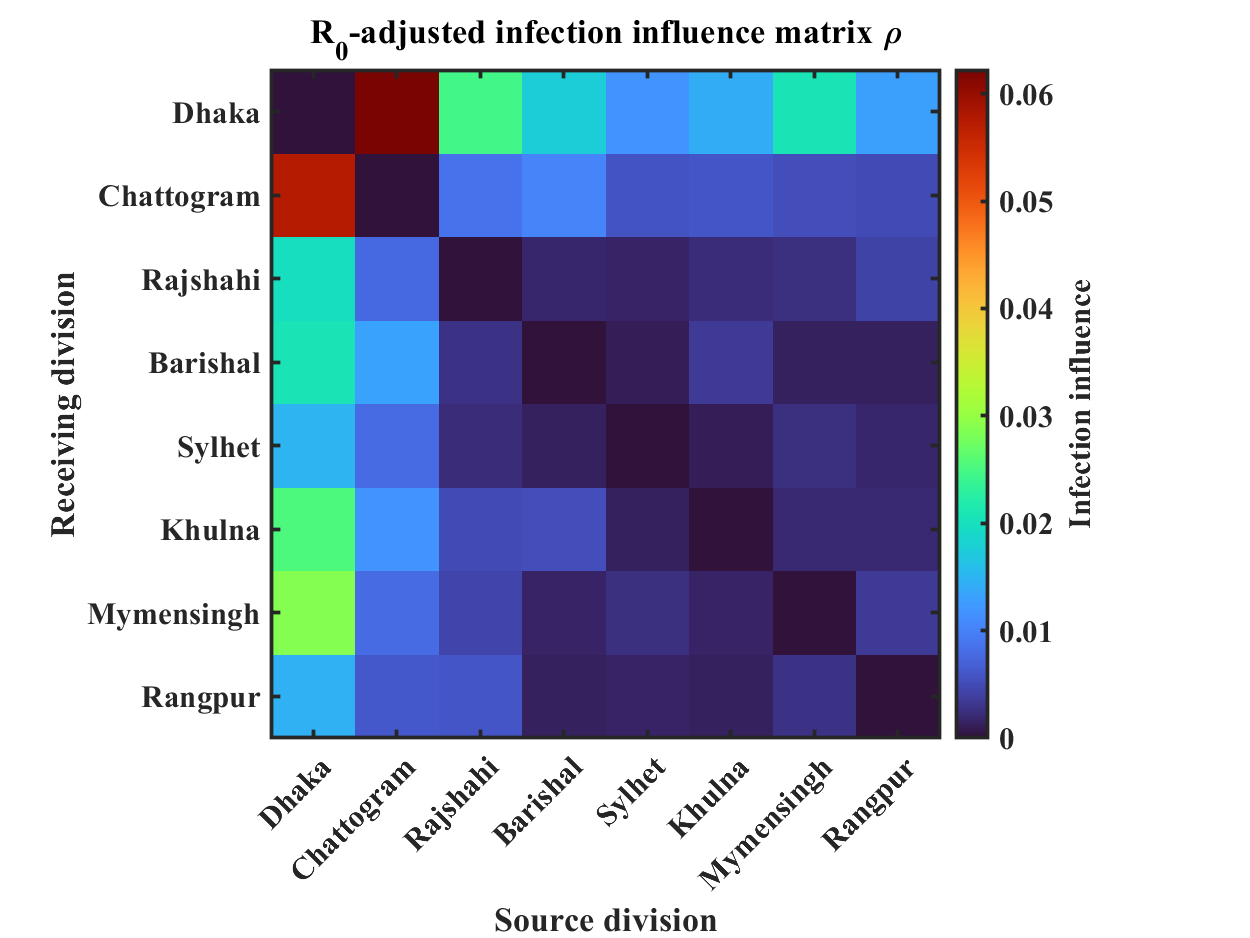}
	  		\caption{}
	  		\label{rho}
	  	\end{subfigure}
	  	
	  	\caption{Model-derived spatial coupling among the eight divisions. (a) Hub-adjusted relative connectivity matrix \(Q\) (table~\ref{tabq}); (b) reproduction number adjusted cross-patch infection influence matrix \(\rho\) (table~\ref{tabr}). Row represent receiving divisions and columns represent source divisions.}
	\label{Q_rho}
	\end{figure}
	Applying this parameterization produces the hub-oriented spatial structure shown in figure~\ref{Q_rho}. As expected from the higher hub weights assigned to Dhaka and Chattogram, the strongest directional connections occur from Chattogram to Dhaka ($Q=0.8$), Mymensingh to Dhaka ($Q=0.769$), Dhaka to Chattogram ($Q=0.718$), and Khulna to Dhaka ($Q=0.682$) (table~\ref{tabq}, figure~\ref{Q_rho}a). Because $Q$ is directional, connectivity between two divisions is not necessarily equal in both directions. After adjustment for the source-specific reproduction number, the greatest cross-patch infection influences occur from Chattogram to Dhaka ($\rho=0.0622$) and from Dhaka to Chattogram ($\rho=0.0574$) (table~\ref{tabr}, figure~\ref{Q_rho}b). The constructed network therefore identifies Dhaka and Chattogram as the principal modelled transmission corridor, while the other divisions maintain weaker but nonzero connections with these major hubs.
	\begin{figure}[H]
		\centering
		
		% First row
		\begin{subfigure}[t]{0.49\textwidth}
			\centering
			\includegraphics[width=\linewidth]{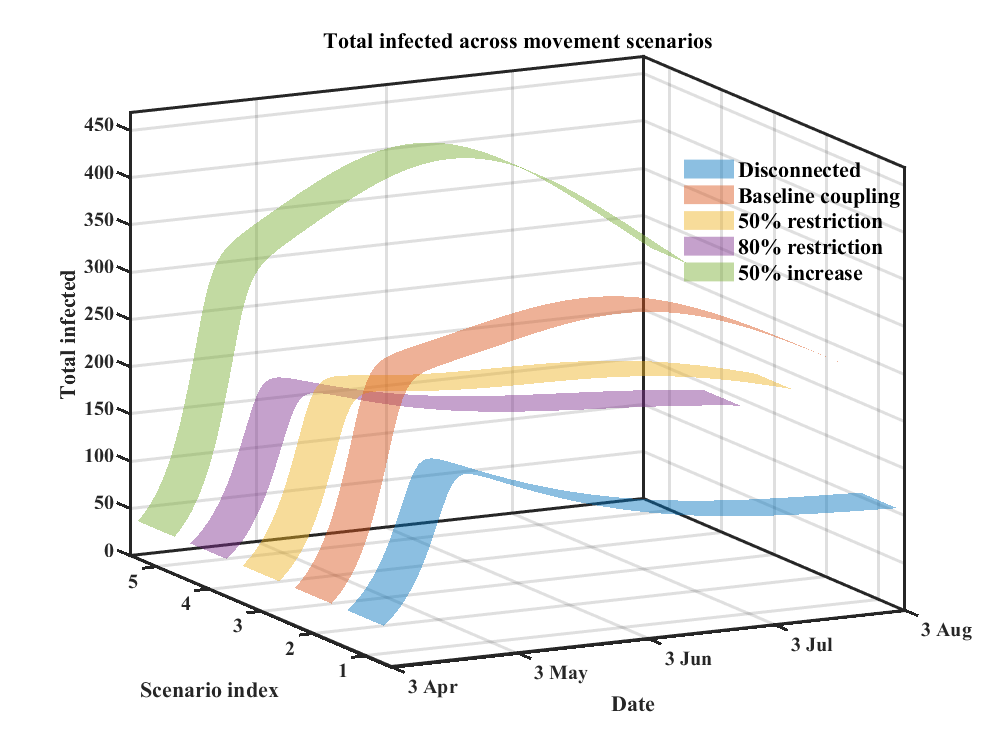}
			\caption{}
			\label{ribbon}
		\end{subfigure}
		%\hspace{0.008\textwidth}
		\begin{subfigure}[t]{0.5\textwidth}
			\centering
			\includegraphics[width=\linewidth]{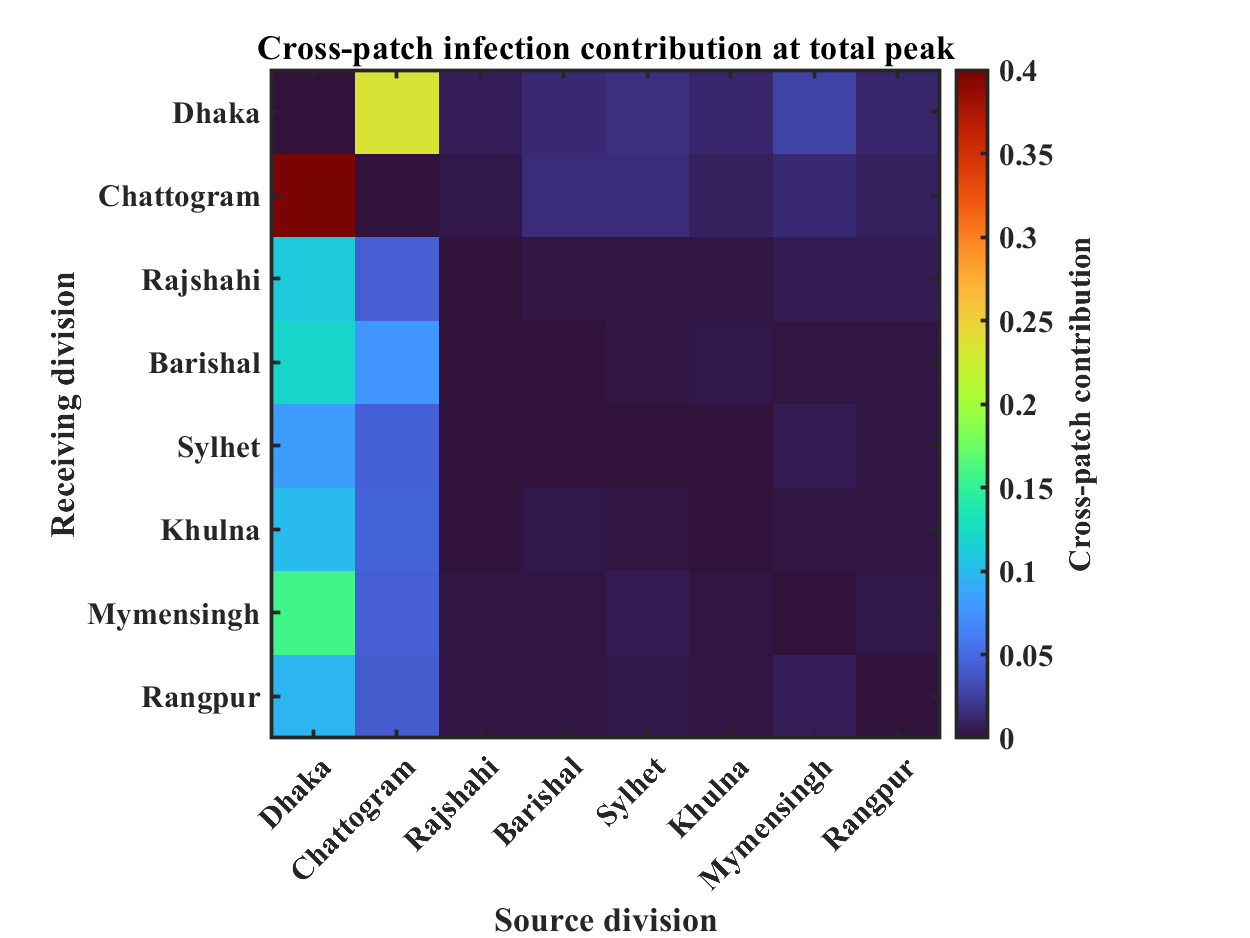}
			\caption{}
			\label{cross}
		\end{subfigure}
		
		\caption{Effects of spatial coupling and cross-patch infection across the eight divisions. (a) Three-dimensional ribbon representation of the total infectious population under no spatial coupling, baseline coupling, 50\% restriction, 80\% restriction, and 50\% increased-coupling scenarios. (b) Directional cross-patch infection contributions at the baseline network-wide epidemic peak; rows represent receiving divisions, columns represent source divisions, and colour intensity indicates the contribution from each source to infection in each receiving division.}
		\label{totalinf}
	\end{figure}
	The simulated epidemic trajectories differ markedly among the movement scenarios (figure~\ref{totalinf}a). Under baseline coupling, the total infectious population reaches a peak of 313.20 on simulation day 69, compared with 190.96 on day 19 in the disconnected system. Baseline coupling therefore increases the epidemic peak by approximately 64.02\% relative to the disconnected case and delays the peak by 50 days. It also increases the final infectious population from 104.49 to 232.86 and the cumulative infections from 1086.42 to 2345.35 (table~\ref{scenarios}). Movement restrictions substantially reduce the simulated burden. Relative to baseline coupling, a 50\% restriction decreases the epidemic peak by 26.69\%, the final infectious population by 21.37\%, and cumulative infections by 25.07\%. An 80\% restriction produces larger reductions of 34.37\%, 38.87\%, and 41.48\%, respectively (table~\ref{tab7}). Conversely, increasing movement and cross-patch infection influence by 50\% raises the peak to 407.59, representing a 30.14\% increase, and increases cumulative infections by 23.54\% to 2897.36. Although the peak occurs slightly earlier, shifting from day 69 to day 65, stronger connectivity produces a larger epidemic and a higher infection burden throughout the study period.
	\begin{table}[H]
		\centering
		\caption{Summary of epidemic outcomes under alternative movement and
			cross-patch infection scenarios.}
		\label{scenarios}
		
		\begin{threeparttable}
			\small
			\renewcommand{\arraystretch}{1}
			\setlength{\tabcolsep}{0.8pt}
			
			\resizebox{0.92\textwidth}{!}{%
			
			\begin{tabular}{
					>{\raggedright\arraybackslash\bfseries}p{2.5cm}
					>{\centering\arraybackslash}p{2.65cm}
					>{\centering\arraybackslash}p{2.5cm}
					>{\centering\arraybackslash}p{2.75cm}
					>{\centering\arraybackslash}p{2.20cm}
					>{\centering\arraybackslash}p{2.25cm}
				}
				
				\toprule
				
				\rowcolor{ScenarioBlue}
				\textcolor{white}{\textbf{Scenario}}
				&
				\multicolumn{2}{c}{
					\cellcolor{ScenarioBlue}
					\textcolor{white}{\textbf{Epidemic peak}}
				}
				&
				\multicolumn{2}{c}{
					\cellcolor{ScenarioBlue}
					\textcolor{white}{\textbf{Epidemic outcomes}}
				}
				&
				\cellcolor{ScenarioBlue}
				\textcolor{white}{\textbf{Relative effect}}
				\\
				
				\rowcolor{SubheaderBlue}
				{}
				&
				\raisebox{2ex}{%
					\makecell[c]{
						\textcolor{ScenarioBlue}{\textbf{Peak infectious}}\\
						\textcolor{ScenarioBlue}{\textbf{population}}\\
						\textcolor{ScenarioBlue}{\textbf{(individuals)}}
					}
				}
				&
				\raisebox{2ex}{%
					\makecell[c]{
						\textcolor{ScenarioBlue}{\textbf{Peak day}}\\
						\textcolor{ScenarioBlue}{\textbf{from 3 April}}\\
						\textcolor{ScenarioBlue}{\textbf{(days)}}
					}
				}
				&
				\raisebox{2ex}{%
					\makecell[c]{
						\textcolor{ScenarioBlue}{\textbf{Final infectious}}\\
						\textcolor{ScenarioBlue}{\textbf{population}}\\
						\textcolor{ScenarioBlue}{\textbf{(individuals)}}
					}
				}
				&
				\raisebox{2ex}{%
					\makecell[c]{
						\textcolor{ScenarioBlue}{\textbf{Cumulative}}\\
						\textcolor{ScenarioBlue}{\textbf{infections}}\\
						\textcolor{ScenarioBlue}{\textbf{(individuals)}}
					}
				}
				&
				\raisebox{2ex}{%    
				    \makecell[c]{
					    \textcolor{ScenarioBlue}{\textbf{Peak change}}\\
					    \textcolor{ScenarioBlue}{\textbf{vs.\ baseline}}\\
					    \textcolor{ScenarioBlue}{\textbf{(\%)}}
				    }
				}
				\\
				\midrule
				
				Disconnected
				& 190.96
				& 19
				& 104.49
				& 1086.42
				& \textcolor{ReductionGreen}{\(-39.03\)} \\
				
				\rowcolor{BaselineFill}
				Baseline coupling
				& \textbf{313.20}
				& \textbf{69}
				& \textbf{232.86}
				& \textbf{2345.35}
				& \textcolor{ScenarioBlue}{\textbf{Reference}} \\
				
				50\% restriction
				& 229.62
				& 22
				& 183.11
				& 1757.30
				& \textcolor{ReductionGreen}{\(-26.69\)} \\
				
				\rowcolor{AlternateRow}
				80\% restriction
				& 205.56
				& 20
				& 142.34
				& 1372.59
				& \textcolor{ReductionGreen}{\(-34.37\)} \\
				
				50\% increase
				& \textbf{407.59}
				& 65
				& \textbf{268.82}
				& \textbf{2897.36}
				& \textcolor{IncreaseRed}{\(\boldsymbol{+30.14}\)} \\
				
				\bottomrule
			\end{tabular}%
		}
			
			\begin{tablenotes}[flushleft]
				\footnotesize
				\item \textbf{\textit{Note~\ref{scenarios}:}} The no-spatial-coupling scenario excludes both
				interdivisional movement and cross-patch infection. Restrictions and increases are applied simultaneously to the movement and cross-patch infection coefficients relative to baseline coupling. Peak day is measured from 3 April, while final infectious population and cumulative
				infections are evaluated on 3 August. Values are continuous model estimates and may therefore be non-integer. Bold values in the final row identify the largest peak, final infectious population, and cumulative infection burden.
			\end{tablenotes}
			
		\end{threeparttable}
	\end{table}

The directional contribution pattern in figure~\ref{totalinf}b indicates that cross-patch infection is concentrated primarily along pathways originating from Dhaka and Chattogram. The pairwise estimates in table~\ref{tab5} identify the Dhaka-to-Chattogram pathway as the strongest contribution (0.4000 infections per day), followed by transmission from Chattogram to Dhaka (0.2342 infections per day); Dhaka also provides the largest external contribution to each of the other six divisions.
%%%%%%%%%%%%%%%%%%%%%%%%%%%%%%%%%%%%%%%%%%%%%%%%%%%%%%%%%%%%%%%%%%%%%%%%%%%
\begin{table}[H]
	\centering
	\caption{Division-specific local and cross-patch infection attribution at the baseline total epidemic peak.}
	\label{tab8}
	
	\renewcommand{\arraystretch}{1.12}
	\setlength{\tabcolsep}{3pt}
	
	\resizebox{\textwidth}{!}{%
		\begin{tabular}{lcccccc}
			\toprule
			
			\rowcolor{TableBlue}
			\textcolor{white}{\textbf{Division}}
			&
			\multicolumn{2}{c}{
				\cellcolor{TableBlue}
				\textcolor{white}{\textbf{Epidemic state}}
			}
			&
			\multicolumn{4}{c}{
				\cellcolor{TableBlue}
				\textcolor{white}{\textbf{Attribution of new infections}}
			}\\
			
			\rowcolor{HeaderFill}
			{}
			&
			\makecell[c]{
				\textcolor{TableBlue}{\textbf{Infectious}}\\
				\textcolor{TableBlue}{\textbf{population}}\\
				\textcolor{TableBlue}{\(\boldsymbol{I_{\mathrm{peak}}}\)}\\
				\textcolor{TableBlue}{\textbf{(individuals)}}
			}
			&
			\makecell[c]{
				\textcolor{TableBlue}{\textbf{Total force}}\\
				\textcolor{TableBlue}{\textbf{of infection}}\\
				\textcolor{TableBlue}{\(\boldsymbol{\lambda_i}\)}\\
				\textcolor{TableBlue}{\textbf{(day\(^{-1}\))}}
			}
			&
			\makecell[c]{
				\textcolor{TableBlue}{\textbf{Local new}}\\
				\textcolor{TableBlue}{\textbf{infections}}\\
				\textcolor{TableBlue}{\textbf{(individuals}}\\
				\textcolor{TableBlue}{\textbf{day\(^{-1}\))}}
			}
			&
			\makecell[c]{
				\textcolor{TableBlue}{\textbf{Cross-patch}}\\
				\textcolor{TableBlue}{\textbf{new infections}}\\
				\textcolor{TableBlue}{\textbf{(individuals}}\\
				\textcolor{TableBlue}{\textbf{day\(^{-1}\))}}
			}
			&
			\makecell[c]{
				\textcolor{TableBlue}{\textbf{Local}}\\
				\textcolor{TableBlue}{\textbf{share}}\\
				\textcolor{TableBlue}{\textbf{(\%)}}
			}
			&
			\makecell[c]{
				\textcolor{TableBlue}{\textbf{Cross-patch}}\\
				\textcolor{TableBlue}{\textbf{share}}\\
				\textcolor{TableBlue}{\textbf{(\%)}}
			}
			\\
			\midrule
			
			\textbf{Dhaka}
			& 90.3831 & \textbf{0.01626026} & 3.726334 & 0.320884
			& 92.0715 & 7.9285\\
			
			\textbf{Chattogram}
			& \textbf{91.3132} & 0.00973413
			& \textbf{7.036637} & \textbf{0.464533}
			& \textbf{93.8072} & 6.1928\\
			
			\textbf{Rajshahi}
			& 6.8711 & 0.00172993 & 0.426159 & 0.172610
			& 71.1725 & \textbf{28.8275}\\
			
			\textbf{Barishal}
			& 19.2807 & 0.00214293 & 1.232822 & 0.210105
			& 85.4389 & 14.5611\\
			
			\textbf{Sylhet}
			& 33.1514 & 0.00413826 & 2.033620 & 0.137566
			& 93.6640 & 6.3360\\
			
			\textbf{Khulna}
			& 19.4340 & 0.00364612 & 0.847878 & 0.159691
			& 84.1508 & 15.8492\\
			
			\textbf{Mymensingh}
			& 31.1460 & 0.00526103 & 1.888468 & 0.215828
			& 89.7435 & 10.2565\\
			
			\textbf{Rangpur}
			& 21.6187 & 0.00260735 & 1.591957 & 0.156038
			& 91.0733 & 8.9267\\
			
			\bottomrule
		\end{tabular}%
	}
	
	\vspace{2mm}
	
	\parbox[t]{\textwidth}{%
		\footnotesize
		
		\textbf{\textit{Note~\ref{tab8}:}} \(I_{\mathrm{peak}}\) denotes the continuous
		model-estimated infectious population in each division when the
		network-wide infectious population reaches its baseline peak.
		The total force of infection, \(\lambda_i\), is expressed per day.
		Local and cross-patch new infections are instantaneous
		model-estimated rates evaluated at the same peak and are expressed
		in individuals per day. Their percentages give the corresponding
		shares of total new infections within each division. Because the
		model is continuous, population and infection-rate estimates may
		be non-integer. Bold type identifies the largest value in each
		column.
	}
\end{table}

%%%%%%%%%%%%%%%%%%%%%%%%%%%%%%%%%%%%%%%%%%%%%%%%%%%%%%%%%%%%%%%%%%%%%%%%%%
 At the baseline network-wide epidemic peak, table~\ref{tab8} shows that Chattogram has the largest infectious population (\(91.31\) individuals), closely followed by Dhaka (\(90.38\)). Chattogram also records the highest local and cross-patch new-infection rates (\(7.0366\) and \(0.4645\) infections per day, respectively), whereas Dhaka exhibits the greatest total force of infection (\(0.0163~\mathrm{day}^{-1}\)). Local transmission remains dominant in every division, contributing \(71.17\%\)--\(93.81\%\) of new infections. Chattogram has the largest local share (\(93.81\%\)) and the smallest cross-patch share (\(6.19\%\)), whereas Rajshahi has the smallest local share (\(71.17\%\)) and the greatest relative dependence on cross-patch infection (\(28.83\%\)). The compartmental movement balances in table~\ref{tab9} further indicate that Dhaka receives the largest net susceptible and vaccinated inflows and has the highest movement-related exposure tendency. In contrast, the infectious-movement balance classifies Dhaka and Chattogram as net exporters, while the remaining divisions are net importers, with Rajshahi receiving the largest net infectious inflow. Consistent with this pattern, table~\ref{tab10} identifies Dhaka and Chattogram as cross-patch sources, with net source rates of \(0.7478\) and \(0.0648\) infections per day, respectively. The other divisions act as cross-patch sinks, with Barishal exhibiting the largest net sink magnitude (\(0.1687\) infections per day).
	
	\begin{figure}[H]
		\centering
		\includegraphics[
		width=\textwidth,
		trim={0 1.5cm 0 0},
		clip
		]{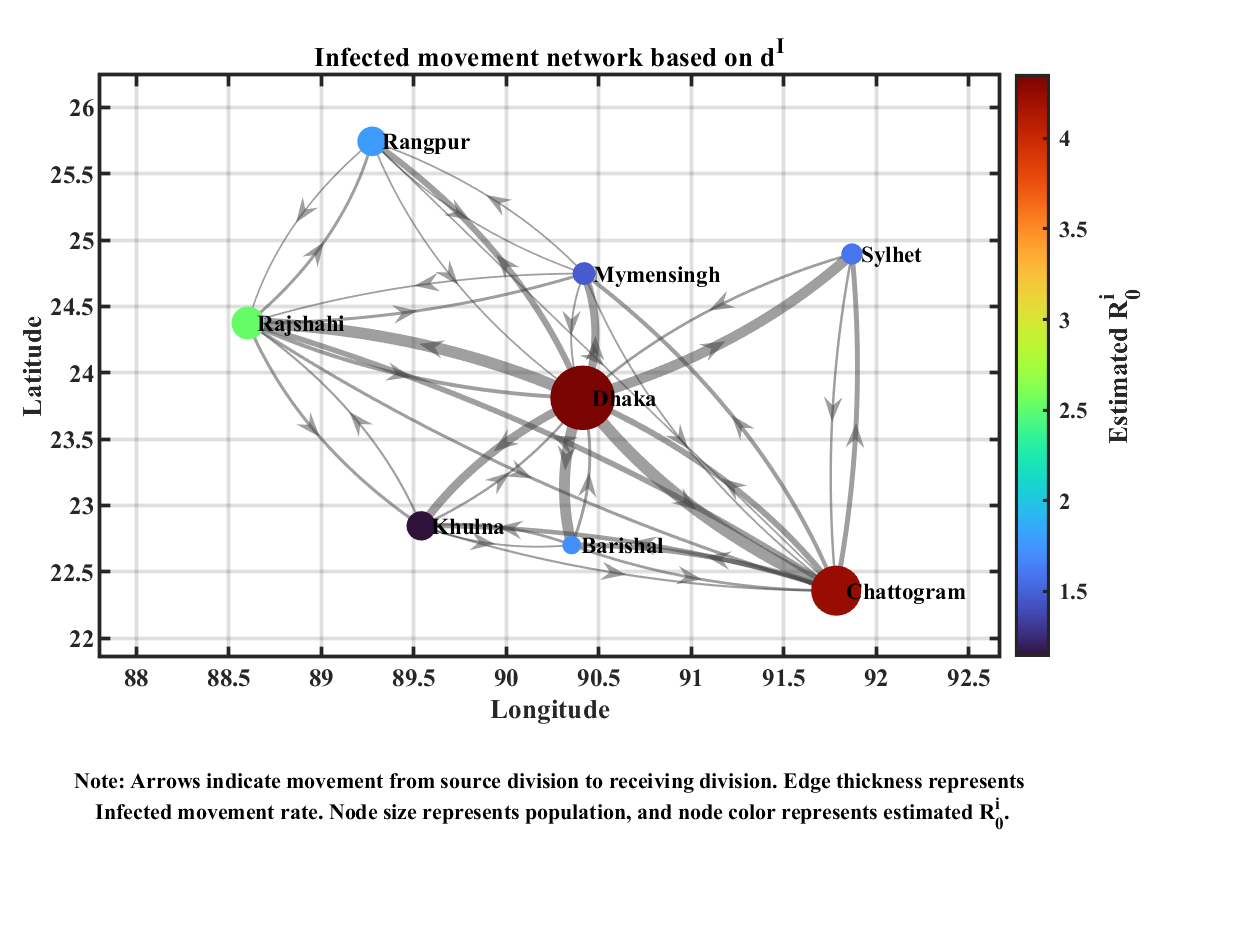}
		\caption{Baseline infectious-movement network among the eight divisions, showing the 35 strongest directed links and their relative movement intensities.}
		\label{directional}
	\end{figure}
	To improve visual clarity, figure~\ref{directional} displays the 35 strongest directed infectious-movement links, ranked according to \(d_{ij}^{I}\); all movement links remain included in the model simulations, while weaker links are omitted only from the network visualization to reduce overcrowding and overlapping arrows. The corresponding source--sink pattern identifies Dhaka and Chattogram as net exporters and the other six divisions as net importers. Together with the infection-attribution results, this indicates that local transmission remains dominant in every division, although cross-patch infection constitutes a larger proportion of new infections in several lower-burden divisions.

The contour analyses compare the effects of network-wide scaling with those of scaling only the links connected to each focal division (\cref{contour_dc,contour_rjb,contour_sk,contour_mr}). As expected, both analyses reproduce the same cumulative burden at the baseline point (1,1), with the largest values occurring in Dhaka 680.72 (figure~\ref{contour_dc}a,b) and Chattogram 647.72 (figure~\ref{contour_dc}c,d). Removing movement and cross-patch infection does not eliminate transmission: cumulative burden remains positive in every division, ranging from 38.86 in Khulna to 458.70 in Dhaka, confirming that local transmission alone can sustain infection. However, the proportional amplification associated with baseline spatial coupling varies considerably. 
	\begin{figure}[H]
		\centering
		
		\begin{subfigure}[t]{0.49\textwidth}
			\centering
			\includegraphics[width=\linewidth]{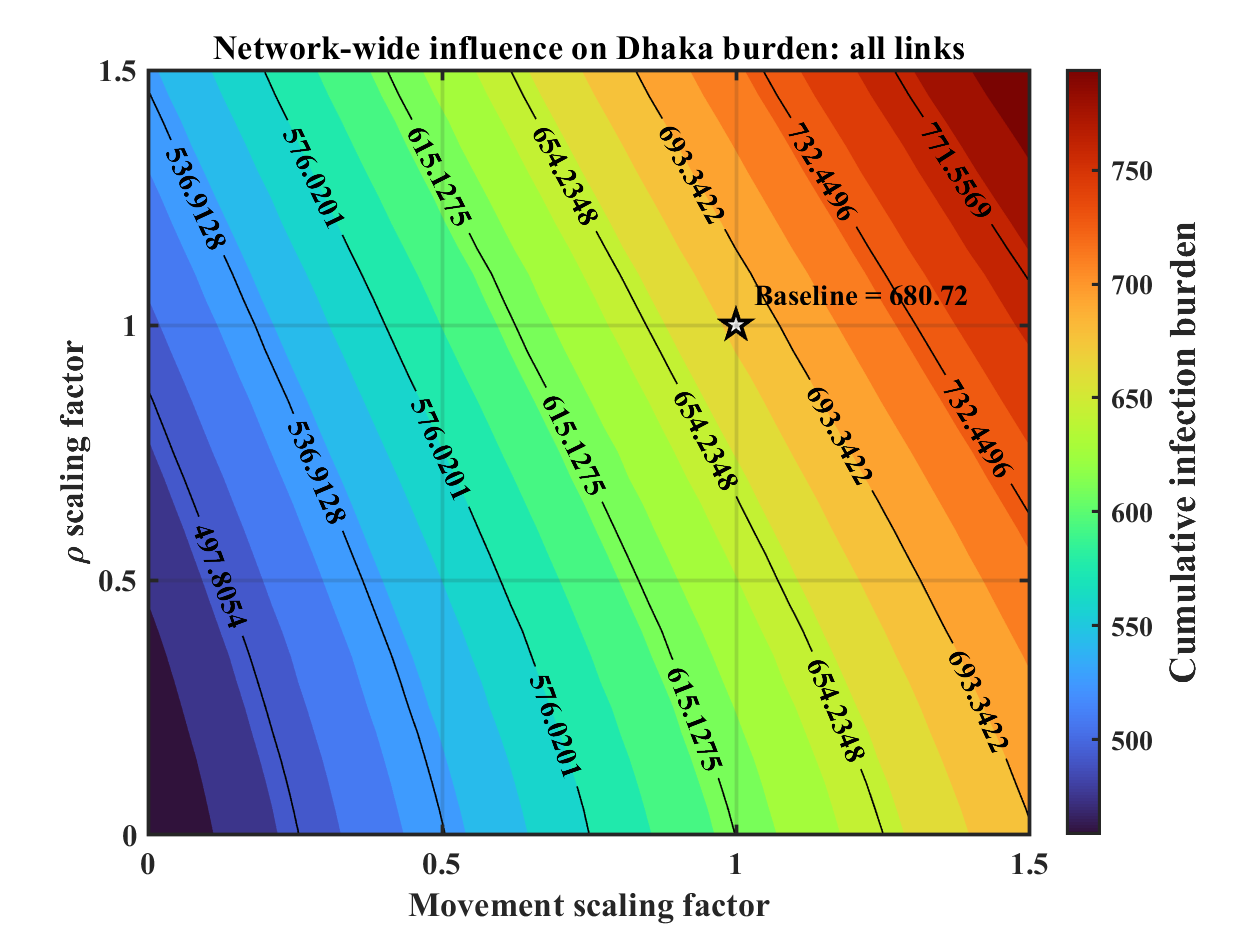}
			\caption{Dhaka: network-wide scaling}
			\label{global_dhaka}
		\end{subfigure}
		\hfill
		\begin{subfigure}[t]{0.49\textwidth}
			\centering
			\includegraphics[width=\linewidth]{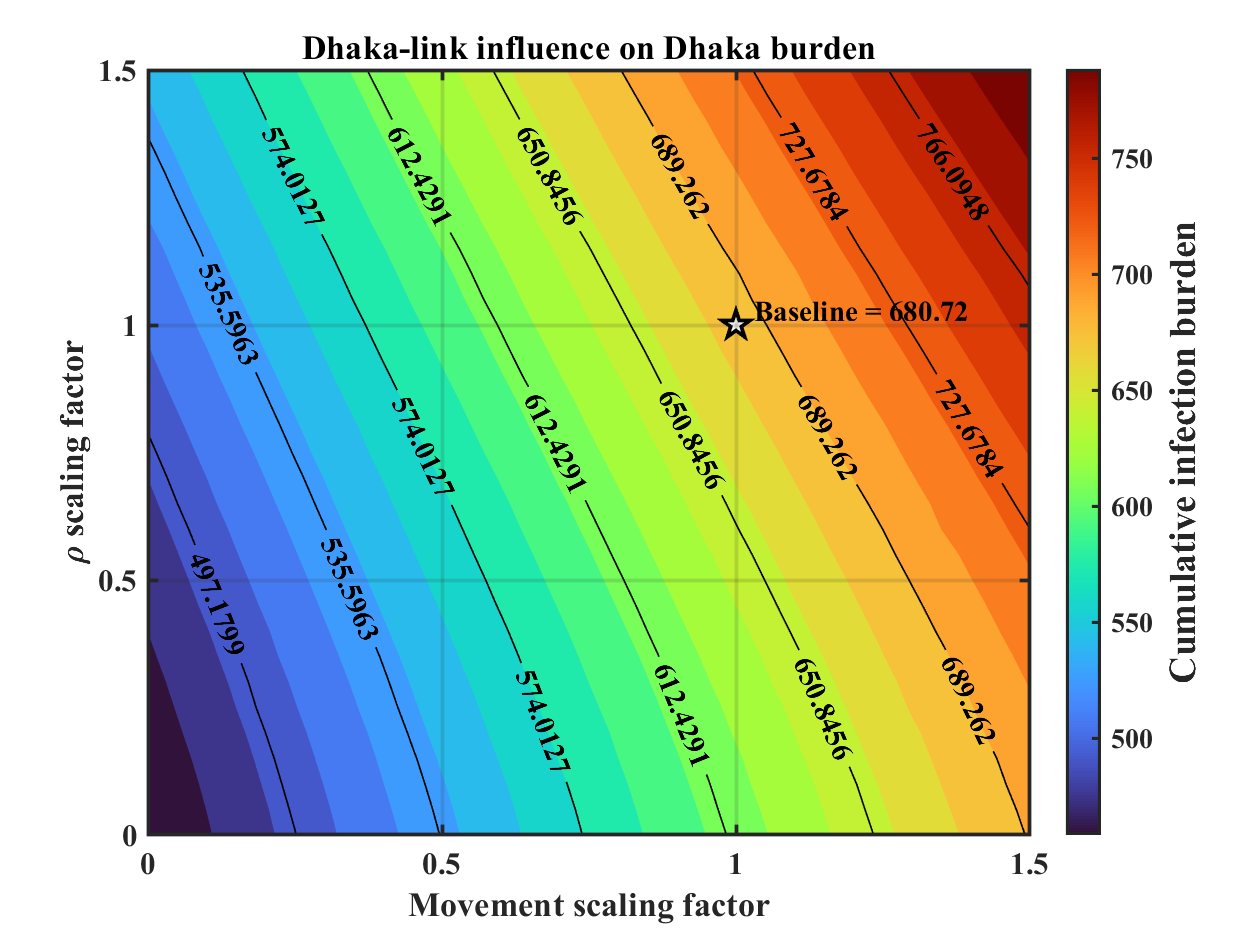}
			\caption{Dhaka-link-specific scaling}
			\label{link_dhaka}
		\end{subfigure}
		
		\vspace{0.3cm}
		
		\begin{subfigure}[t]{0.49\textwidth}
			\centering
			\includegraphics[width=\linewidth]{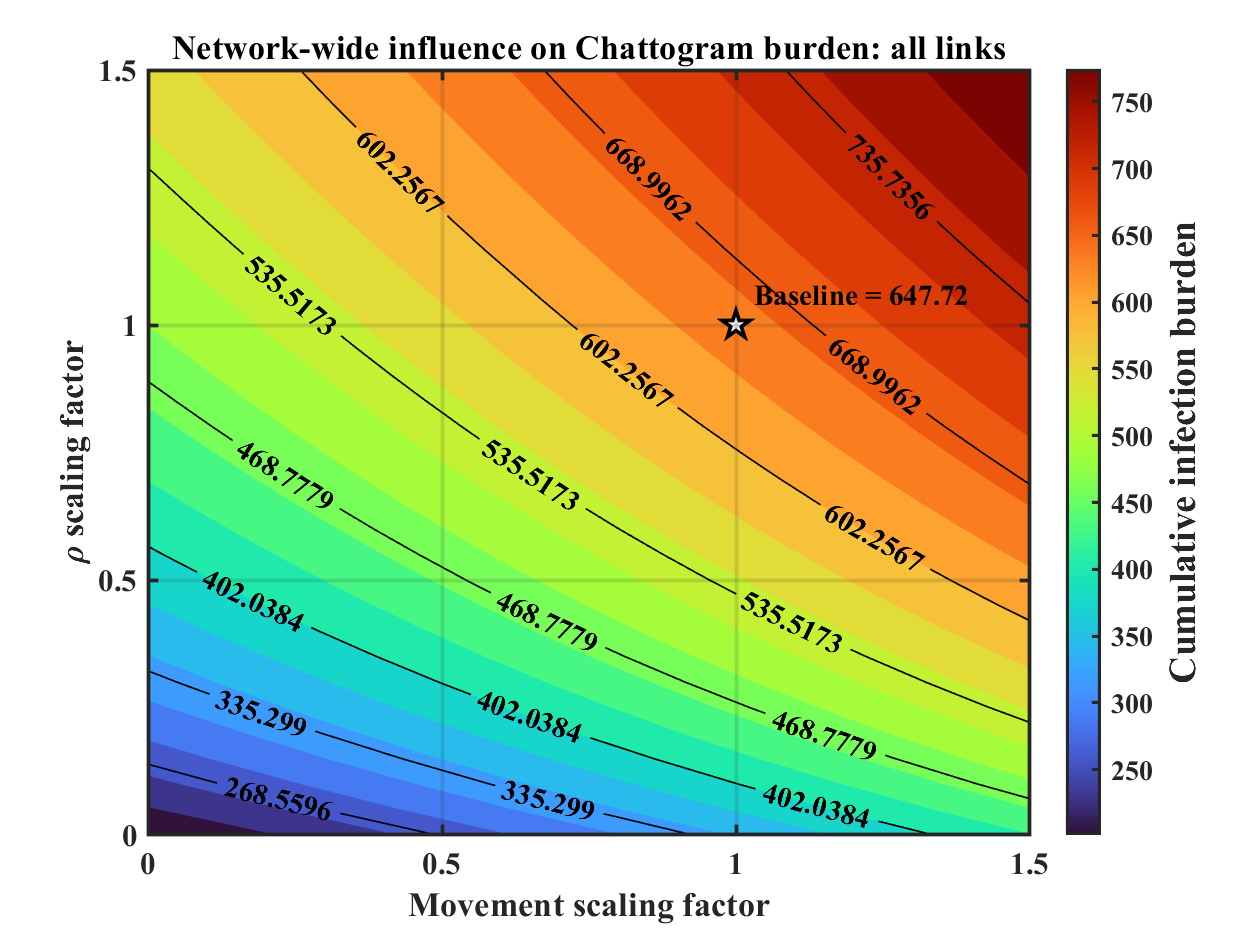}
			\caption{Chattogram: network-wide scaling}
			\label{global_chattogram}
		\end{subfigure}
		\hfill
		\begin{subfigure}[t]{0.49\textwidth}
			\centering
			\includegraphics[width=\linewidth]{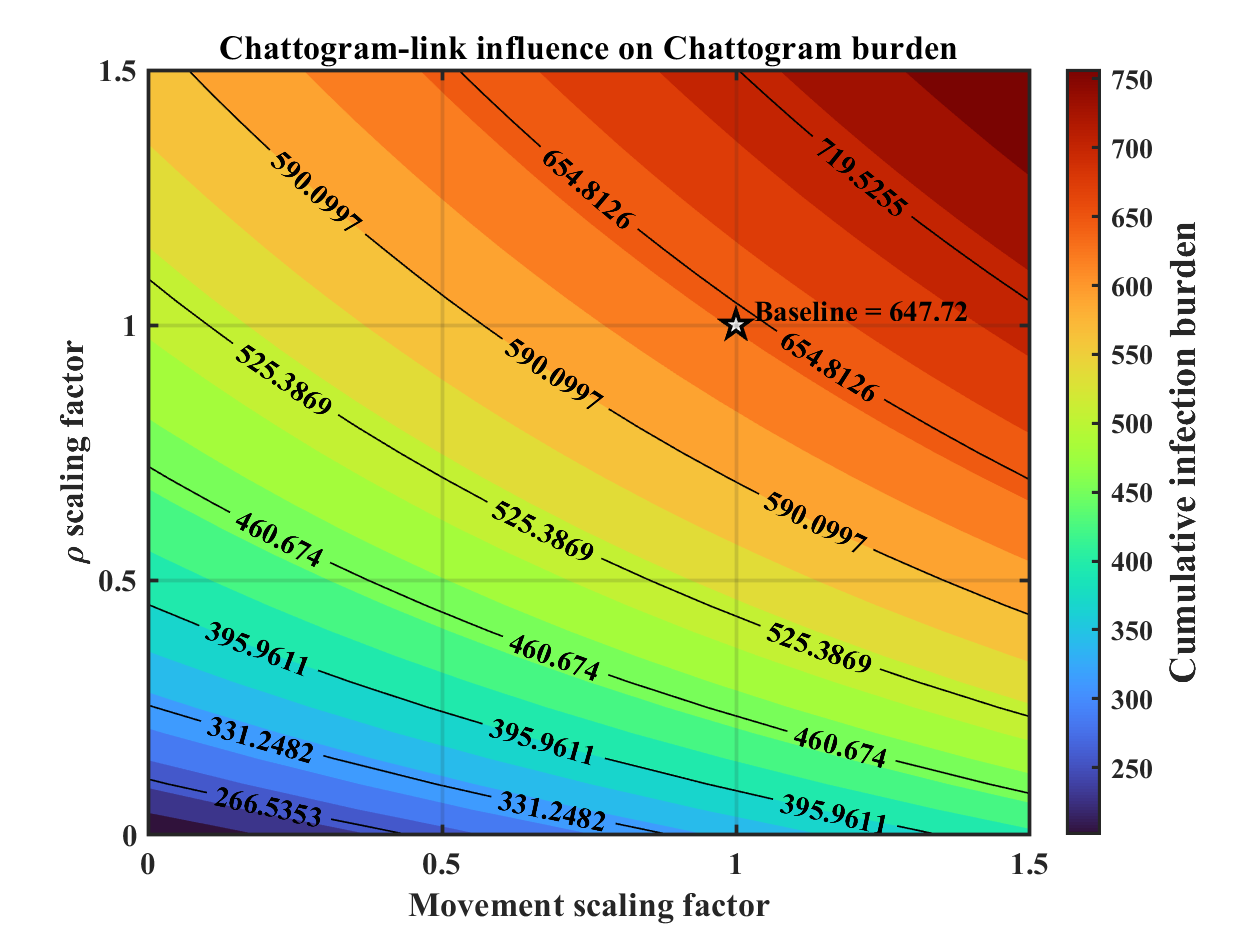}
			\caption{Chattogram-link-specific scaling}
			\label{link_chattogram}
		\end{subfigure}
		
		\caption{Sensitivity of the cumulative infection burden in Dhaka and
			Chattogram to movement and cross-patch infection. The left panels vary
			all interdivisional links simultaneously, whereas the right panels vary
			only links connected to the indicated division while retaining all other
			links at their baseline values. Stars indicate the baseline point
			$(1,1)$.}
		\label{contour_dc}
	\end{figure}
	 Relative to zero coupling, cumulative burden increases most strongly in Mymensingh 319.90\%, Rangpur 269.84\%, Khulna 242.25\%, Barishal 237.26\%, and Chattogram 220.94\%, indicating greater dependence on spatial transmission despite their different absolute burdens. When both scaling factors increase to 1.5, network-wide burden rises by 12.98\% in Sylhet (figure~\ref{contour_sk}a) to 32.95\% in Khulna (figure~\ref{contour_sk}c) relative to baseline. These increases consistently exceed those obtained by scaling only the focal division's links, with differences ranging from 6.15 infections in Dhaka to 18.24 in Chattogram. This indicates that indirect pathways through the wider network contribute appreciably beyond direct connections to the focal division. For Dhaka, Chattogram, and Rajshahi (figure~\ref{contour_rjb}a,b), the minimum and maximum burdens occur at the lowest and highest joint-scaling points, respectively, suggesting a largely monotonic spatial effect. In contrast, the off-corner extrema observed in Barishal (figure~\ref{contour_rjb}c,d), Khulna (figure~\ref{contour_sk}c,d), Mymensingh (figure~\ref{contour_mr}a,b), Rangpur (figure~\ref{contour_mr}c,d), and Sylhet (figure~\ref{contour_sk}a,b) indicate more complex interactions in which movement may redistribute infections among divisions rather than uniformly increasing the burden within each focal division.
	\begin{figure}[H]
		\centering
		
		\begin{subfigure}[t]{0.45\textwidth}
			\centering
			\includegraphics[width=\linewidth]{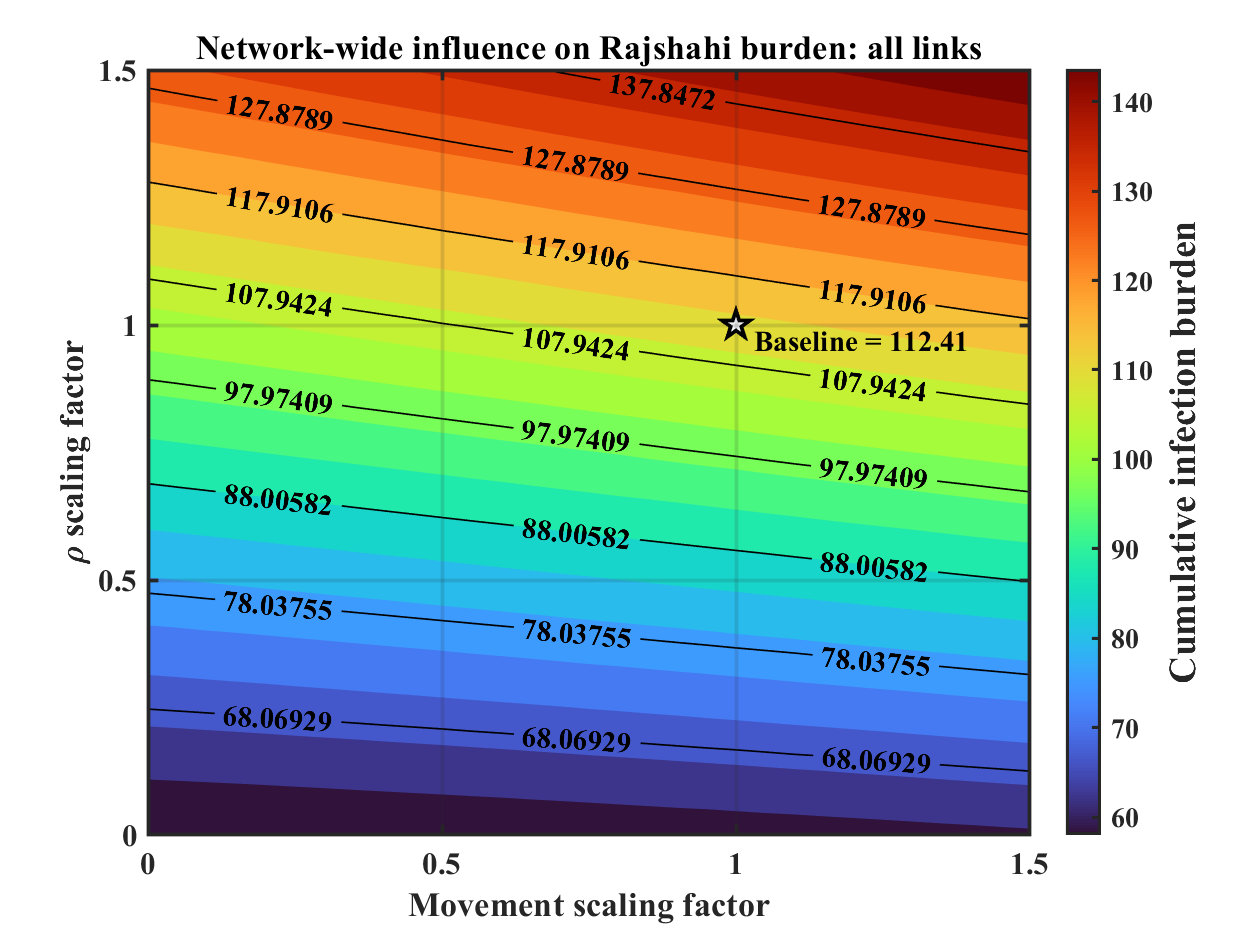}
			\caption{Rajshahi: network-wide scaling}
			\label{global_Rajshahi}
		\end{subfigure}
		\hfill
		\begin{subfigure}[t]{0.45\textwidth}
			\centering
			\includegraphics[width=\linewidth]{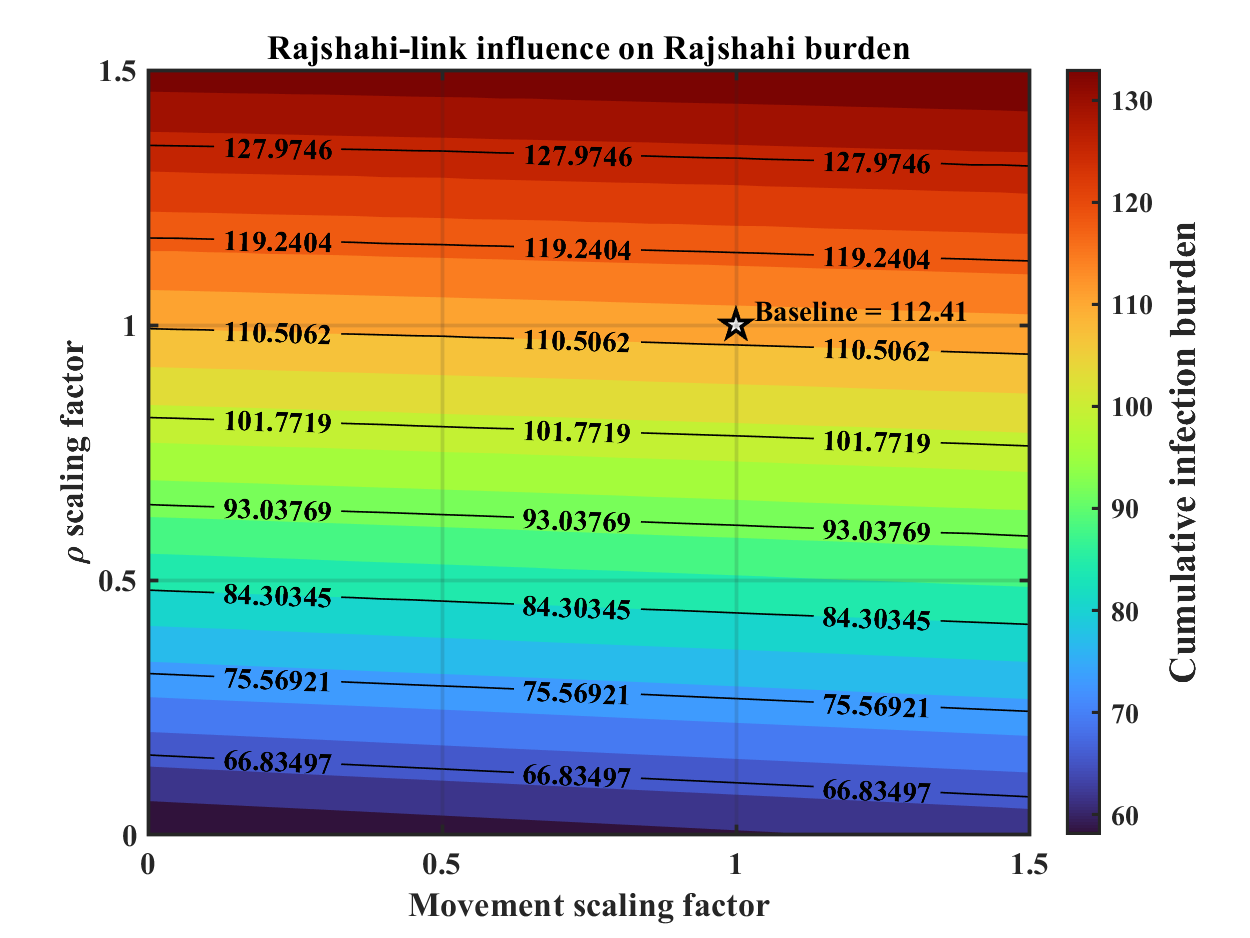}
			\caption{Rajshahi-link-specific scaling}
			\label{link_Rajshahi}
		\end{subfigure}
		
		\vspace{0.3cm}
		
		\begin{subfigure}[t]{0.49\textwidth}
			\centering
			\includegraphics[width=\linewidth]{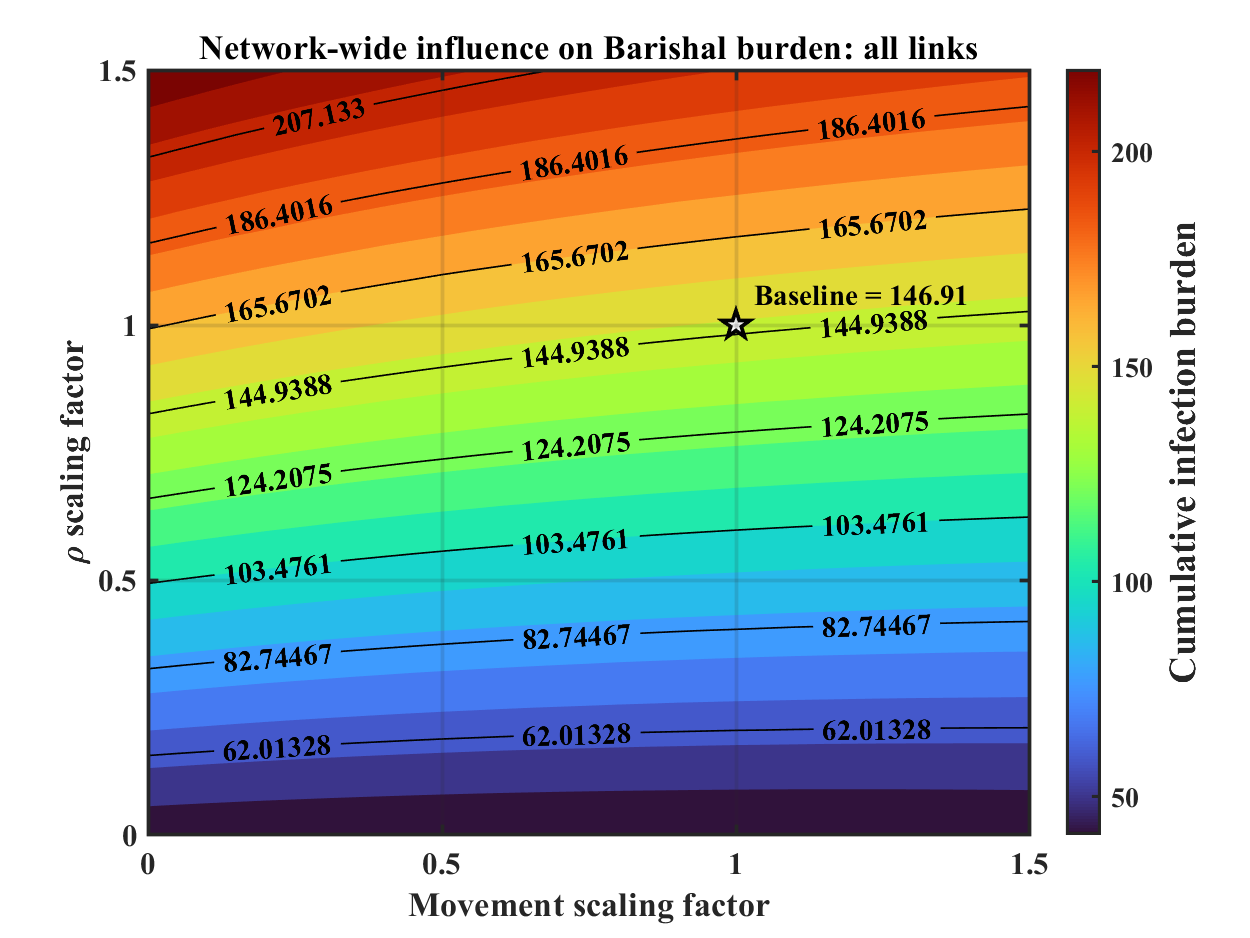}
			\caption{Barishal: network-wide scaling}
			\label{global_Barishal}
		\end{subfigure}
		\hfill
		\begin{subfigure}[t]{0.49\textwidth}
			\centering
			\includegraphics[width=\linewidth]{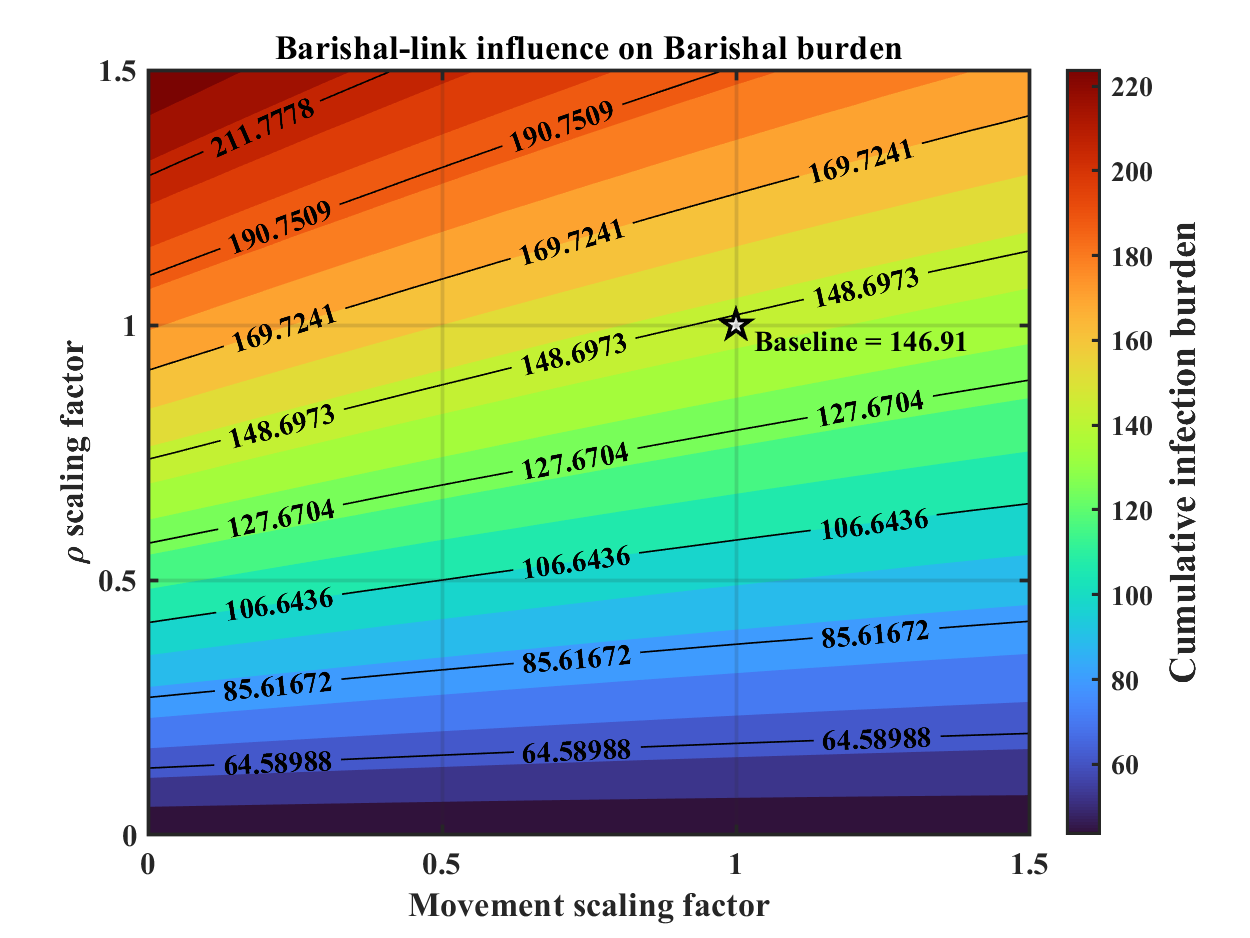}
			\caption{Barishal-link-specific scaling}
			\label{link_Barishal}
		\end{subfigure}
		
		\caption{Sensitivity of the cumulative infection burden in Rajshahi and
			Barishal to movement and cross-patch infection. The left panels vary
			all interdivisional links simultaneously, whereas the right panels vary
			only links connected to the indicated division while retaining all other
			links at their baseline values. Stars indicate the baseline point
			$(1,1)$.}
		\label{contour_rjb}
	\end{figure}

	From a public-health perspective, these patterns do not support movement control as a uniform stand-alone intervention. Dhaka and Chattogram require strong mobility-linked surveillance and rapid information sharing, whereas the receiving divisions would benefit more from early detection of imported infections, coordinated contact tracing, and synchronized vaccination across connected areas. Because a substantial disease burden remains even without spatial coupling, network-based interventions should be implemented alongside local vaccination, case isolation, and outbreak-response measures.

	\section{Conclusion}\label{conclusion}
	The current measles outbreak in Bangladesh is likely driven by several interconnected factors, including immunity gaps, missed routine vaccinations, vaccine-supply disruptions, high population density, and frequent population movement. This study focuses on how differences in local transmission and vaccination activity, together with interdivisional connectivity, shape the outbreak. Dhaka and Chattogram show the greatest local transmission potential, while spatial coupling increases the network-wide epidemic peak from 190.96 to 313.20 infectious individuals and more than doubles cumulative infections. Although most infections arise through local transmission, cross-patch infection contributes up to 28.83\% of new infections in Rajshahi, with Dhaka and Chattogram acting as the main spatial sources. These findings highlight the need to maintain two-dose vaccination coverage, prevent vaccine shortages, and provide catch-up vaccination to children who have missed one or both routine doses. Stronger surveillance, early case detection, and coordinated responses across connected divisions and major travel routes are also important. Focusing control efforts on major source divisions while closing immunity gaps in vulnerable receiving divisions may offer a more effective and sustainable approach than applying the same interventions uniformly across the country.
	
%	\section*{Acknowledgments} 
%	The research was partially supported by the University Grants Commission (UGC), Bangladesh.
	
	%=========================Conflict of interest======================================
	\section*{Conflict of interest}
	The authors declare no conflict of interest. 
	
	\section*{Data Statement}
	{The datasets generated and/or analysed during the current study are available in the Faizunnesa Khondaker repository, \url{https://github.com/Faizunnesa-Khondaker/Measles.git}.}

	\section*{Ethical approval}
	No consent is required to publish this manuscript.
	
	\section*{Author Contributions (CRediT)}
	Faizunnesa Khondaker: Conceptualization, Data curation, Formal analysis, Methodology, Software, Validation, Visualization, Writing–original draft.\\
	Md. Kamrujjaman: Conceptualization, Methodology, Investigation, Validation, Software, Supervision, Writing–review and editing.\\
	All authors have read and agreed to the published version of the manuscript.

%	Appendix~\ref{tables} (Tables~\ref{tab})

	\clearpage
\appendix
\section{Supplementary Figures}\label{figs}

\begin{figure}[H]
	\centering
	
	\begin{subfigure}[t]{0.48\textwidth}
		\centering
		\includegraphics[width=\linewidth]{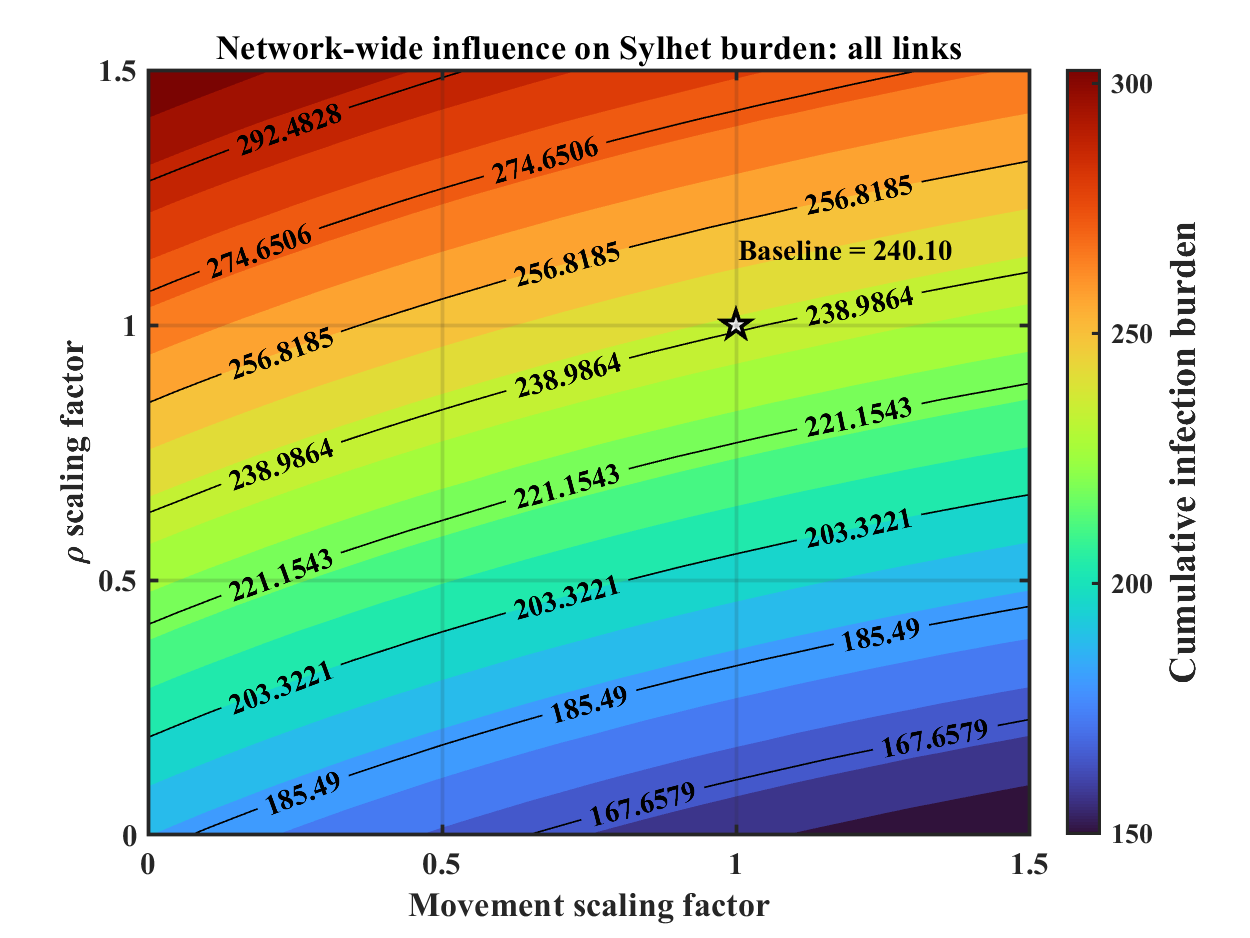}
		\caption{Sylhet: network-wide scaling}
		\label{global_Sylhet}
	\end{subfigure}
	\hfill
	\begin{subfigure}[t]{0.48\textwidth}
		\centering
		\includegraphics[width=\linewidth]{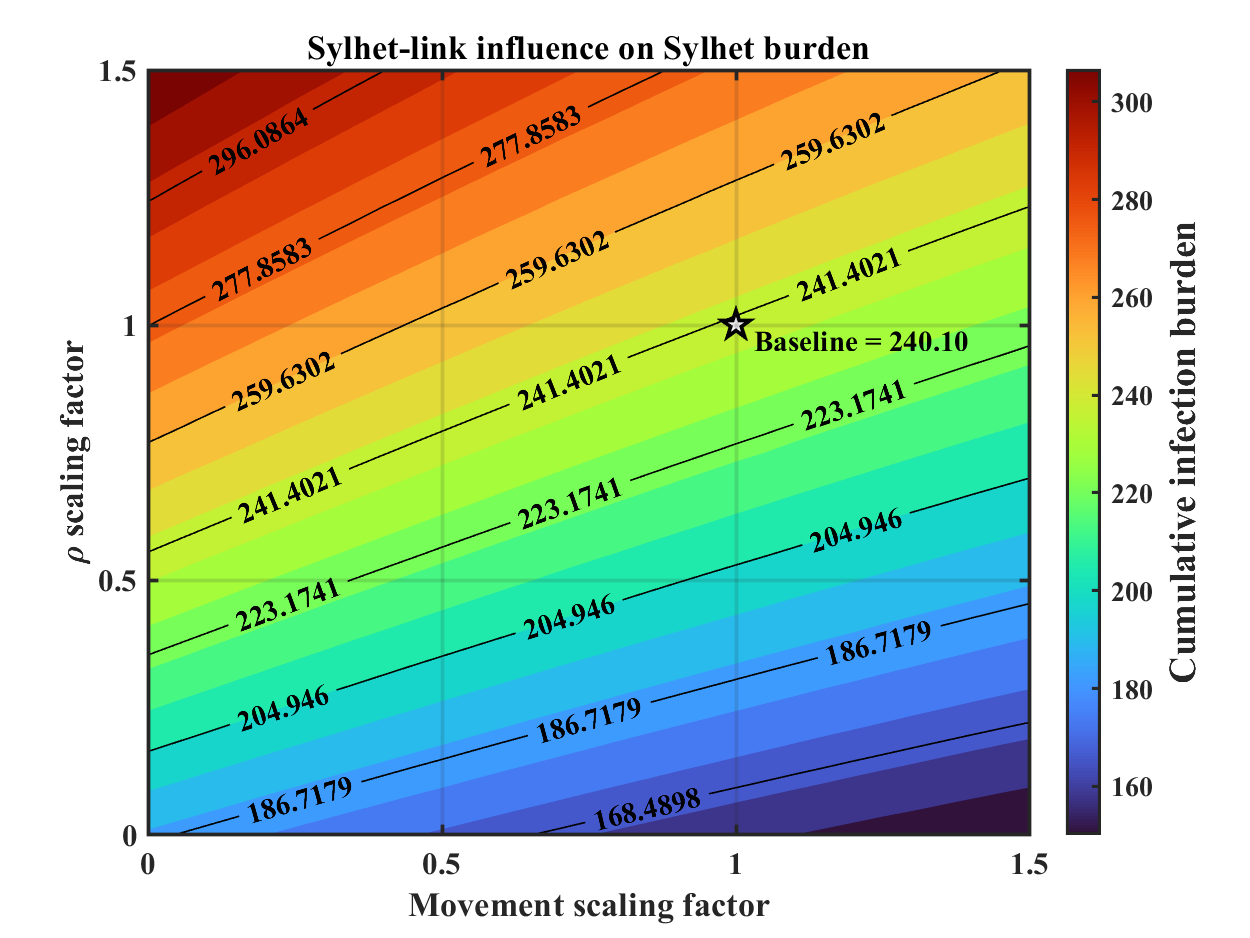}
		\caption{Sylhet-link-specific scaling}
		\label{link_Sylhet}
	\end{subfigure}
	
	\vspace{0.3cm}
	
	\begin{subfigure}[t]{0.48\textwidth}
		\centering
		\includegraphics[width=\linewidth]{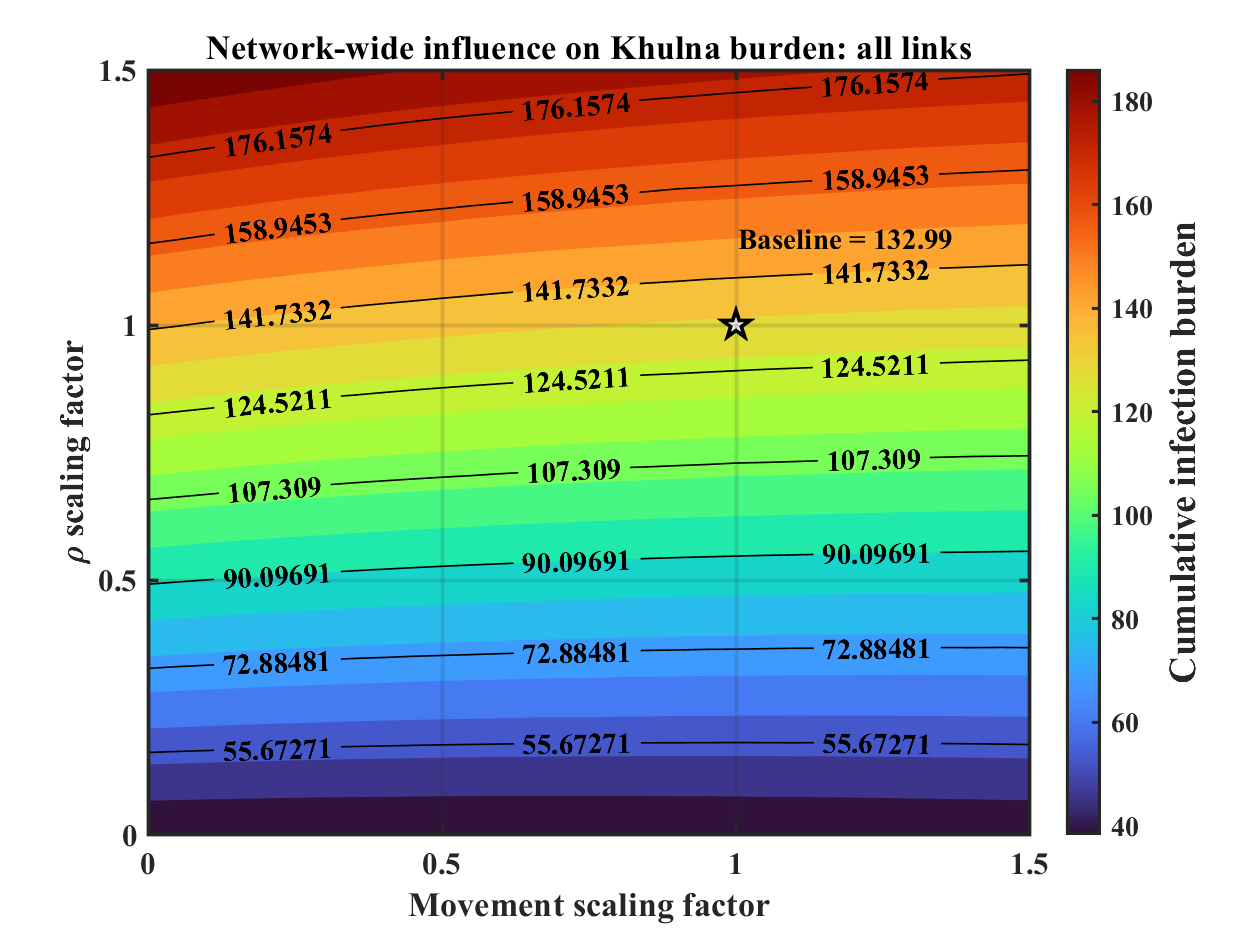}
		\caption{Khulna: network-wide scaling}
		\label{global_Khulna}
	\end{subfigure}
	\hfill
	\begin{subfigure}[t]{0.48\textwidth}
		\centering
		\includegraphics[width=\linewidth]{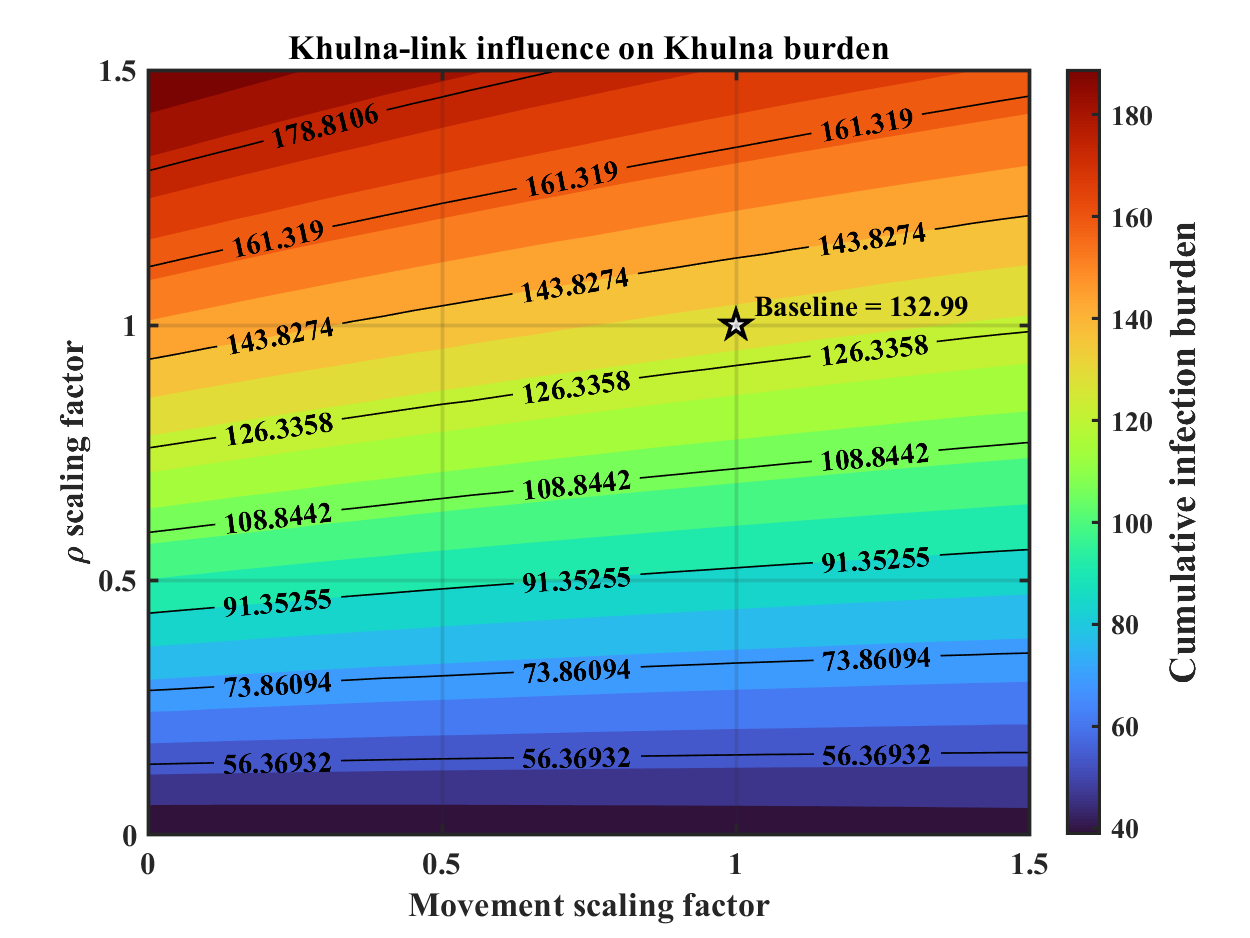}
		\caption{Khulna-link-specific scaling}
		\label{link_Khulna}
	\end{subfigure}
	
	\caption{Sensitivity of the cumulative infection burden in Sylhet and
		Khulna to movement and cross-patch infection. The left panels vary
		all interdivisional links simultaneously, whereas the right panels vary
		only links connected to the indicated division while retaining all other
		links at their baseline values. Stars indicate the baseline point
		$(1,1)$.}
	\label{contour_sk}
\end{figure}
%%%%%%%%%%%%%%%%%%%%%%%%%%%%%%%%%%%%%%%%%%
	\begin{figure}[H]
	\centering
	
	\begin{subfigure}[t]{0.48\textwidth}
		\centering
		\includegraphics[width=\linewidth]{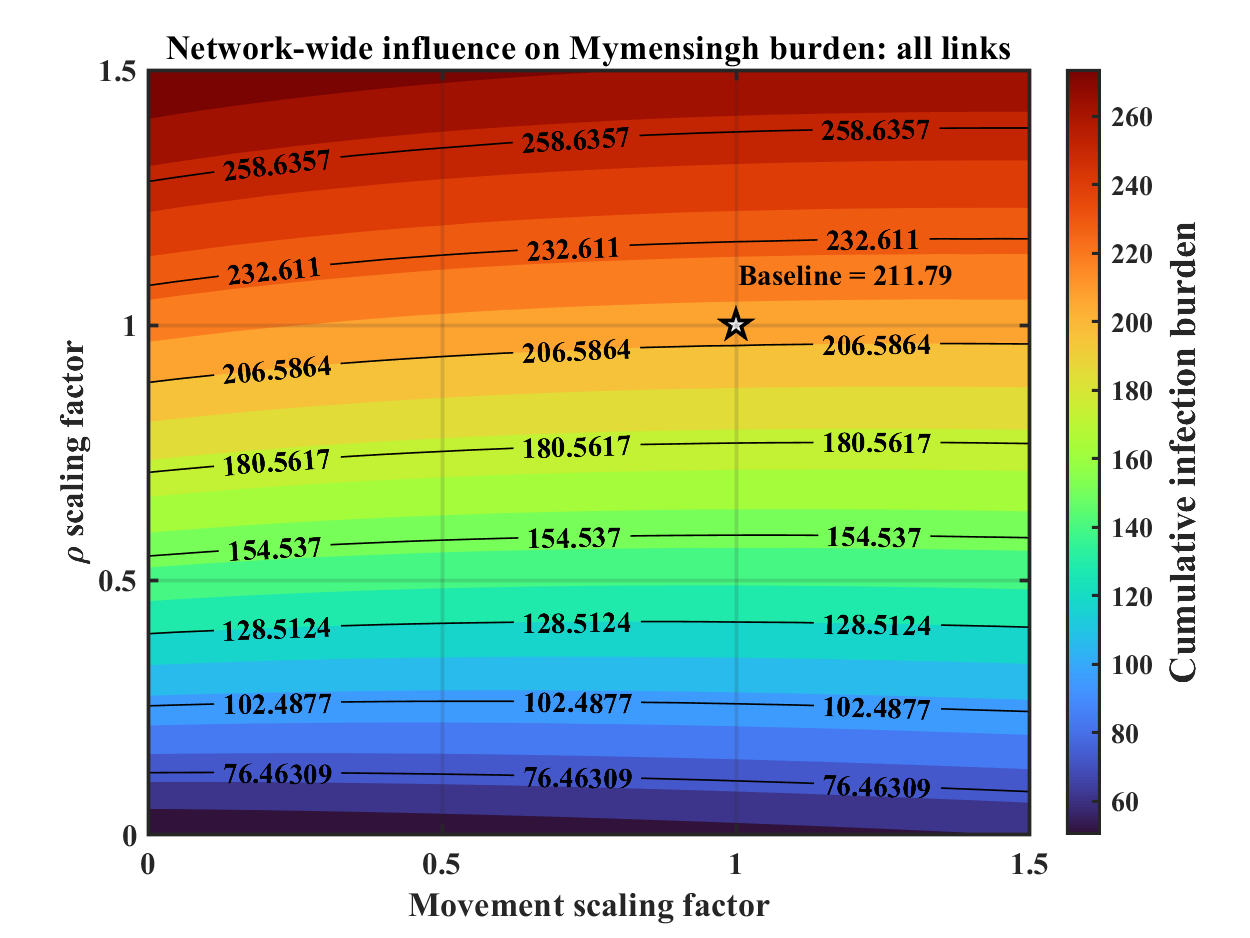}
		\caption{Mymensingh: network-wide scaling}
		\label{global_Mymensingh}
	\end{subfigure}
	\hfill
	\begin{subfigure}[t]{0.48\textwidth}
		\centering
		\includegraphics[width=\linewidth]{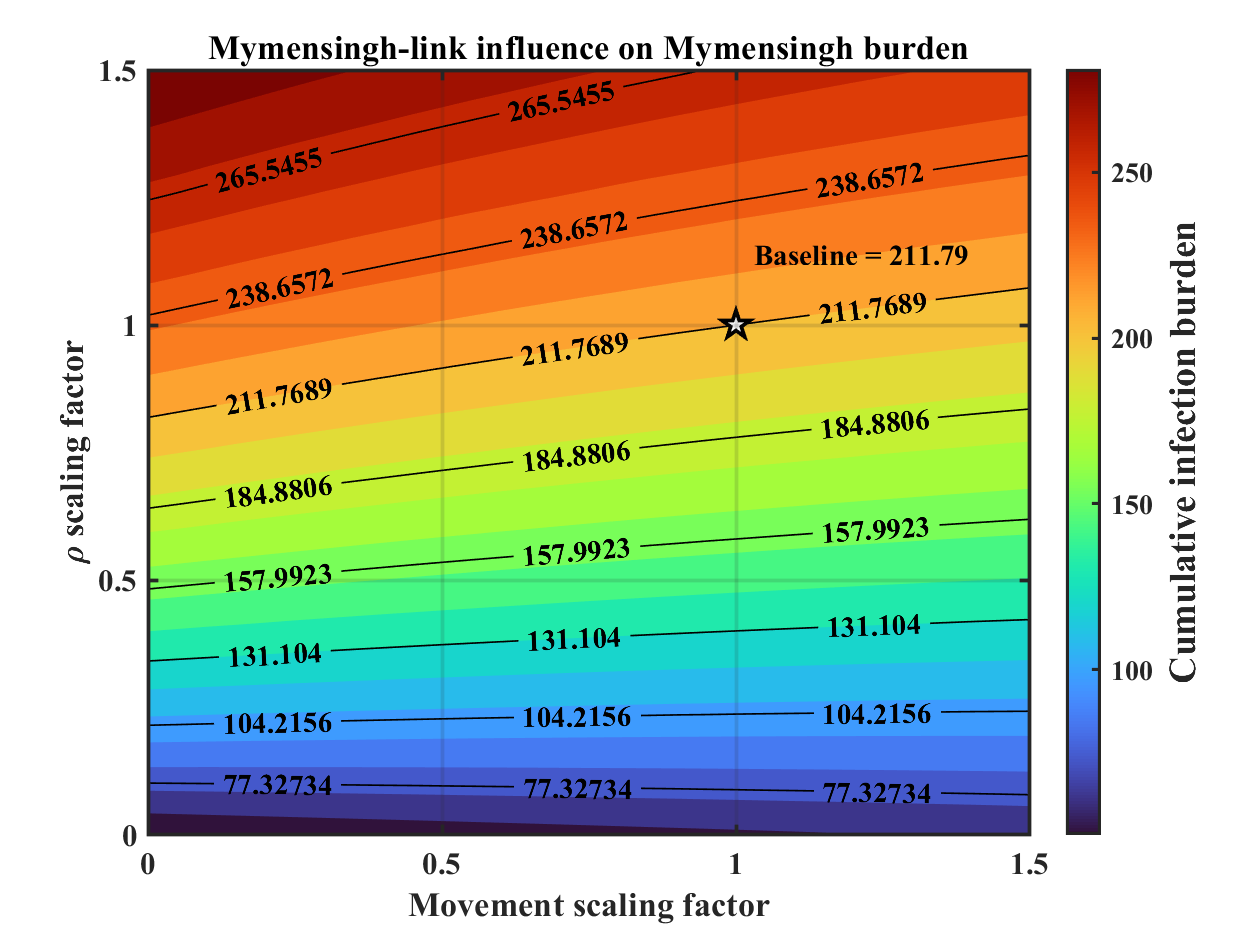}
		\caption{Mymensingh-link-specific scaling}
		\label{link_Mymensingh}
	\end{subfigure}
	
	\vspace{0.3cm}
	
	\begin{subfigure}[t]{0.48\textwidth}
		\centering
		\includegraphics[width=\linewidth]{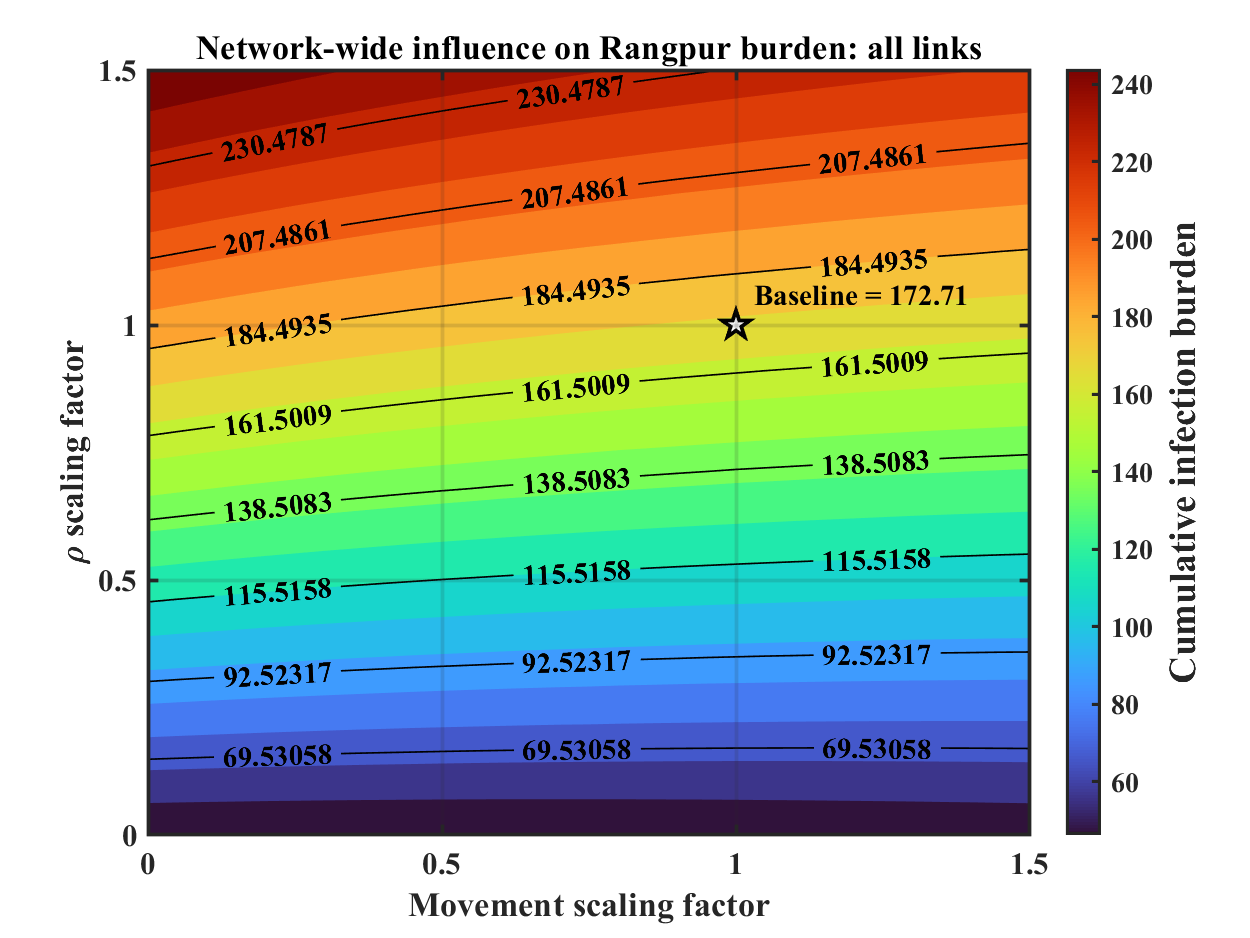}
		\caption{Rangpur: network-wide scaling}
		\label{global_Rangpur}
	\end{subfigure}
	\hfill
	\begin{subfigure}[t]{0.48\textwidth}
		\centering
		\includegraphics[width=\linewidth]{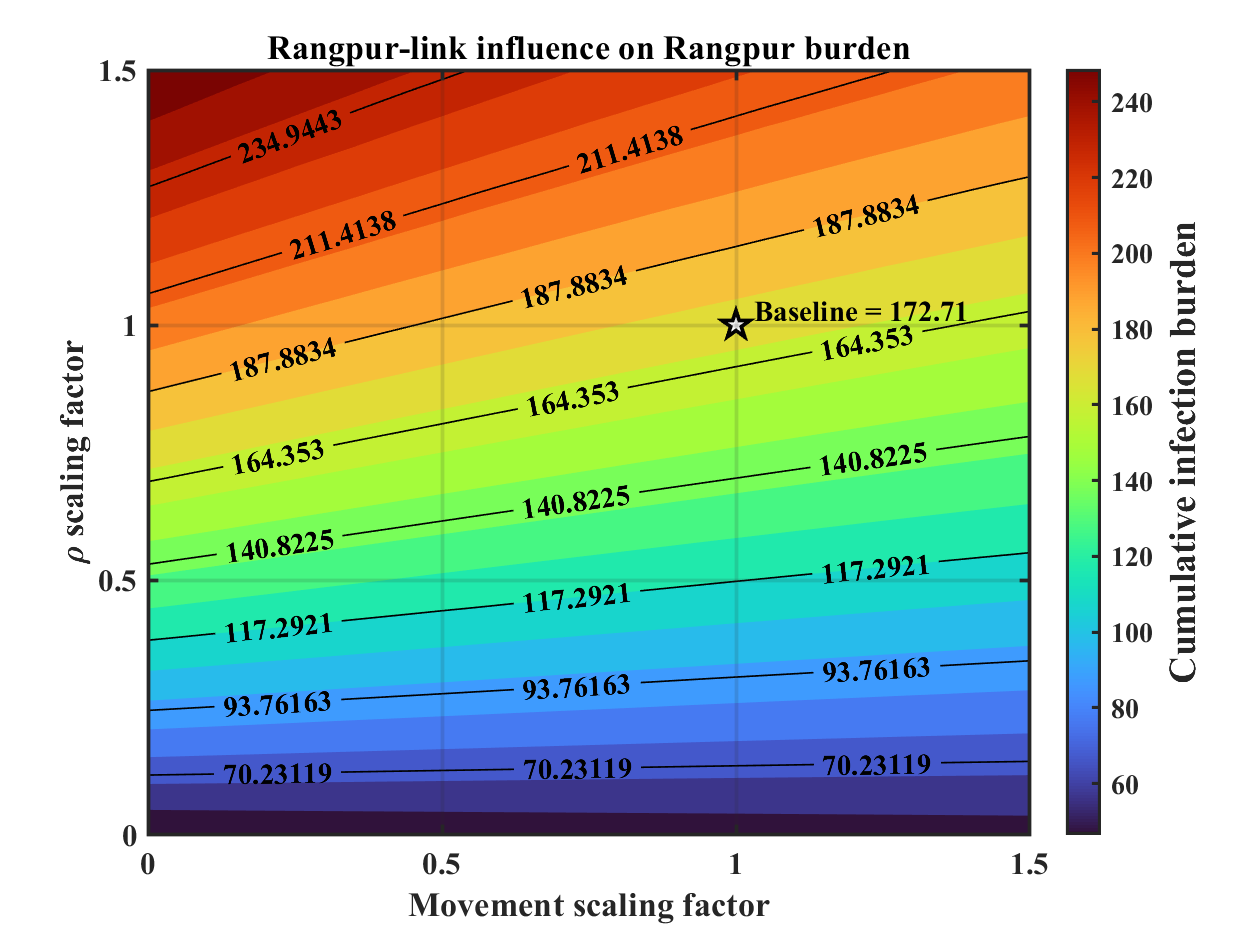}
		\caption{Rangpur-link-specific scaling}
		\label{link_Rangpur}
	\end{subfigure}
	
	\caption{Sensitivity of the cumulative infection burden in Mymensingh and
		Rangpur to movement and cross-patch infection. The left panels vary
		all interdivisional links simultaneously, whereas the right panels vary
		only links connected to the indicated division while retaining all other
		links at their baseline values. Stars indicate the baseline point
		$(1,1)$.}
	\label{contour_mr}
\end{figure}
%%===================================================================

\clearpage
%%%%%%%%%%%%%%%%%%%%%%%%%%%%%%%%%%%%%%%%%%%%%%%%%%%%%%%%%%%%%%%%%
\setcounter{table}{2}

%\begin{landscape}
	
	\section{Supplementary Tables}
	\label{tables}
	%\thispagestyle{empty}

%---------------------Table1---------------------------------------

	\begin{center}
	%\centering
%	\captionof{table}{Division-specific demographic inputs, hub-adjustment factors, fitted epidemiological parameters, and reproduction-number estimates used in the multipatch model.}
	\captionof{table}{Division-specific demographic inputs, hub-adjustment factors, fitted epidemiological parameters, and reproduction-number estimates used in the multipatch model.}
	\label{tab1}
	\begin{adjustbox}{
		max width=\textwidth,
		max totalheight=0.72\textheight,
		center
			}
			\begin{minipage}{\textwidth}
				\centering
				\small
				\renewcommand{\arraystretch}{1.18}
				\setlength{\tabcolsep}{4.5pt}
				
				\textbf{(a) Fixed demographic and model inputs}
				
				\vspace{1.5mm}
		\resizebox{\linewidth}{!}{%
			\begin{tabular}{lcccccc}
				\toprule
				
				\rowcolor{TableBlue}
				\textcolor{white}{\textbf{Division}}
				&
				\textcolor{white}{\textbf{Population}}
				&
				\textcolor{white}{
					\shortstack{\textbf{Receiving}\\\textbf{weight}}
				}
				&
				\textcolor{white}{
					\shortstack{\textbf{Source}\\\textbf{activity}}
				}
				&
				\textcolor{white}{
					\shortstack{\textbf{Recruitment}\\
						$\boldsymbol{\Lambda_i}$\\
						\textbf{(individuals day$^{-1}$)}}
				}
				&
				\textcolor{white}{
					\shortstack{\textbf{Recovery}\\
						$\boldsymbol{\gamma_i}$\\
						\textbf{(day$^{-1}$)}}
				}
				&
				\textcolor{white}{
					\shortstack{\textbf{Relative}\\
						$\boldsymbol{\mathcal{R}_{0,i}^{\mathrm{rel}}}$\\
						\textbf{(unitless)}}
				}
				\\
				
				\midrule
				
				Dhaka
				& 45,644,586 & 3.0000 & 1.4000
				& 0.210000 & 0.050060 & \textbf{1.0000}\\
				
				Chattogram
				& 34,178,612 & 2.5000 & 1.3000
				& 0.130000 & 0.070000 & 0.9720\\
				
				Rajshahi
				& 20,794,019 & 1.0000 & 1.0000
				& 0.110000 & 0.070500 & 0.5813\\
				
				Barishal
				& 9,325,820 & 1.0000 & 1.0000
				& 0.070000 & 0.071000 & 0.3956\\
				
				Sylhet
				& 11,415,113 & 1.0000 & 1.0000
				& 0.056000 & 0.065000 & 0.3679\\
				
				Khulna
				& 17,813,218 & 1.0000 & 1.0000
				& 0.051000 & 0.059000 & 0.2618\\
				
				Mymensingh
				& 12,637,472 & 1.0000 & 1.0000
				& 0.041000 & 0.063000 & 0.3384\\
				
				Rangpur
				& 18,020,071 & 1.0000 & 1.0000
				& 0.060000 & 0.071000 & 0.4105\\
				
				\bottomrule
			\end{tabular}%
		}
		
		\vspace{4mm}
		
		\textbf{(b) Fitted parameters and 95\% bootstrap confidence intervals}
		
		\vspace{1.5mm}
		
		\resizebox{\linewidth}{!}{%
			\begin{tabular}{lccccc}
				\toprule
				
				\rowcolor{TableBlue}
				\textcolor{white}{\textbf{Division}}
				&
				\textcolor{white}{
					\shortstack{\textbf{Effective transmission}\\
						\textbf{coefficient, }$\boldsymbol{\beta_i}$\\
						\textbf{(individual$^{-1}$ day$^{-1}$)}}
				}
				&
				\textcolor{white}{
					\shortstack{\textbf{Disease-induced}\\
						\textbf{mortality, }$\boldsymbol{\delta_i}$\\
						\textbf{(day$^{-1}$)}}
				}
				&
				\textcolor{white}{
					\shortstack{\textbf{Vaccine}\\
						 \textbf{efficacy, }$\boldsymbol{\epsilon_i}$\\
						\textbf{(unitless)}}
				}
				&
				\textcolor{white}{
					\shortstack{\textbf{Vaccination} \\
						\textbf{scaling, }$\boldsymbol{\alpha_i}$\\
						\textbf{(day$^{-1}$)}}
				}
				&
				\textcolor{white}{
					\shortstack{\textbf{Reproduction}\\
						 \textbf{number, }$\boldsymbol{\mathcal{R}_0^i}$\\
						\textbf{(unitless)}}
				}
				\\
				
				\midrule
				
				Dhaka
				& \estci{0.00016564}{0.00012992}{0.00020577}
				& \estci{0.00410708}{0.00081232}{0.00765798}
				& \estci{0.75421625}{0.62667265}{0.92238768}
				& \estci{0.86575158}{0.38380511}{0.99999901}
				& \estci{\textbf{4.349145}}{1.444935}{5.630704}\\
				
				Chattogram
				& \estci{0.00015381}{0.00011107}{0.00019945}
				& \estci{0.00501308}{0.00036558}{0.02280095}
				& \estci{0.11411897}{0.10000004}{0.30724939}
				& \estci{0.93756460}{0.26110391}{0.99996145}
				& \estci{4.227464}{4.001903}{4.318450}\\
				
				Rajshahi
				& \estci{0.00017919}{0.00013258}{0.00024739}
				& \estci{0.03379371}{0.02236295}{0.06066636}
				& \estci{0.51446226}{0.16099108}{0.77664745}
				& \estci{0.70065721}{0.23490762}{0.99335765}
				& \estci{2.528084}{1.413498}{3.209059}\\
				
				Barishal
				& \estci{0.00009496}{0.00008881}{0.00011793}
				& \estci{0.00008181}{0.00000164}{0.00044229}
				& \estci{0.33195783}{0.30000069}{0.50923110}
				& \estci{0.83921630}{0.46486581}{0.99974021}
				& \estci{1.720343}{1.528806}{1.805039}\\
				
				Sylhet
				& \estci{0.00011692}{0.00010167}{0.00014345}
				& \estci{0.00031357}{0.00007406}{0.00054486}
				& \estci{0.42078139}{0.30830902}{0.56896282}
				& \estci{0.26708902}{0.16855624}{0.65137615}
				& \estci{1.600028}{1.354918}{1.798346}\\
				
				Khulna
				& \estci{0.00015788}{0.00012086}{0.00019020}
				& \estci{0.00000550}{0.00000130}{0.00070779}
				& \estci{0.69725947}{0.53491339}{0.86818106}
				& \estci{0.55502642}{0.24096809}{0.91441142}
				& \estci{1.138642}{0.570290}{1.383432}\\
				
				Mymensingh
				& \estci{0.00015159}{0.00013621}{0.00019500}
				& \estci{0.00000037}{0.00000026}{0.00048312}
				& \estci{0.45841133}{0.37059838}{0.63272883}
				& \estci{0.54140787}{0.28561095}{0.94368715}
				& \estci{1.471743}{1.263061}{1.550761}\\
				
				Rangpur
				& \estci{0.00010984}{0.00010715}{0.00012769}
				& \estci{0.00002254}{0.00000010}{0.00045667}
				& \estci{0.30128849}{0.30000063}{0.43843522}
				& \estci{0.97405897}{0.41074438}{1.00000000}
				& \estci{1.785397}{1.637892}{1.840509}\\
				
				\bottomrule
			\end{tabular}%
		}
		
		\vspace{3mm}
		
		\noindent
		\parbox{\linewidth}{%
			\footnotesize
			\raggedright
			\textbf{\textit{Note~\ref{tab1}:}} Population data are obtained from the \textit{Population and Housing Census 2022} \cite{bbs}.
			The natural mortality rate, \(\mu=(75.5\times365)^{-1}
			=0.0000363~\mathrm{day}^{-1}\), is based on the national
			life expectancy reported by Worldometer \cite{world} and is
			applied uniformly across all divisions. Specified inputs are
			not assigned confidence intervals. Bracketed values are
			percentile-based 95\% bootstrap confidence intervals from
			1,000 replicates.
			\(\mathcal{R}_{0,i}^{\mathrm{rel}}=\mathcal{R}_0^i/\max_k\mathcal{R}_0^k\);
			bold type identifies the largest estimate.
		}
		
	\end{minipage}
\end{adjustbox}
\end{center}

	%---------------------------Table 2---------------------------------
	\begin{table}[H]
		\centering
		\caption{Geographical coordinates and pairwise great-circle distances
			among the eight divisions of Bangladesh.}
		\label{tab2}
		
		\renewcommand{\arraystretch}{1.18}
		\setlength{\tabcolsep}{3pt}
		
		%==============================================================
		% Panel (a): Coordinates
		%==============================================================
		\textbf{(a) Geographical coordinates}
		
		\vspace{1.5mm}
		
		\begin{tabular}{lcc}
			\toprule
			
			\rowcolor{TableBlue}
			\textcolor{white}{\textbf{Division}}
			&
			\textcolor{white}{\textbf{Latitude ($^\circ$N)}}
			&
			\textcolor{white}{\textbf{Longitude ($^\circ$E)}}\\
			
			\midrule
			
			Dhaka       & 23.8103 & 90.4125\\
			Chattogram  & 22.3569 & 91.7832\\
			Rajshahi    & 24.3745 & 88.6042\\
			Barishal    & 22.7010 & 90.3535\\
			Sylhet      & 24.8949 & 91.8687\\
			Khulna      & 22.8456 & 89.5403\\
			Mymensingh  & 24.7471 & 90.4203\\
			Rangpur     & 25.7439 & 89.2752\\
			
			\bottomrule
		\end{tabular}
		
		\vspace{4mm}
		
		%==============================================================
		% Panel (b): Pairwise distances
		%==============================================================
		\textbf{(b) Pairwise great-circle distances (km)}
		
		\vspace{1.5mm}
		
		\resizebox{\textwidth}{!}{%
			\begin{tabular}{lcccccccc}
				\toprule
				
				\rowcolor{TableBlue}
				\textcolor{white}{\textbf{Receiving division}}
				&
				\multicolumn{8}{c}{
					\cellcolor{TableBlue}
					\textcolor{white}{\textbf{Source division}}
				}\\
				
				\rowcolor{HeaderFill}
				{}
				& \textcolor{TableBlue}{\textbf{Dhaka}}
				& \textcolor{TableBlue}{\textbf{Chattogram}}
				& \textcolor{TableBlue}{\textbf{Rajshahi}}
				& \textcolor{TableBlue}{\textbf{Barishal}}
				& \textcolor{TableBlue}{\textbf{Sylhet}}
				& \textcolor{TableBlue}{\textbf{Khulna}}
				& \textcolor{TableBlue}{\textbf{Mymensingh}}
				& \textcolor{TableBlue}{\textbf{Rangpur}}\\
				
				\midrule
				
				\textbf{Dhaka}
				& 0.00 & 213.95 & 193.98 & 123.50
				& 190.54 & 139.42 & 104.17 & 243.74\\
				
				\textbf{Chattogram}
				& 213.95 & 0.00 & 394.47 & 151.75
				& 282.35 & 236.57 & 299.89 & 454.60\\
				
				\textbf{Rajshahi}
				& 193.98 & 394.47 & 0.00 & 257.73
				& 334.98 & 194.93 & 188.28 & 166.60\\
				
				\textbf{Barishal}
				& 123.50 & 151.75 & 257.73 & 0.00
				& 288.57 & 84.91 & 227.62 & 355.58\\
				
				\textbf{Sylhet}
				& 190.54 & 282.35 & 334.98 & 288.57
				& 0.00 & 328.59 & 147.10 & 277.24\\
				
				\textbf{Khulna}
				& 139.42 & 236.57 & 194.93 & 84.91
				& 328.59 & 0.00 & 229.61 & 323.39\\
				
				\textbf{Mymensingh}
				& 104.17 & 299.89 & 188.28 & 227.62
				& 147.10 & 229.61 & 0.00 & 159.84\\
				
				\textbf{Rangpur}
				& 243.74 & 454.60 & 166.60 & 355.58
				& 277.24 & 323.39 & 159.84 & 0.00\\
				
				\bottomrule
			\end{tabular}%
		}
		
		\vspace{2mm}
		
		\parbox[t]{\textwidth}{%
			\footnotesize
			\textbf{\textit{Note~\ref{tab2}:}} Division-level latitude and longitude coordinates are
			obtained from the NASA POWER Data Access Viewer \cite{nasa}. Pairwise distances are calculated using the Haversine formula \cite{haversine}, with a mean Earth radius of \(R=6371\) km. The resulting values represent great-circle distances rather than road-travel distances. Consequently, the distance matrix is symmetric, and its diagonal elements are zero.
		}
	\end{table}
	%---------------------Table3---------------------------------------
\begin{table}[H]
	\centering
	\caption{Hub-adjusted directional connectivity matrix \(Q\) among the
		eight divisions.}
	\label{tabq}
	
	\renewcommand{\arraystretch}{1.20}
	\setlength{\tabcolsep}{3pt}
	
	\resizebox{\textwidth}{!}{%
		\begin{tabular}{lcccccccc}
			\toprule
			
			\rowcolor{TableBlue}
			\textcolor{white}{\textbf{Receiving division}}
			&
			\multicolumn{8}{c}{
				\cellcolor{TableBlue}
				\textcolor{white}{\textbf{Source division}}
			}\\
			
			\rowcolor{HeaderFill}
			{}
			& \textcolor{TableBlue}{\textbf{Dhaka}}
			& \textcolor{TableBlue}{\textbf{Chattogram}}
			& \textcolor{TableBlue}{\textbf{Rajshahi}}
			& \textcolor{TableBlue}{\textbf{Barishal}}
			& \textcolor{TableBlue}{\textbf{Sylhet}}
			& \textcolor{TableBlue}{\textbf{Khulna}}
			& \textcolor{TableBlue}{\textbf{Mymensingh}}
			& \textcolor{TableBlue}{\textbf{Rangpur}}\\
			
			\midrule
			
			\textbf{Dhaka}
			& 0.000000 & \textbf{0.800000} & 0.529416 & 0.556901
			& 0.399344 & 0.681769 & 0.768550 & 0.392226\\
			
			\textbf{Chattogram}
			& 0.717949 & 0.000000 & 0.187732 & 0.326825
			& 0.194332 & 0.289736 & 0.192512 & 0.151647\\
			
			\textbf{Rajshahi}
			& 0.247061 & 0.097620 & 0.000000 & 0.060037
			& 0.051104 & 0.109706 & 0.095666 & 0.129108\\
			
			\textbf{Barishal}
			& 0.259887 & 0.169949 & 0.060037 & 0.000000
			& 0.039729 & 0.168664 & 0.052995 & 0.040509\\
			
			\textbf{Sylhet}
			& 0.186361 & 0.101053 & 0.051104 & 0.039729
			& 0.000000 & 0.048220 & 0.090727 & 0.057482\\
			
			\textbf{Khulna}
			& 0.318159 & 0.150663 & 0.109706 & 0.168664
			& 0.048220 & 0.000000 & 0.072607 & 0.061558\\
			
			\textbf{Mymensingh}
			& 0.358656 & 0.100106 & 0.095666 & 0.052995
			& 0.090727 & 0.072607 & 0.000000 & 0.104905\\
			
			\textbf{Rangpur}
			& 0.183039 & 0.078856 & 0.129108 & 0.040509
			& 0.057482 & 0.061558 & 0.104905 & 0.000000\\
			
			\bottomrule
		\end{tabular}%
	}
	
	\vspace{2mm}
	
	\parbox[t]{\textwidth}{%
		\footnotesize
		\textbf{\textit{Note~\ref{tabq}:}} Rows represent receiving divisions and columns represent source divisions. Thus, \(Q(i,j)\) denotes relative connectivity from source division \(j\) to receiving division \(i\). The zero diagonal excludes within-division connectivity from the interdivisional matrix. Because \(Q\) is directional, \(Q(i,j)\) is not necessarily equal to \(Q(j,i)\). The bold value identifies the strongest directional connection.
	}
\end{table}
%---------------------Table4---------------------------------------
\begin{table}[H]
	\centering
	\caption{Reproduction-number-adjusted cross-patch infection influence
		matrix \(\rho\) among the eight divisions.}
	\label{tabr}
	
	\renewcommand{\arraystretch}{1.20}
	\setlength{\tabcolsep}{3pt}
	
	\resizebox{\textwidth}{!}{%
		\begin{tabular}{lcccccccc}
			\toprule
			
			\rowcolor{TableBlue}
			\textcolor{white}{\textbf{Receiving division}}
			&
			\multicolumn{8}{c}{
				\cellcolor{TableBlue}
				\textcolor{white}{\textbf{Source division}}
			}\\
			
			\rowcolor{HeaderFill}
			{}
			& \textcolor{TableBlue}{\textbf{Dhaka}}
			& \textcolor{TableBlue}{\textbf{Chattogram}}
			& \textcolor{TableBlue}{\textbf{Rajshahi}}
			& \textcolor{TableBlue}{\textbf{Barishal}}
			& \textcolor{TableBlue}{\textbf{Sylhet}}
			& \textcolor{TableBlue}{\textbf{Khulna}}
			& \textcolor{TableBlue}{\textbf{Mymensingh}}
			& \textcolor{TableBlue}{\textbf{Rangpur}}\\
			
			\midrule
			
			\textbf{Dhaka}
			& 0.00000000 & \textbf{0.06220940} & 0.02461926
			& 0.01762296 & 0.01175333 & 0.01427943
			& 0.02080607 & 0.01288124\\
			
			\textbf{Chattogram}
			& 0.05743590 & 0.00000000 & 0.00873001
			& 0.01034228 & 0.00571950 & 0.00606842
			& 0.00521166 & 0.00498029\\
			
			\textbf{Rajshahi}
			& 0.01976488 & 0.00759113 & 0.00000000
			& 0.00189985 & 0.00150408 & 0.00229776
			& 0.00258987 & 0.00424009\\
			
			\textbf{Barishal}
			& 0.02079096 & 0.01321553 & 0.00279187
			& 0.00000000 & 0.00116928 & 0.00353261
			& 0.00143468 & 0.00133039\\
			
			\textbf{Sylhet}
			& 0.01490885 & 0.00785803 & 0.00237648
			& 0.00125721 & 0.00000000 & 0.00100996
			& 0.00245614 & 0.00188778\\
			
			\textbf{Khulna}
			& 0.02545272 & 0.01171579 & 0.00510163
			& 0.00533732 & 0.00141920 & 0.00000000
			& 0.00196562 & 0.00202166\\
			
			\textbf{Mymensingh}
			& 0.02869252 & 0.00778443 & 0.00444874
			& 0.00167702 & 0.00267023 & 0.00152074
			& 0.00000000 & 0.00344523\\
			
			\textbf{Rangpur}
			& 0.01464312 & 0.00613201 & 0.00600388
			& 0.00128191 & 0.00169178 & 0.00128932
			& 0.00283998 & 0.00000000\\
			
			\bottomrule
		\end{tabular}%
	}
	
	\vspace{2mm}
	
	\parbox[t]{\textwidth}{%
		\footnotesize
		\textbf{\textit{Note~\ref{tabr}:}} Rows represent receiving divisions and columns represent source divisions. Thus, \(\rho_{ij}\) denotes the additional infection pressure exerted by infectious individuals in source division \(j\) on receiving division \(i\). The zero diagonal excludes within-division transmission, which is represented separately by the local force of infection. Because \(\rho\) is directional, \(\rho_{ij}\) is not necessarily equal to \(\rho_{ji}\). The bold value identifies the strongest cross-patch infection influence.
	}
\end{table}
%---------------------Table5---------------------------------------
	\begin{table}[H]
		\centering
		\caption{Division-to-division cross-patch infection contributions at the baseline total epidemic peak.}
		\label{tab5}
		
		\renewcommand{\arraystretch}{1.20}
		\setlength{\tabcolsep}{3pt}
		
		\resizebox{\textwidth}{!}{%
			\begin{tabular}{lcccccccc}
				\toprule
				
				\rowcolor{TableBlue}
				\textcolor{white}{\textbf{Receiving division}}
				&
				\multicolumn{8}{c}{
					\cellcolor{TableBlue}
					\textcolor{white}{\textbf{Source division}}
				}\\
				
				\rowcolor{HeaderFill}
				{}
				& \textcolor{TableBlue}{\textbf{Dhaka}}
				& \textcolor{TableBlue}{\textbf{Chattogram}}
				& \textcolor{TableBlue}{\textbf{Rajshahi}}
				& \textcolor{TableBlue}{\textbf{Barishal}}
				& \textcolor{TableBlue}{\textbf{Sylhet}}
				& \textcolor{TableBlue}{\textbf{Khulna}}
				& \textcolor{TableBlue}{\textbf{Mymensingh}}
				& \textcolor{TableBlue}{\textbf{Rangpur}}\\
				
				\midrule
				
				\textbf{Dhaka}
				& 0.000000 & 0.234198 & 0.006974 & 0.014009
				& 0.016064 & 0.011441 & 0.026717 & 0.011481\\
				
				\textbf{Chattogram}
				& \textbf{0.400039} & 0.000000 & 0.004622 & 0.015366
				& 0.014611 & 0.009088 & 0.012509 & 0.008297\\
				
				\textbf{Rajshahi}
				& 0.110796 & 0.042992 & 0.000000 & 0.002272
				& 0.003093 & 0.002770 & 0.005003 & 0.005685\\
				
				\textbf{Barishal}
				& 0.120154 & 0.077160 & 0.001227 & 0.000000
				& 0.002479 & 0.004390 & 0.002857 & 0.001839\\
				
				\textbf{Sylhet}
				& 0.082661 & 0.044016 & 0.001002 & 0.001487
				& 0.000000 & 0.001204 & 0.004693 & 0.002504\\
				
				\textbf{Khulna}
				& 0.100368 & 0.046674 & 0.001529 & 0.004490
				& 0.002053 & 0.000000 & 0.002671 & 0.001907\\
				
				\textbf{Mymensingh}
				& 0.157240 & 0.043099 & 0.001853 & 0.001961
				& 0.005367 & 0.001792 & 0.000000 & 0.004516\\
				
				\textbf{Rangpur}
				& 0.097459 & 0.041232 & 0.003038 & 0.001820
				& 0.004130 & 0.001845 & 0.006514 & 0.000000\\
				
				\bottomrule
			\end{tabular}%
		}
		
		\vspace{2mm}
		
		\parbox[t]{\textwidth}{%
			\footnotesize
			\textbf{\textit{Note~\ref{tab5}:}} Rows represent receiving divisions and columns represent source divisions. Each entry gives the model-estimated number of new
			infections per day generated in receiving division \(i\) by cross-patch infection pressure from source division \(j\) at the baseline total epidemic peak. The zero diagonal excludes local transmission, which is calculated separately within each division. Summing the entries across a row gives the total cross-patch new-infection rate received by that division, whereas summing down a column gives the total cross-patch
			new-infection rate generated by the corresponding source division. The bold value identifies the largest pairwise contribution.
		}
	\end{table}
	%---------------------Table6---------------------------------------
	
	%---------------------Table7---------------------------------------
	\begin{table}[H]
		\centering
		\caption{Percentage changes in epidemic outcomes relative to the baseline coupling scenario.}
		\label{tab7}
		
		\renewcommand{\arraystretch}{1.12}
		\setlength{\tabcolsep}{6pt}
		
		\resizebox{\textwidth}{!}{%
			\begin{tabular}{lcccc}
				\toprule
				
				\rowcolor{TableBlue}
				\textcolor{white}{\textbf{Scenario}}
				&
				\multicolumn{4}{c}{
					\cellcolor{TableBlue}
					\textcolor{white}{
						\textbf{Percentage change relative to baseline coupling}
					}
				}\\
				
				\rowcolor{HeaderFill}
				{}
				&
				\makecell[c]{
					\textcolor{TableBlue}{\textbf{Peak infectious}}\\
					\textcolor{TableBlue}{\textbf{population}}\\
					\textcolor{TableBlue}{\textbf{(\%)}}
				}
				&
				\makecell[c]{
					\textcolor{TableBlue}{\textbf{Time to}}\\
					\textcolor{TableBlue}{\textbf{peak}}\\
					\textcolor{TableBlue}{\textbf{(\%)}}
				}
				&
				\makecell[c]{
					\textcolor{TableBlue}{\textbf{Final infectious}}\\
					\textcolor{TableBlue}{\textbf{population}}\\
					\textcolor{TableBlue}{\textbf{(\%)}}
				}
				&
				\makecell[c]{
					\textcolor{TableBlue}{\textbf{Cumulative}}\\
					\textcolor{TableBlue}{\textbf{infections}}\\
					\textcolor{TableBlue}{\textbf{(\%)}}
				}\\
				
				\midrule
				
				No spatial coupling
				& \(-39.0301\) & \(-72.4638\)
				& \(-55.1292\) & \(-53.6777\)\\
				
				\rowcolor{HeaderFill!70}
				\textbf{Baseline coupling}
				& 0.0000 & 0.0000 & 0.0000 & 0.0000\\
				
				50\% restriction
				& \(-26.6854\) & \(-68.1159\)
				& \(-21.3661\) & \(-25.0732\)\\
				
				80\% restriction
				& \(-34.3682\) & \(-71.0145\)
				& \(-38.8732\) & \(-41.4763\)\\
				
				50\% increase
				& \(+30.1366\) & \(-5.7971\)
				& \(+15.4439\) & \(+23.5361\)\\
				
				\bottomrule
			\end{tabular}%
		}
		
		\vspace{2mm}
		
		\parbox[t]{\textwidth}{%
			\footnotesize
			\textbf{\textit{Note~\ref{tab7}:}} Percentage changes are calculated relative to the
			baseline-coupling scenario as
			\(
			100\,(X_{\mathrm{scenario}}-X_{\mathrm{baseline}})
			/X_{\mathrm{baseline}}
			\).
			Negative values indicate reductions relative to baseline, whereas positive values indicate increases. For time to peak, a negative value indicates that the epidemic peak occurs earlier than under baseline coupling. The no-spatial-coupling scenario excludes both interdivisional movement and cross-patch infection.
		}
	\end{table}
	%---------------------Table8---------------------------------------
	
	%---------------------Table9---------------------------------------
\begin{table}[H]
	\centering
	\caption{Division-specific compartment movement balance and movement-related exposure at the baseline network-wide epidemic peak.}
	\label{tab9}
	
	\renewcommand{\arraystretch}{1.12}
	\setlength{\tabcolsep}{5pt}
	
	\resizebox{\textwidth}{!}{%
		\begin{tabular}{lccccc}
			\toprule
			
			\rowcolor{TableBlue}
			\textcolor{white}{\textbf{Division}}
			&
			\multicolumn{4}{c}{
				\cellcolor{TableBlue}
				\textcolor{white}{\textbf{Net movement and exposure}}
			}
			&
			\cellcolor{TableBlue}
			\textcolor{white}{\textbf{Infectious-movement status}}
			\\
			
			\rowcolor{HeaderFill}
			{}
			&
			\makecell[c]{
				\textcolor{TableBlue}{\textbf{Net susceptible}}\\
				\textcolor{TableBlue}{\textbf{movement}}\\
				\textcolor{TableBlue}{\(\boldsymbol{\mathrm{Net}S_i}\)}\\
				\textcolor{TableBlue}{\textbf{(individuals}}\\
				\textcolor{TableBlue}{\textbf{day\(^{-1}\))}}
			}
			&
			\makecell[c]{
				\textcolor{TableBlue}{\textbf{Net vaccinated}}\\
				\textcolor{TableBlue}{\textbf{movement}}\\
				\textcolor{TableBlue}{\(\boldsymbol{\mathrm{Net}V_i}\)}\\
				\textcolor{TableBlue}{\textbf{(individuals}}\\
				\textcolor{TableBlue}{\textbf{day\(^{-1}\))}}
			}
			&
			\makecell[c]{
				\textcolor{TableBlue}{\textbf{Movement-related}}\\
				\textcolor{TableBlue}{\textbf{exposure tendency}}\\
				\textcolor{TableBlue}{\(\boldsymbol{E_{\mathrm{move},i}}\)}\\
				\textcolor{TableBlue}{\textbf{(individuals}}\\
				\textcolor{TableBlue}{\textbf{day\(^{-2}\))}}
			}
			&
			\makecell[c]{
				\textcolor{TableBlue}{\textbf{Net infectious}}\\
				\textcolor{TableBlue}{\textbf{movement}}\\
				\textcolor{TableBlue}{\(\boldsymbol{\mathrm{Net}I_i}\)}\\
				\textcolor{TableBlue}{\textbf{(individuals}}\\
				\textcolor{TableBlue}{\textbf{day\(^{-1}\))}}
			}
			&
			{}
			\\
			
			\midrule
			
			\textbf{Dhaka}
			& \textbf{0.014957}
			& \textbf{3.205284}
			& \textbf{0.01305315}
			& \(-0.047152\)
			& \textcolor{purple}{\textbf{Exporter}}\\
			
			\textbf{Chattogram}
			& 0.014365
			& 1.484500
			& 0.01294109
			& \(-0.034384\)
			& \textcolor{purple}{\textbf{Exporter}}\\
			
			\textbf{Rajshahi}
			& \(-0.002124\)
			& \(-0.232086\)
			& 0.00000000
			& \textbf{0.027207}
			& \textcolor{green!50!black}{\textbf{Importer}}\\
			
			\textbf{Barishal}
			& 0.001977
			& \(-1.347536\)
			& 0.00000424
			& 0.018020
			& \textcolor{green!50!black}{\textbf{Importer}}\\
			
			\textbf{Sylhet}
			& \(-0.045540\)
			& \(-0.636919\)
			& 0.00000000
			& 0.002392
			& \textcolor{green!50!black}{\textbf{Importer}}\\
			
			\textbf{Khulna}
			& 0.004670
			& \(-1.129316\)
			& 0.00001703
			& 0.019101
			& \textcolor{green!50!black}{\textbf{Importer}}\\
			
			\textbf{Mymensingh}
			& 0.007695
			& \(-0.513334\)
			& 0.00004049
			& 0.005604
			& \textcolor{green!50!black}{\textbf{Importer}}\\
			
			\textbf{Rangpur}
			& 0.004000
			& \(-0.830593\)
			& 0.00001043
			& 0.009212
			& \textcolor{green!50!black}{\textbf{Importer}}\\
			
			\bottomrule
		\end{tabular}%
	}
	
	\vspace{2mm}
	
	\parbox[t]{\textwidth}{%
		\footnotesize
		\raggedright
		\textbf{\textit{Note~\ref{tab9}:}} Net movement is defined as inflow minus outflow;
		hence, positive values indicate net import and negative values indicate net export. The exporter/importer classification is based on \(\mathrm{Net}I_i\). The movement-related exposure tendency is
		\(E_{\mathrm{move},i}
		=\lambda_i\max(\mathrm{Net}S_i,0)
		+(1-\epsilon_i)\lambda_i\max(\mathrm{Net}V_i,0)\).
		It measures the instantaneous change in infection-rate potential associated with positive susceptible and vaccinated inflows, rather than the number of infections accumulated over a day. Zero values may reflect numerical rounding. Bold type identifies the largest value in each numerical column.
	}
\end{table}
	%---------------------Table10---------------------------------------
	\begin{table}[H]
		\centering
		\caption{Division-specific cross-patch source--sink classification at the
			baseline total epidemic peak.}
		\label{tab10}
		
		\renewcommand{\arraystretch}{1.12}
		\setlength{\tabcolsep}{6pt}
		
		\resizebox{\textwidth}{!}{%
			\begin{tabular}{lcccc}
				\toprule
				
				\rowcolor{TableBlue}
				\textcolor{white}{\textbf{Division}}
				&
				\multicolumn{3}{c}{
					\cellcolor{TableBlue}
					\textcolor{white}{\textbf{Cross-patch infection balance}}
				}
				&
				\cellcolor{TableBlue}
				\textcolor{white}{\textbf{Spatial role}}\\
				
				\rowcolor{HeaderFill}
				{}
				&
				\makecell[c]{
					\textcolor{TableBlue}{\textbf{Imported}}\\
					\textcolor{TableBlue}{\textbf{new-infection rate}}\\
					\textcolor{TableBlue}{\textbf{(individuals day\(^{-1}\))}}
				}
				&
			\makecell[c]{
				\textcolor{TableBlue}{\textbf{Exported}}\\
				\textcolor{TableBlue}{\textbf{new-infection rate}}\\
				\textcolor{TableBlue}{\textbf{(individuals day\(^{-1}\))}}
			}
				&
				\makecell[c]{
					\textcolor{TableBlue}{\textbf{Net source}}\\
					\textcolor{TableBlue}{\textbf{rate}}\\
					\textcolor{TableBlue}{\textbf{(individuals day\(^{-1}\))}}
				}
				&
				{}\\
				
				\midrule
				
				\textbf{Dhaka}
				& 0.320884 & \textbf{1.068717} & \textbf{0.747833}
				& \textcolor{red!75!black}{\textbf{Cross-patch source}}\\
				
				\textbf{Chattogram}
				& \textbf{0.464533} & 0.529372 & 0.064839
				& \textcolor{red!75!black}{\textbf{Cross-patch source}}\\
				
				\textbf{Rajshahi}
				& 0.172610 & 0.020246 & \(-0.152365\)
				& \textcolor{blue!70!black}{\textbf{Cross-patch sink}}\\
				
				\textbf{Barishal}
				& 0.210105 & 0.041404 & \(-0.168701\)
				& \textcolor{blue!70!black}{\textbf{Cross-patch sink}}\\
				
				\textbf{Sylhet}
				& 0.137566 & 0.047797 & \(-0.089769\)
				& \textcolor{blue!70!black}{\textbf{Cross-patch sink}}\\
				
				\textbf{Khulna}
				& 0.159691 & 0.032529 & \(-0.127162\)
				& \textcolor{blue!70!black}{\textbf{Cross-patch sink}}\\
				
				\textbf{Mymensingh}
				& 0.215828 & 0.060963 & \(-0.154865\)
				& \textcolor{blue!70!black}{\textbf{Cross-patch sink}}\\
				
				\textbf{Rangpur}
				& 0.156038 & 0.036229 & \(-0.119810\)
				& \textcolor{blue!70!black}{\textbf{Cross-patch sink}}\\
				
				\bottomrule
			\end{tabular}%
		}
		
		\vspace{2mm}
		
		\parbox[t]{\textwidth}{%
			\footnotesize
			\textbf{\textit{Note~\ref{tab10}:}} Imported values represent the instantaneous model-estimated rate of new infections generated within a division by cross-patch infection pressure from all other divisions. Exported values represent the corresponding rate of new infections generated by that division in the remaining divisions. The net source rate is calculated as the exported rate minus the imported rate; a positive value identifies a cross-patch source, whereas a negative value identifies a cross-patch sink. All quantities are evaluated at the baseline network-wide epidemic peak and are expressed as newly infected individuals per day. Bold numerical values indicate the largest value in the corresponding column.
		}
	\end{table}
\end{document}